\title{The existence and uniqueness of a power price equilibrium} 
\author{Miha Troha\thanks{Mathematical Institute, Oxford University, Andrew Wiles Building, Radcliffe Observatory Quarter, Woodstock Road, Oxford OX2 6GG, United Kingdom, \email{troha@maths.ox.ac.uk}. This author was supported through grants from the Slovene human resources development and scholarship fund, and the Oxford-Man Institute.} \and Raphael Hauser\thanks{Mathematical Institute, Oxford University, Andrew Wiles Building, Radcliffe Observatory Quarter, Woodstock Road, Oxford OX2 6GG, United Kingdom, \email{hauser@maths.ox.ac.uk}. Associate Professor in Numerical Mathematics, and Tanaka Fellow in Applied Mathematics at Pembroke College, Oxford. This author was supported through grant EP/H02686X/1 from the Engineering and Physical Sciences Research Council of the UK.}}
\tikzset{decorate sep/.style 2 args=
{decorate,decoration={shape backgrounds,shape=circle,shape size=#1,shape sep=#2}}}
\begin{document}
\maketitle
\begin{abstract}
We propose a term structure power price model that, in contrast to
widely accepted no-arbitrage based approaches, accounts for the non-storable
nature of power. It belongs to a class of equilibrium game theoretic
models with players divided into producers and consumers. The consumers'
goal is to maximize a mean-variance utility function subject to satisfying
an inelastic demand of their own clients (e.g households, businesses
etc.) to whom they sell the power. The producers, who own a portfolio
of power plants each defined by a running fuel (e.g. gas, coal, oil...)
and physical characteristics (e.g. efficiency, capacity, ramp up/down
times...), similarly, seek to maximize a mean-variance utility function
consisting of power, fuel, and emission prices subject to production
constraints. Our goal is to determine the term structure of the power
price at which production matches consumption. In this paper we show
that in such a setting the equilibrium price exists and also discuss
the conditions for its uniqueness.
\end{abstract}
\begin{keywords}{\footnotesize term structure, electricity, game
theory, mean-variance, optimization, KKT conditions.}\end{keywords}

\section{Introduction}

The electricity price has some unique features which are very distinct
from features of stock prices or other commodities. The literature
aiming at modeling the electricity price can be broadly divided into
non-structural, structural, and game theoretic approaches.

Non-structural approaches model electricity price directly, without
considering the underlying reasons that cause the price to change
over time. A good overview of regular patterns and statistical properties
of electricity prices, together with one and multi-factor Ornstein-Uhlenbeck
process models of the spot price, is given in \cite{lucia2000electricity}.
Since Gaussian distributions are not suitable for modeling spikes,
a combination of Ornstein-Uhlenbeck and pure jump-processes have been
proposed in \cite{hambly2009modelling}. Further developments include
using also Levy processes (see \cite{meyer-brandis2008multifactor}
for example). Even though such models are good for modeling the spot
price, their application for pricing derivatives is sometimes rather
questionable. As pointed out in \cite{benth2009theinformation}, a
classical buy-and-hold strategy is not applicable to non-storable
commodities. All non-structural models define the price of a forward
contract as the expected price of the security at the delivery under
the risk-neutral measure, conditioned on some filtration, which contains
all available market information. It is often assumed that all available
market information is included in the spot price (i.e. the spot price
is a martingale). This is a good approximation for stock prices, but
definitely false for non-storable commodities. Information about a
huge electricity demand increase in a week time, for example, does
not have any impact on today's spot price. The correct filtration
must hence include information on weather forecasts, planned power
plant outages etc. It can be argued that all the information is contained
in prices of forward contracts and other derivatives. A one-factor
model in \cite{clewlow1999valuing} and multi-factor term-structure
model in \cite{clewlow1999amultifactor} are the first that produce
prices that are consistent with observable forward prices.

Another interesting aspect of electricity is its relation to other
fuels. Noting that coal, gas, oil, etc. can be used to produce electricity,
electricity itself can be treated as a derivative with various fuel
and emission prices as the underlying. \cite{barlow2002adiffusion}
produced seminal work that used a supply and demand stack to model
electricity prices. The supply stack was further extended by \cite{howison2009stochastic}
and by \cite{carmona2013electricity} to include various fuels. Using
an exponential bid stack, they calculated spot electricity price as
a function of demand and spot fuel prices. These models belong to
a class of structural approaches since they try to capture some of
the physical properties of the electricity markets.

The third, game theoretic approach, models the physical properties
and decisions of market participants more closely. As pointed out
in \cite{robinson2005mathmodel}, models that include ramp up/down
constraints of power plants and study the impact of long term contracts
on the spot electricity prices, are needed in order to prevent and
explain disastrous events an example of which happened in California
in 2001 and cost the state as much as \$45 billion. The seminal work
of game theoretic models in electricity markets was produced by \cite{bessembinder2002equilibrium}:
a unique relation between a forward and a spot price is given in a
two-stage market with one producer and one consumer, who each want
to maximize their mean-variance objective function. This work has
been used to study benefits of derivatives in the electricity market,
see \cite{cavallo2005electricity}. It was extended to a multi-stage
setting with a dynamic equilibrium by \cite{buhler2009valuation},
and \cite{buhler2009riskpremia}. \cite{demaeredaertrycke2012liquidity}
have extended the two-stage mean-variance model to any convex risk
measure, while also taking into account liquidity constraints. 

The game theoretic approach attempts to model decisions of producers
and consumers that participate in the electricity market explicitly.
Since a widely used strategy for risk management is delta hedging,
it is important to know that the delta hedging strategy can be implemented
as a minimum variance strategy in the mean-variance portfolio framework
(see \cite{alexander2006hedging} for details).

Our work belongs to a class of game theoretic approaches. It extends
the model of \cite{buhler2009riskpremia} to more than one producer
and consumer, who maximize their mean-variance utility functions.
In contrast to other game theoretic models, we include capacity and
ramp up/down constraints of power plants. Following ideas from structural
approaches, the profit of power plants is modeled as a difference
between electricity price and fuel costs (including emissions). Since
we model each power plant directly, we do not have to make any assumptions
on the bid stack, which is in our case determined by the physical
properties of power plants. As in \cite{clewlow1999amultifactor},
our model is consistent with observable fuel and emission prices.
We do not focus on specific emission market schemes, but rather use
a simplified version of it. For a detailed treatment of emission market
in the game theoretic setting see \cite{carmona2010marketdesign}
and \cite{ludkovski2011stochastic}.

The literature distinguishes between dynamic (i.e. stochastic) and
static models. In dynamic models, players adapt to changing environment
(i.e. fuel prices, demand etc.) by adapting their decisions. In each
stage of their decision making they determine the optimal decisions
they have to take now and also the optimal decisions they will take
in the future under all possible changes in the environment. Since
future decisions affect present decisions this is computationally
very demanding. In static models on the other hand, players assume
that they know the future state of the environment exactly and can
thus stick to an initial plan about future decisions, regardless of
the changes in the environment. Such approaches are computationally
much more tractable, but they do not reflect the reality very well.
The model we propose here can be seen as hybrid of both approaches.
The initial optimization problem is static, where players determine
all their optimal decisions and assume they will not alter them in
the future. However, as the environment changes, players may take
recursive actions by calculating new optimal decisions while taking
into account the new state of the environment, as well as their decisions
taken under the previous state of the environment. Since in such setting
the recursive actions are not very dependent, we will mainly focus
on each of them separately.

This paper is organized as follows: In Section \ref{sec:Problem-description}
we give a detailed mathematical description of the model, and in Section
\ref{sec:Analysis} we proof that the solution to our model exists
and develop the conditions under which it is also unique. We conclude
the paper in Section \ref{sec:Conclusions}.

\section{Problem description\label{sec:Problem-description}}

The model we propose can be used to determine the term structure of
electricity prices. The electricity market is defined by a set of
producers $P$ of cardinality $0<\left|P\right|<\infty$, a set of
consumers $C$ of cardinality $0<\left|C\right|<\infty$, and a hypothetical
market agent. Each of the producers and consumers participates in
the electricity market in order to maximize their profit subject to
a risk budget under a mean-variance optimization framework. Producers
own a number of power plants, which can have different physical characteristics
and run on different fuels. The set of all fuels is denoted by $L$.
Sets $R^{p,l}$ represent all power plants owned by producer $p\in P$
that run on fuel $l\in L$.

We are interested in delivery times $T_{j}$, $j\in J=\left\{ 1,...,T'\right\} $,
where power for each delivery time $T_{j}$ can be traded through
numerous forward contracts at times $t_{i}$, $i\in I_{j}$. The electricity
price at time $t_{i}$ for delivery at time $T_{j}$ is denoted by
$\Pi\left(t_{i},T_{j}\right)$. Since contracts with trading time
later than delivery time do not exist, we require $t_{\max\left\{ I_{j}\right\} }=T_{j}$
for all $j\in J$. The number of all forward contracts, i.e. $\sum_{j\in J}\left|I_{j}\right|$,
is denoted by $N$. Uncertainty is modeled by a filtered probability
space $\left(\Omega,\mathcal{F},\mathbb{F}=\left\{ \mathcal{F}_{t},t\in I\right\} ,\mathbb{P}\right)$,
where $I=\cup_{j\in J}I_{j}$. The $\sigma$-algebra $\mathcal{F}_{t}$
represents information available at time $t$.

The exogenous variables that appear in our model are (a) aggregate
power demand $D\left(T_{j}\right)$ for each delivery period $j\in J$,
(b) prices of fuel forward contracts $G_{l}\left(t_{i},T_{j}\right)$
for each fuel $l\in L$, delivery period $j\in J$, and trading period
$i\in I_{j}$, and (c) prices of emissions forward contracts $G_{em}\left(t_{i},T_{j}\right)$,
$j\in J$, $i\in I_{j}$. Electricity prices and all exogenous variables
are assumed to be adapted to the filtration $\left\{ \mathcal{F}_{t}\right\} _{t\in I}$
and have finite second moments.

Let $v_{k}\in\mathbb{R}^{n_{k}}$, $n_{k}\in\mathbb{N}$, $k\in K$,
and $K=\left\{ 1,...,\left|K\right|\right\} $ be given vectors. For
convenience, we define a vector concatenation operator as
\[
\left|\right|_{k\in K}v_{k}=\left[v_{1}^{\top},...,v_{\left|K\right|}^{\top}\right]^{\top}.
\]

\subsection{Producer}

Each producer $p\in P$ has to decide on the number $V_{p}\left(t_{i},T_{j}\right)$
of electricity forward contracts and the number $F_{p,l}\left(t_{i},T_{j}\right)$,
$l\in L$ of fuel forward contracts to buy, at trading time $t_{i}$,
$i\in I_{j}$ for delivery at time $T_{j}$, $j\in J$. By $W_{p,l,r}\left(T_{j}\right)$
we denote the actual production from fuel $l\in L$ at time $T_{j}$
from power plant $r\in R^{p,l}$, and by $O_{p}\left(t_{i},T_{j}\right)$
we denote the number of emission forward contracts purchased at time
$t_{i}$ for delivery at time $T_{j}$.

Notation is greatly simplified if decision variables are concatenated
into 
\begin{itemize}
\item electricity trading vectors $V_{p}\left(T_{j}\right)=\left|\right|_{i\in I_{j}}V_{p}\left(t_{i},T_{j}\right)$
and $V_{p}=\left|\right|_{j\in J}V_{p}\left(T_{j}\right)$, 
\item fuel trading vectors $F_{p}\left(t_{i},T_{j}\right)=\left|\right|_{l\in L}F_{p,l}\left(t_{i},T_{j}\right)$,
$F_{p}\left(T_{j}\right)=\left|\right|_{i\in I_{j}}F_{p}\left(t_{i},T_{j}\right)$,
and\linebreak{}
 $F_{p}=\left|\right|_{j\in J}F_{p}\left(T_{j}\right)$, 
\item emission trading vectors $O_{p}\left(T_{j}\right)=\left|\right|_{i\in I_{j}}O_{p}\left(t_{i},T_{j}\right)$
and $O_{p}=\left|\right|_{j\in J}O_{p}\left(T_{j}\right)$, 
\item electricity production vectors $W_{p,l}\left(T_{j}\right)=\left|\right|_{r\in R^{p,l}}W_{p,l,r}\left(T_{j}\right)$,
$W_{p}\left(T_{j}\right)=\left|\right|_{l\in L}W_{p,l}\left(T_{j}\right)$,
and $W_{p}=\left|\right|_{j\in J}W_{p}\left(T_{j}\right)$, 
\end{itemize}
and finally $v_{p}=\left[V_{p}^{\top},F_{p}^{\top},O_{p}^{\top},W_{p}^{\top}\right]^{\top}$.

Similarly, we define 
\begin{itemize}
\item electricity price vectors $\Pi\left(T_{j}\right)=\left|\right|_{i\in I_{j}}\Pi\left(t_{i},T_{j}\right)$,
and $\Pi=\left|\right|_{j\in J}e^{-\hat{r}T_{j}}\Pi\left(T_{j}\right)$,
where $\hat{r}\in\mathbb{R}$ is a constant interest rate, 
\item fuel price vectors $G\left(t_{i},T_{j}\right)=\left|\right|_{l\in L}G_{l}\left(t_{i},T_{j}\right)$,
$G\left(T_{j}\right)=\left|\right|_{i\in I_{j}}G\left(t_{i},T_{j}\right)$,
and\linebreak{}
$G=\left|\right|_{j\in J}e^{-\hat{r}T_{j}}G\left(T_{j}\right)$, 
\item emission price vector $G_{em}\left(T_{j}\right)=\left|\right|_{i\in I_{j}}G_{em}\left(t_{i},T_{j}\right)$,
and $G_{em}=\left|\right|_{j\in J}e^{-\hat{r}T_{j}}G_{em}\left(T_{j}\right)$, 
\end{itemize}
and finally $\pi_{p}=\left[\Pi^{\top},G^{\top},G_{em}^{\top},\underset{\dim\left(W_{p}\right)}{\underbrace{0,...,0}}\right]^{\top}$,
where number of zeros matches the dimension of vector $W_{p}$. 

The profit $P_{p}\left(v_{p},\pi_{p}\right)$ of producer $p\in P$
can be calculated as
\begin{equation}
P_{p}\left(v_{p},\pi_{p}\right)=\sum_{j\in J}e^{-\hat{r}T_{j}}\left(\sum_{i\in I_{j}}P_{p}^{t_{i},T_{j}}\left(v_{p},\pi_{p}\right)\right)
\end{equation}
 where the profit $P_{p}^{t_{i},T_{j}}\left(v_{p},\pi_{p}\right)$
for each $i\in I_{j}$ and $j\in J$ can be calculated as

\[
P_{p}^{t_{i},T_{j}}\left(v_{p},\pi_{p}\right)=-\Pi\left(t_{i},T_{j}\right)V_{p}\left(t_{i},T_{j}\right)-O_{p}\left(t_{i},T_{j}\right)G_{em}\left(t_{i},T_{j}\right)-\sum_{l\in L}G_{l}\left(t_{i},T_{j}\right)F_{p,l}\left(t_{i},T_{j}\right).
\]
Ramping up and down constraints for each $j\in\left\{ 1,...,T'-1\right\} $,
where $T'$ denotes the last delivery period, $l\in L$ and $r\in R^{p,l}$
can be expressed as 
\begin{equation}
\triangle\overline{W}_{min}^{p,l,r}\leq W_{p,l,r}\left(T_{j+1}\right)-W_{p,l,r}\left(T_{j}\right)\leq\triangle\overline{W}_{max}^{p,l,r}.\label{eq:pb0}
\end{equation}
For each power plant $r\in R^{p,l}$, $\triangle\overline{W}_{max}^{p,l,r}$
and $\triangle\overline{W}_{min}^{p,l,r}$ represent maximum rates
for ramping up and down, respectively. Similarly, $\overline{W}_{max}^{p,l,r}$
denotes the maximum production and thus we can write the capacity
constraints for each power plant $r\in R^{p,l}$ as
\begin{equation}
0\leq W_{p,l,r}\left(T_{j}\right)\leq\overline{W}_{max}^{p,l,r}.\label{eq:pb1}
\end{equation}
Additionally, for each $j\in J$ the electricity sold in the forward
market must equal the actually produced electricity, i.e.
\begin{equation}
-\sum_{i\in I_{j}}V_{p}\left(t_{i},T_{j}\right)=\sum_{l\in L}\sum_{r\in R^{p,l}}W_{p,l,r}\left(T_{j}\right)\label{eq:pb2-1}
\end{equation}
and sufficient amount of fuel $l\in L$ must have been bought for
each $j\in J$, i.e.
\begin{equation}
\sum_{r\in R^{p,l}}W_{p,l,r}\left(T_{j}\right)c^{p,l,r}=\sum_{i\in I_{j}}F_{p,l}\left(t_{i},T_{j}\right)\label{eq:pb3-1}
\end{equation}
where $c^{p,l,r}>0$ is the efficiency of power plant $r$. 

The carbon emission obligation constraint can be written as
\begin{equation}
\sum_{j\in J}\sum_{i\in I_{j}}O\left(t_{i},T_{j}\right)=\sum_{j\in J}\sum_{l\in L}\sum_{r\in R^{p,l}}W_{p,l,r}\left(T_{j}\right)g^{l},\label{eq:pb4}
\end{equation}
where $g^{l}>0$ denotes the carbon emission intensity factor for
fuel $l\in L$. This constraint ensures that enough emission certificates
have been bought to cover the electricity production over the whole
planning horizon. 

We bound the number of fuel forward contracts traded for each $i\in I_{j}$,
$j\in J$, and $l\in L$ as
\begin{equation}
-F_{trade}\leq F_{p,l}\left(t_{i},T_{j}\right)\leq F_{trade}\label{eq:pb3}
\end{equation}
for some large $F_{trade}>0$ and the number of emission forward contracts
traded as 
\begin{equation}
-F_{trade}\leq O\left(t_{i},T_{j}\right)\leq F_{trade}.\label{eq:pbb4}
\end{equation}
Additionally, we also bound the number of electricity forward contracts
traded for each $i\in I_{j}$, and $j\in J$ as 
\begin{equation}
-V_{trade}\leq V_{p}\left(t_{i},T_{j}\right)\leq V_{trade}\label{eq:pb2}
\end{equation}
for some large $V_{trade}>0$. Boundedness of the number of the electricity
forward contracts traded enables us to use some of game theoretic
theory that only applies to compact sets. If $V_{trade}$ is set correctly
(see Corollary \ref{prop:bounded_volumes}), then the optimal $V_{p}\left(t_{i},T_{j}\right)$
can never lie on the boundary and hence the constraint does not have
any impact on the optimal solution.

Producers would like to maximize their profit subject to a risk budget.
Under a mean-variance optimization framework they are interested in
the mean-variance utility

\[
\begin{array}{rcl}
\Psi_{p}\left(v_{p}\right) & = & \mathbb{E}^{\mathbb{P}}\left[P_{p}\left(v_{p},\pi_{p}\right)\right]-\frac{\lambda_{p}}{2}\text{Var}^{\mathbb{P}}\left[P_{p}\left(v_{p},\pi_{p}\right)\right]\\
\\
 & = & -\mathbb{E}^{\mathbb{P}}\left[\pi_{p}\right]^{\top}v_{p}-\frac{1}{2}\lambda_{p}v_{p}^{\top}Q_{p}v_{p},
\end{array}
\]
where $\lambda_{p}>0$ is their risk preference and $Q_{p}:=\mathbb{E}^{\mathbb{P}}\left[\left(\pi_{p}-\mathbb{E}^{\mathbb{P}}\left[\pi_{p}\right]\right)\left(\pi_{p}-\mathbb{E}^{\mathbb{P}}\left[\pi_{p}\right]\right)^{\top}\right]$
an ``extended'' covariance matrix. Their objective is to solve the
following optimization problem
\begin{equation}
\begin{array}{rl}
\Phi_{p}=\underset{v_{p}}{\text{max }} & \Psi_{p}\left(v_{p}\right)\end{array}\label{eq:opt_prod}
\end{equation}
subject to (\ref{eq:pb0}), (\ref{eq:pb1}), (\ref{eq:pb2-1}), (\ref{eq:pb3-1}),
(\ref{eq:pb4}), (\ref{eq:pb3}), (\ref{eq:pbb4}), and (\ref{eq:pb2}).

\subsection{Consumer}

We make the assumption that demand is completely inelastic and that
each consumer $c\in C$ is responsible for satisfying a proportion
$p_{c}\in\left[0,1\right]$ of the total demand $D\left(T_{j}\right)$
at time $T_{j}$, $j\in J$. Since $p_{c}$ is a proportion clearly
$\sum_{c\in C}p_{c}=1.$ 

For further argumentation let us define electricity trading vectors
$V_{c}\left(T_{j}\right)=\left|\right|_{i\in I_{j}}V_{c}\left(t_{i},T_{j}\right)$
and $V_{c}=\left|\right|_{j\in J}V_{c}\left(T_{j}\right)$. 

Consumer's profit can be calculated as
\begin{equation}
P_{c}\left(V_{c},\Pi\right)=\sum_{j\in J}e^{-\hat{r}T_{j}}\left(\sum_{i\in I_{j}}-\Pi\left(t_{i},T_{j}\right)V_{c}\left(t_{i},T_{j}\right)+s_{c}p_{c}D\left(T_{j}\right)\right),\label{eq:23}
\end{equation}
where $\hat{r}\in\mathbb{R}$ denotes a constant interest rate and
$s_{c}\in\mathbb{R}$ denotes a contractually fixed price that consumer
$c\in C$ receives for selling the electricity further to households
or businesses. Demand is expected to be satisfied for each $T_{j}$,
i.e.
\begin{equation}
\sum_{i\in I_{j}}V_{c}\left(t_{i},T_{j}\right)=p_{c}D\left(T_{j}\right).\label{eq:cb2}
\end{equation}
At the time of calculating the optimal decisions, consumers assume
that they know the future realization of demand $D\left(T_{j}\right)$
precisely. If the knowledge about the future realization of the demand
changes, then players can take recursive actions by recalculating
their optimal decisions with the updated demand forecast.

Note further that the contractually fixed price $s_{c}$ only affects
the optimal objective value of consumer $c\in C$, but not also his
optimal solution. Since we are primarily interested in optimal solutions,
we simplify the notation and set $s_{c}=0$. The correct optimal value
can always be calculated via post-processing when an optimal solution
is already known. This is sometimes needed for risk management purposes.

We bound the number of the electricity forward contracts for each
$i\in I_{j}$ and $j\in J$ a consumer is allowed to trade as 
\begin{equation}
-V_{trade}\leq V_{c}\left(t_{i},T_{j}\right)\leq V_{trade}\label{eq:cb1}
\end{equation}
for some large $V_{trade}>0$ for similar reasons as for producers.
We will determine the appropriate $V_{trade}$ in Corollary \ref{prop:bounded_volumes}.

Consumers would like to maximize their profit subject to a risk budget.
Under a mean-variance optimization framework they are interested in
the mean-variance utility

\[
\begin{array}{rcl}
\Psi_{c}\left(V_{c}\right) & = & \mathbb{E}^{\mathbb{P}}\left[P_{c}\left(V_{c},\Pi\right)\right]-\frac{\lambda_{p}}{2}\text{Var}^{\mathbb{P}}\left[P_{c}\left(V_{c},\Pi\right)\right]\\
\\
 & = & -\mathbb{E}^{\mathbb{P}}\left[\Pi\right]^{\top}V_{c}-\frac{\lambda}{2}V_{c}^{\top}Q_{c}V_{c},
\end{array}
\]
where $\lambda_{c}>0$ is their risk preference and $Q_{c}:=\mathbb{E}^{\mathbb{P}}\left[\left(\Pi-\mathbb{E}^{\mathbb{P}}\left[\Pi\right]\right)\left(\Pi-\mathbb{E}^{\mathbb{P}}\left[\Pi\right]\right)^{\top}\right]$
a covariance matrix. Their objective is to solve the following optimization
problem
\begin{equation}
\Phi_{c}=\underset{V_{c}}{\text{max }}\Psi_{c}\left(V_{c}\right)\label{eq:opt_con}
\end{equation}
subject to (\ref{eq:cb2}) and (\ref{eq:cb1}).

\subsection{The hypothetical market agent}

Given the electricity $\Pi$, fuel $G$, and emission $G_{em}$ price
vectors, each producer $p\in P$ and each consumer $c\in C$ can calculate
their optimal electricity trading vectors $V_{p}$ and $V_{c}$ by
solving (\ref{eq:opt_prod}) and (\ref{eq:opt_con}), respectively.
However, the players are not necessary able to execute their calculated
optimal trading strategies because they may not find the counterparty
to trade with. In reality each contract consists of a buyer and a
seller, which imposes an additional constraint (also called the market
clearing constraint) that matches the number of short and long electricity
forward contracts for each $i\in I_{j}$ and $j\in J$ as
\begin{equation}
\sum_{c\in C}V_{c}\left(t_{i},T_{j}\right)+\sum_{p\in P}V_{p}\left(t_{i},T_{j}\right)=0.\label{eq:mk}
\end{equation}
The electricity market is responsible for satisfying this constraint
by matching buyers with sellers. The matching is done through sharing
of the price and order book information among all market participants.
If at the current price there are more long contract than short contracts,
it means that the current price is too low and asks will start to
be submitted at higher prices. The converse occurs, if there are more
short contracts than long contracts. Eventually, the electricity price
at which the number of long and short contracts matches is found.
At such a price, (\ref{eq:mk}) is satisfied ``naturally'' without
explicitly requiring the players to satisfy it. They do so because
it is in their best interest, i.e. it maximizes their mean-variance
objective functions.

The question is how to formulate such a ``natural'' constraint in
an optimization framework. A naive approach of writing the market
clearing constraint as an ordinary constraint forces the players to
satisfy it regardless of the price. We need a mechanism that models
the matching of buyers and sellers as it is performed by the electricity
market. For this purpose, we introduce a hypothetical market agent
that is allowed to slowly change electricity prices to ensure that
(\ref{eq:mk}) is satisfied.

Assume first that the feasible set of the hypothetical agent must
satisfy

\begin{equation}
-\Pi_{max}\leq\mathbb{E}^{\mathbb{P}}\left[\Pi\left(t_{i},T_{j}\right)\right]\leq\Pi_{max}\label{eq:m1}
\end{equation}
for all $i\in I_{j}$, $j\in J$, and for some large $\Pi_{max}>0$.
We write (\ref{eq:m1}) in matrix notation as
\begin{equation}
B_{M}\mathbb{E}^{\mathbb{P}}\left[\Pi\right]\leq b_{M}\label{eq:m1mat}
\end{equation}
for some $B_{M}\in\mathbb{R}^{2N\times N}$ and $b_{M}\in\mathbb{R}^{2N}$.
Boundedness of prices is a reasonable assumption, because unbounded
prices lead to infinite cash flows, which must result in the bankruptcy
of one of the counterparties involved. Besides limiting the possibility
of the bankruptcy of the market players, the boundedness of prices
also allows us to use game theoretic results that only apply to compact
sets. If $\Pi_{max}$ is set large enough (see Lemma \ref{prop:bounded_prices}),
then the optimal $\mathbb{E}^{\mathbb{P}}\left[\Pi\right]$ can never
be on the boundary of the feasible region. Hence the constraint does
not have any practical impact on the equilibrium electricity price,
but, as we will see later, it significantly simplifies the theoretical
analysis.

Let the hypothetical market agent have the following profit function
\begin{equation}
P_{M}\left(\Pi,V\right)=\sum_{j\in J}e^{-\hat{r}T_{j}}\left[\sum_{i\in I_{j}}\Pi\left(t_{i},T_{j}\right)\left(\sum_{c\in C}V_{c}\left(t_{i},T_{j}\right)+\sum_{p\in P}V_{p}\left(t_{i},T_{j}\right)\right)\right]\label{eq:market-1}
\end{equation}
and the expected profit
\begin{equation}
\Psi_{M}\left(\mathbb{E}^{\mathbb{P}}\left[\Pi\right],V\right)=\mathbb{E}^{\mathbb{P}}\left[P_{M}\left(V,\Pi\right)\right],\label{eq:utility_market}
\end{equation}
where $V=\left[V_{P}^{\top},V_{C}^{\top}\right]^{\top}$, $V_{P}=\left|\right|_{p\in P}V_{p}$,
and $V_{C}=\left|\right|_{c\in C}V_{c}$ and let the hypothetical
market agent attempt to solve
\begin{equation}
\Phi_{M}\left(V\right)=\underset{\mathbb{E}^{\mathbb{P}}\left[\Pi\right]}{\text{max }}\Psi_{M}\left(\mathbb{E}^{\mathbb{P}}\left[\Pi\right],V\right)\label{eq:SO}
\end{equation}
subject to (\ref{eq:m1mat}). The KKT conditions for (\ref{eq:SO})
in the matrix notation read
\begin{equation}
\begin{array}{rcl}
\sum_{c\in C}V_{c}+\sum_{p\in P}V_{p}-B_{M}^{\top}\eta_{M} & = & 0\\
\\
\left(B_{M}\mathbb{E}^{\mathbb{P}}\left[\Pi\right]-b_{M}\right)^{\top}\eta_{M} & = & 0\\
\\
B_{M}\mathbb{E}^{\mathbb{P}}\left[\Pi\right]-b_{M} & \leq & 0\\
\\
\eta_{M} & \geq & 0,
\end{array}\label{eq:KKT_hyp_mark}
\end{equation}
where $\eta_{M}$ denotes the dual variables of (\ref{eq:m1mat}).
Since the optimal $\mathbb{E}^{\mathbb{P}}\left[\Pi\right]$ can never
be on the boundary of the feasible region (see Lemma \ref{prop:bounded_prices}),
we can conclude that $\eta_{M}=0$. Optimality conditions (\ref{eq:KKT_hyp_mark})
are then simplified to (\ref{eq:mk}). Therefore, we can conclude
that (\ref{eq:mk}) and (\ref{eq:SO}) are equivalent, if $\eta_{M}=0$
truly holds. The relationship between (\ref{eq:mk}) and (\ref{eq:SO})
will be investigated rigorously in Proposition \ref{prop:NE-CErelation}.

Note, that the equivalence of (\ref{eq:mk}) and (\ref{eq:SO}) is
a theoretical result that has to be applied with caution in an algorithmic
framework. Formulation (\ref{eq:SO}) is clearly unstable since only
a small mismatch in the market clearing constraint sends the prices
to $\pm\Pi_{max}$. A description of an algorithm for the computation
of the equilibrium electricity price exceeds the scope of this paper
and will be examined separately.

For the further argumentation we define $v_{P}=\left|\right|_{p\in P}v_{p}$
and $v=\left[v_{P}^{\top},V_{C}^{\top}\right]^{\top}$.

\subsection{\label{sub:Construction-of-price}Construction of the equilibrium
electricity price process}

In this subsection we take a closer look at the electricity price
process $\Pi\left(t_{i},T_{j}\right)_{i\in I_{j}}$, where the expectation
of the process is defined internally as a decision of the hypothetical
market agent in order to match supply and demand. 

Since electricity prices $\Pi\left(t_{i},T_{j}\right)_{i\in I_{j}}$
are adapted for any fixed $j\in J$ and $\left|\mathbb{E}^{\mathbb{P}}\left[\Pi\right]\right|<\infty$
(see Constraint (\ref{eq:m1})), they can be uniquely decomposed,
using the Doob Decomposition Theorem, into a sum of a martingale process
$M\left(t_{i},T_{j}\right)_{i\in I_{j}}$ and an integrable predictable
process $A\left(t_{i},T_{j}\right)_{i\in I_{j}}$, $A\left(t_{0},T_{j}\right)=0$,
such that
\[
\Pi\left(t_{i},T_{j}\right)=A\left(t_{i},T_{j}\right)+M\left(t_{i},T_{j}\right)
\]
for every $i\in I_{j}$. Define  
\[
M\left(t_{i},T_{j}\right):=\Pi\left(t_{0},T_{j}\right)+\sum_{k=1}^{i}\left(\Pi\left(t_{k},T_{j}\right)-\mathbb{E}^{\mathbb{P}}\left[\Pi\left(t_{k},T_{j}\right)\left|\mathcal{F}_{k-1}\right.\right]\right)
\]
and
\[
A\left(t_{i},T_{j}\right):=\sum_{k=1}^{i}\left(\mathbb{E}^{\mathbb{P}}\left[\Pi\left(t_{k},T_{j}\right)\left|\mathcal{F}_{k-1}\right.\right]-\Pi\left(t_{k-1},T_{j}\right)\right),
\]
where $\mathcal{F}_{k}:=\mathcal{F}_{t_{k}}$. It is easy to see that
$M\left(t_{i},T_{j}\right)$ is a martingale since 
\[
\mathbb{E}^{\mathbb{P}}\left[M\left(t_{i},T_{j}\right)-M\left(t_{i-1},T_{j}\right)\left|\mathcal{F}_{i-1}\right.\right]=0\quad\text{a.s.}
\]
Moreover, $A\left(t_{i},T_{j}\right)$ is predictable, that is, $\mathcal{F}_{i-1}$-measurable.

Note that 
\[
\mathbb{E}^{\mathbb{P}}\left[\Pi\left(t_{i},T_{j}\right)\left|\mathcal{F}_{i-1}\right.\right]=A\left(t_{i},T_{j}\right)+M\left(t_{i-1},T_{j}\right),
\]
and 
\begin{equation}
\mathbb{E}^{\mathbb{P}}\left[\Pi\left(t_{i},T_{j}\right)\left|\mathcal{F}_{0}\right.\right]=\mathbb{E}^{\mathbb{P}}\left[A\left(t_{i},T_{j}\right)\left|\mathcal{F}_{0}\right.\right]+M\left(t_{0},T_{j}\right).\label{eq:fil0}
\end{equation}
Let us now allow the hypothetical market agent to choose 
\[
\tilde{A_{i}}\left(T_{j}\right):=\left[\tilde{A}\left(t_{i+1},T_{j}\right),\mathbb{E}^{\mathbb{P}}\left[\tilde{A}\left(t_{i+2},T_{j}\right)\left|\mathcal{F}_{i}\right.\right],...,\mathbb{E}^{\mathbb{P}}\left[\tilde{A}\left(t_{\max\left\{ I_{j}\right\} },T_{j}\right)\left|\mathcal{F}_{i}\right.\right]\right]^{\top}
\]
at time $t_{i}$, $i\in I_{j}$ for all $j\in J$. We then model $\Pi\left(t_{i},T_{j}\right)$
with a new probability measure $\tilde{\mathbb{P}}$ where $A\left(t_{i},T_{j}\right)$
is defined internally and not by Doob decomposition of $\Pi\left(t_{i},T_{j}\right)$
as before. More formally, we define a new probability measure $\tilde{\mathbb{P}}:\mathcal{F}\times\mathbb{R}^{N}\rightarrow\left[0,1\right]$
such that for any fixed $j\in J$, $i\in I_{j}$, and for all $D\in\mathcal{F}_{i-1}$
\[
\begin{array}{rcl}
\tilde{\mathbb{P}}\left(\Pi\left(t_{i},T_{j}\right)\in D;\tilde{A}_{i-1}\left(T_{j}\right)\right) & = & \mathbb{P}\left(M\left(t_{i},T_{j}\right)+\tilde{A}\left(t_{i},T_{j}\right)\in D\right)\\
\\
 & = & \mathbb{P}\left(M\left(t_{i},T_{j}\right)+A\left(t_{i},T_{j}\right)+\tilde{A}\left(t_{i},T_{j}\right)-A\left(t_{i},T_{j}\right)\in D\right)\\
\\
 & = & \mathbb{P}\left(\Pi\left(t_{i},T_{j}\right)+\tilde{A}\left(t_{i},T_{j}\right)-A\left(t_{i},T_{j}\right)\in D\right)\\
\\
 & = & \mathbb{P}\left(\Pi\left(t_{i},T_{j}\right)\in\varphi\left(D,\tilde{A}\left(t_{i},T_{j}\right)-A\left(t_{i},T_{j}\right)\right)\right)
\end{array}
\]
where $\varphi:\mathcal{F}\times\mathbb{R}\rightarrow\mathcal{F}$
denotes a translation of a set, i.e. $\varphi\left(D,\Delta d\right):=\left\{ d:d+\Delta d\in D\right\} $.
Since
\begin{equation}
\mathbb{E}^{\tilde{\mathbb{P}}}\left[\Pi\left(t_{k},T_{j}\right)\left|\mathcal{F}_{i}\right.;\tilde{A_{i}}\left(T_{j}\right)\right]=\mathbb{E}^{\mathbb{P}}\left[\tilde{A}\left(t_{k},T_{j}\right)\left|\mathcal{F}_{i}\right.\right]+M\left(t_{i},T_{j}\right)\label{eq:con}
\end{equation}
where $k\in\left\{ i+1,...,\max\left\{ I_{j}\right\} \right\} $,
this selection can be interpreted as determining the term structure
of the expected power price relative to the current value of $M\left(t_{i},T_{j}\right)$. 

At time $t_{0}$, when players calculate their optimal decisions for
the first time, they assume that they will execute their strategies
without any future alterations. However, we allow recourse at a later
step but this is not taken into account at time $t_{0}$. Therefore,
\begin{equation}
\tilde{A}\left(t_{i},T_{j}\right)=\mathbb{E}^{\mathbb{P}}\left[\tilde{A}\left(t_{i},T_{j}\right)\left|\mathcal{F}_{k}\right.\right]\label{eq:eqv}
\end{equation}
for all $k\in\left\{ 0,...,i-1\right\} $, $j\in J$, and $i\in I_{j}$.
In such a setting, it is enough to determine $\tilde{A}_{0}\left(T_{j}\right)$,
since all other expected prices can be derived from these using (\ref{eq:con})
and (\ref{eq:eqv}). 

The variance under the new probability measure $\tilde{\mathbb{P}}$
can be calculated as 

\begin{equation}
\begin{array}{rcl}
\text{Var}^{\tilde{\mathbb{P}}}\left(\Pi\left(t_{i},T_{j}\right)\left|\mathcal{F}_{0}\right.;\tilde{A_{i}}\left(T_{j}\right)\right) & = & \text{Var}^{\mathbb{P}}\left(\tilde{A}\left(t_{i},T_{j}\right)+M\left(t_{i},T_{j}\right)\left|\mathcal{F}_{0}\right.\right)\\
\\
 & = & \text{Var}^{\mathbb{P}}\left(M\left(t_{i},T_{j}\right)\left|\mathcal{F}_{0}\right.\right).
\end{array}\label{eq:var}
\end{equation}
We can see that the variance depends only on the process $M\left(t_{i},T_{j}\right)$
and can not be influenced by the hypothetical market agent. Using
a similar reasoning as in (\ref{eq:var}), we can also conclude 
\begin{equation}
\mathbb{E}^{\tilde{\mathbb{P}}}\left[\left(\pi_{p}-\mathbb{E}^{\tilde{\mathbb{P}}}\left[\pi_{p};\tilde{A}_{0}\right]\right)\left(\pi_{p}-\mathbb{E}^{\tilde{\mathbb{P}}}\left[\pi_{p};\tilde{A}_{0}\right]\right)^{\top};\tilde{A}_{0}\right]=Q_{p}\label{eq:prodQ}
\end{equation}
and 
\begin{equation}
\mathbb{E}^{\tilde{\mathbb{P}}}\left[\left(\Pi-\mathbb{E}^{\tilde{\mathbb{P}}}\left[\Pi;\tilde{A}_{0}\right]\right)\left(\Pi-\mathbb{E}^{\tilde{\mathbb{P}}}\left[\Pi;\tilde{A}_{0}\right]\right)^{\top};\tilde{A}_{0}\right]=Q_{c},\label{eq:conQ}
\end{equation}
where $\tilde{A}_{0}=\left|\right|_{j\in J}\tilde{A}_{0}\left(T_{j}\right)$.

Without loss of generality, we may set $M\left(t_{0},T_{j}\right)=0$.
Then from (\ref{eq:con}) and (\ref{eq:eqv}),
\begin{equation}
\begin{array}{rcl}
\mathbb{E}^{\tilde{\mathbb{P}}}\left[\Pi\left(t_{i},T_{j}\right)\left|\mathcal{F}_{0}\right.;\tilde{A_{i}}\left(T_{j}\right)\right] & = & \mathbb{E}^{\mathbb{P}}\left[\tilde{A}\left(t_{i},T_{j}\right)\left|\mathcal{F}_{0}\right.\right]\\
\\
 & = & \tilde{A}\left(t_{i},T_{j}\right)
\end{array}
\end{equation}
 for all $i\in I_{j}$ and $j\in J$. Allowing the hypothetical market
agent to choose $\tilde{A}\left(t_{i},T_{j}\right)$ is thus the same
as allowing it to choose $\mathbb{E}^{\tilde{\mathbb{P}}}\left[\Pi\left(t_{i},T_{j}\right)\left|\mathcal{F}_{0}\right.;\tilde{A}\left(t_{i},T_{j}\right)\right]$.
In the rest of the paper, we simplify the notation by writing $\mathbb{E}^{\tilde{\mathbb{P}}}\left[\Pi\left(t_{i},T_{j}\right)\right]$
when we actually mean $\mathbb{E}^{\tilde{\mathbb{P}}}\left[\Pi\left(t_{i},T_{j}\right)\left|\mathcal{F}_{0}\right.;\tilde{A}\left(t_{i},T_{j}\right)\right]$. 

The measure $\tilde{\mathbb{P}}$ corresponds to a physical measure
and $\mathbb{E}^{\tilde{\mathbb{P}}}\left[\cdot\right]$ corresponds
to a real world expectation. The hypothetical market agent is allowed
to choose $\tilde{A}\left(t_{i},T_{j}\right)$ and consequently the
physical measure $\tilde{\mathbb{P}}$. The aim of the hypothetical
market agent could be interpreted as finding the physical measure
$\tilde{\mathbb{P}}$ that is consistent with the facts (e.g. fuel
forward prices, emission prices, demand forecasts etc.) observable
in the real world.

\subsection{Matrix notation}

The analysis of the problem is greatly simplified if a more compact
notation is introduced. In this subsection we will also rewrite equations
using the new probability measure $\tilde{\mathbb{P}}$ instead of
$\mathbb{P}$ where applicable.

The profit of producer $p\in P$ can be written as
\[
P_{p}\left(v_{p},\pi_{p}\right)=-\pi_{p}^{\top}v_{p}.
\]
The equality constraints can be expressed as
\[
A_{p}v_{p}=0
\]
and the inequality constraints as 
\[
B_{p}v_{p}\leq b_{p}
\]
for some $A_{p}\in\mathbb{R}^{\left|J\right|\left(\left|L\right|+1\right)+1\times\dim v_{p}}$,
$B_{p}\in\mathbb{R}^{n_{p}\times\dim v_{p}}$ and $b_{p}\in\mathbb{R}^{n_{p}}$,
where $n_{p}$ denotes the number of all inequality constraints of
producer $p\in P$. Define a feasible set 
\[
S_{p}:=\left\{ v_{p}:A_{p}v_{p}=a_{p}\text{ \text{and }}B_{p}v_{p}\leq b_{p}\right\} .
\]
It is useful to investigate the inner structure of the matrices. By
considering equality constraints (\ref{eq:pb2-1}), (\ref{eq:pb3-1}),
and (\ref{eq:pb4}) we can see that 
\begin{equation}
A_{p}=\left[\begin{array}{ccc}
\hat{A}_{1} & 0 & \hat{A}_{3,p}\\
0 & \hat{A}_{2} & \hat{A}_{4,p}
\end{array}\right]\label{eq:Ap}
\end{equation}
where $\hat{A}_{1}\in\mathbb{R}^{\left|J\right|\times N},\hat{A}_{2}\in\mathbb{R}^{\left(\left|J\right|\left|L\right|+1\right)\times N\left(\left|L\right|+1\right)},\hat{A}_{3,p}\in\mathbb{R}^{\left|J\right|\times\dim W_{p}},\hat{A}_{4,p}\in\mathbb{R}^{\left(\left|J\right|\left|L\right|+1\right)\times\dim W_{p}}$.
One can see that matrices $\hat{A}_{1}$ and $\hat{A}_{2}$ are independent
of producer $p\in P$ and matrices $\hat{A}_{3,p}$ and $\hat{A}_{4,p}$
depend on producer $p\in P$. One can further investigate the structure
of $\hat{A}_{1}$ and see 
\begin{equation}
\hat{A}_{1}=\left[\begin{array}{ccc}
1_{1} &  & 0\\
 & \ddots\\
0 &  & 1_{T'}
\end{array}\right],\label{eq:A1}
\end{equation}
where $1_{j}$, $j\in J$ is a row vector of ones of length $\left|I_{j}\right|$.
Similarly,
\begin{equation}
\hat{A}_{2}=\left[\begin{array}{cccc}
\hat{A}_{1} & \cdots & 0 & 0\\
\vdots & \ddots & \vdots & \vdots\\
0 & \cdots & \hat{A}_{1} & 0\\
0 & \cdots & 0 & 1_{\left|N\right|}
\end{array}\right],\label{eq:A2}
\end{equation}
where the number of rows in the block notation above is $\left|L\right|+1$.
The first $\left|L\right|$ rows correspond to (\ref{eq:pb3-1}) and
the last row correspond to (\ref{eq:pb4}). 

In a compact notation, the mean-variance utility of producer $p\in P$
can be calculated as
\[
\begin{array}{rcl}
\Psi_{p}\left(v_{p},\mathbb{E}^{\mathbb{\tilde{P}}}\left[\Pi\right]\right) & = & \mathbb{E}^{\tilde{\mathbb{P}}}\left[-\pi_{p}^{\top}v_{p}-\frac{1}{2}\lambda_{p}v_{p}^{\top}\left(\pi_{p}-\mathbb{E}^{\tilde{\mathbb{P}}}\left[\pi_{p}\right]\right)\left(\pi_{p}-\mathbb{E}^{\tilde{\mathbb{P}}}\left[\pi_{p}\right]\right)^{\top}v_{p}\right]\\
\\
 & = & -\mathbb{E}^{\tilde{\mathbb{P}}}\left[\pi_{p}\right]^{\top}v_{p}-\frac{1}{2}\lambda_{p}v_{p}^{\top}Q_{p}v_{p},
\end{array}
\]
where (\ref{eq:prodQ}) was used. The inner structure of matrix $Q_{p}$
is the following 
\begin{equation}
Q_{p}=\left[\begin{array}{ccc}
\hat{Q}_{1} & \hat{Q}_{2} & 0\\
\hat{Q}_{2}^{\top} & \hat{Q}_{3} & 0\\
0 & 0 & 0
\end{array}\right],
\end{equation}
where $\hat{Q}_{1}\in\mathbb{R}^{N\times N},\hat{Q}_{2}\in\mathbb{R}^{N\times\left(\dim B_{p}+\dim O_{p}\right)}=\mathbb{R}^{N\times N\left(\left|L\right|+1\right)},\hat{Q}_{3}\in\mathbb{R}^{N\left(\left|L\right|+1\right)\times N\left(\left|L\right|+1\right)}$.
One can see that $\hat{Q}_{1}$, $\hat{Q}_{2}$, and $\hat{Q}_{3}$
do not depend on producer $p\in P$. The size of the larger matrix
$Q_{p}$ depends on producer $p\in P$, because different producers
have different number of power plants. 

Producer $p\in P$ attempts to solve the following optimization problem
\[
\Phi_{p}\left(\mathbb{E}^{\tilde{\mathbb{P}}}\left[\Pi\right]\right)=\underset{v_{p}\in S_{p}}{\text{max }}-\mathbb{E}^{\tilde{\mathbb{P}}}\left[\pi_{p}\right]^{\top}v_{p}-\frac{1}{2}\lambda_{p}v_{p}^{\top}Q_{p}v_{p}.
\]

The profit of consumer $c\in C$ can be written as
\[
P_{c}\left(V_{c},\Pi\right)=-\Pi^{\top}V_{c}.
\]
Note here that we set $s_{c}=0$, w.l.o.g. The equality constraints
can be expressed as
\[
A_{c}V_{c}=a_{c}
\]
and the inequality constraints as 
\[
B_{c}V_{c}\leq b_{c}
\]
where $A_{c}=\hat{A}_{1}$, $B_{c}\in\mathbb{R}^{2N\times N}$, $a_{c}\in\mathbb{R}^{\left|J\right|}$
and $b_{c}\in\mathbb{R}^{N}$. Define a feasible set
\[
S_{c}:=\left\{ V_{c}\in\mathbb{R}^{N}:A_{c}V_{c}=a_{c}\text{ \text{and }}B_{c}V_{c}\leq b_{c}\right\} .
\]

In a compact notation, the mean-variance utility of a consumer $c\in C$
can be calculated as
\[
\begin{array}{rcl}
\Psi_{c}\left(V_{c},\mathbb{E}^{\tilde{\mathbb{P}}}\left[\Pi\right]\right) & = & \mathbb{E}^{\tilde{\mathbb{P}}}\left[-\Pi^{\top}V_{c}-\frac{1}{2}\lambda_{c}V_{c}^{\top}\left(\Pi-\mathbb{E}^{\tilde{\mathbb{P}}}\left[\Pi\right]\right)\left(\Pi-\mathbb{E}^{\tilde{\mathbb{P}}}\left[\Pi\right]\right)^{\top}V_{c}\right]\\
\\
 & = & -\mathbb{E}^{\tilde{\mathbb{P}}}\left[\Pi\right]^{\top}V_{c}-\frac{\lambda}{2}V_{c}^{\top}Q_{c}V_{c},
\end{array}
\]
where (\ref{eq:conQ}) was used. Moreover, note that $Q_{c}=\hat{Q}_{1}$
for all $c\in C$. 

Consumer $c\in C$ attempts to solve the following optimization problem
\[
\Phi_{c}\left(\mathbb{E}^{\tilde{\mathbb{P}}}\left[\Pi\right]\right)=\underset{V_{c}\in S_{c}}{\text{max }}-\mathbb{E}^{\tilde{\mathbb{P}}}\left[\Pi\right]^{\top}V_{c}-\frac{\lambda}{2}V_{c}^{\top}Q_{c}V_{c}.
\]

The profit function of the hypothetical market agent can be written
in a compact notation as
\begin{equation}
P_{M}\left(\Pi,V\right)=\Pi^{\top}\left(\sum_{c\in C}V_{c}+\sum_{p\in P}V_{p}\right)\label{eq:market_profit}
\end{equation}
and the expected utility as
\begin{equation}
\begin{array}{rcl}
\Psi_{M}\left(\mathbb{E}^{\tilde{\mathbb{P}}}\left[\Pi\right],V\right) & = & \mathbb{E}^{\tilde{\mathbb{P}}}\left[P_{M}\left(\Pi,V\right)\right]\\
\\
 & = & \mathbb{E}^{\tilde{\mathbb{P}}}\left[\Pi\right]^{\top}\left(\sum_{c\in C}V_{c}+\sum_{p\in P}V_{p}\right).
\end{array}\label{eq:utility_market_mx}
\end{equation}
Consequently, the hypothetical market agent's objective is to solve
\begin{equation}
\Phi_{M}\left(V\right)=\underset{\mathbb{E}^{\tilde{\mathbb{P}}}\left[\Pi\right]\in S_{M}}{\text{max }}\mathbb{E}^{\tilde{\mathbb{P}}}\left[\Pi\right]^{\top}\left(\sum_{c\in C}V_{c}+\sum_{p\in P}V_{p}\right),\label{eq:SO-1}
\end{equation}
where the feasible set of the hypothetical market agent is defined
as 
\[
S_{M}:=\left\{ \mathbb{E}^{\tilde{\mathbb{P}}}\left[\Pi\right]\in\mathbb{R}^{N}:-\Pi_{max}\leq\mathbb{E}^{\tilde{\mathbb{P}}}\left[\Pi\left(t_{i},T_{j}\right)\right]\leq\Pi_{max}\text{ \text{for all }}j\in J,i\in I_{j}\right\} .
\]

\section{Analysis of the model\label{sec:Analysis}}

In this section we analyze the existence and uniqueness of solutions
of the model defined in the previous section. 

\begin{definition}\label{Competitive-Equilibrium-(CE)}Competitive
Equilibrium (CE)

Decisions $v^{*}$ and $\mathbb{E}^{\tilde{\mathbb{P}}}\left[\Pi\right]^{*}$
constitute a competitive equilibrium if
\begin{enumerate}
\item For every producer $p\in P$, $v_{p}^{*}$ is a strategy such that
\begin{equation}
\Psi_{p}\left(v_{p},\mathbb{E}^{\tilde{\mathbb{P}}}\left[\Pi\right]^{*}\right)\leq\Psi_{p}\left(v_{p}^{*},\mathbb{E}^{\tilde{\mathbb{P}}}\left[\Pi\right]^{*}\right)\label{eq:CE_producers}
\end{equation}
for all $v_{p}\in S_{p}$;
\item For every consumer $c\in C$, $V_{c}^{*}$ is a strategy such that
\begin{equation}
\Psi_{c}\left(V_{c},\mathbb{E}^{\tilde{\mathbb{P}}}\left[\Pi\right]^{*}\right)\leq\Psi_{c}\left(V_{c}^{*},\mathbb{E}^{\tilde{\mathbb{P}}}\left[\Pi\right]^{*}\right)\label{eq:CE_consumers}
\end{equation}
for all $V_{c}\in S_{c}$;
\item For each $i\in I_{j}$ and $j\in J$ 
\begin{equation}
0=\sum_{c\in C}V_{c}\left(t_{i},T_{j}\right)+\sum_{p\in P}V_{p}\left(t_{i},T_{j}\right)\label{eq:vol}
\end{equation}
must hold.
\end{enumerate}
\end{definition}

\begin{definition}\label{Nash-Equilibrium-(NE)}Nash Equilibrium
(NE)

Decisions $v^{*}$ and $\mathbb{E}^{\tilde{\mathbb{P}}}\left[\Pi\right]^{*}$
constitute a Nash equilibrium if
\begin{enumerate}
\item For every producer $p\in P$, $v_{p}^{*}$ is a strategy such that
\begin{equation}
\Psi_{p}\left(v_{p},\mathbb{E}^{\tilde{\mathbb{P}}}\left[\Pi\right]^{*}\right)\leq\Psi_{p}\left(v_{p}^{*},\mathbb{E}^{\tilde{\mathbb{P}}}\left[\Pi\right]^{*}\right)
\end{equation}
for all $v_{p}\in S_{p}$;
\item For every consumer $c\in C$, $V_{c}^{*}$ is a strategy such that
\begin{equation}
\Psi_{c}\left(V_{c},\mathbb{E}^{\tilde{\mathbb{P}}}\left[\Pi\right]^{*}\right)\leq\Psi_{c}\left(V_{c}^{*},\mathbb{E}^{\tilde{\mathbb{P}}}\left[\Pi\right]^{*}\right)
\end{equation}
for all $V_{c}\in S_{c}$;
\item Price vector $\mathbb{E}^{\tilde{\mathbb{P}}}\left[\Pi\right]^{*}$
maximizes the objective function of the hypothetical market agent,
i.e. 
\begin{equation}
\Psi_{M}\left(\mathbb{E}^{\tilde{\mathbb{P}}}\left[\Pi\right],v^{*}\right)\leq\Psi_{M}\left(\mathbb{E}^{\tilde{\mathbb{P}}}\left[\Pi\right]^{*},v^{*}\right)\label{eq:market}
\end{equation}
for all $\mathbb{E}^{\tilde{\mathbb{P}}}\left[\Pi\right]\in S_{M}$.
\end{enumerate}
\end{definition}

\begin{assumption}\label{ass: as1} For all $p\in P$, the exists
vector $v_{p}$ such that $A_{p}v_{p}=a_{p}$ a.s. and $B_{p}v_{p}<b_{p}$
a.s., for all $c\in C$, there exists vector $V_{c}$ such that $A_{c}V_{c}=a_{c}$
a.s. and $B_{c}V_{c}<b_{c}$ a.s., and the vectors $V_{p}$ and $V_{c}$
can be chosen so that (\ref{eq:vol}) is satisfied.\end{assumption}

\subsection{Existence of Solution}

We start the analysis of the existence of solution with stating the
following well known theorem.

\begin{theorem}\label{thm:(Debreu,-Glickberg,-Fan)}(Debreu, Glickberg,
Fan) 

Consider a strategic form game $\left\langle \mathcal{N},\left(S_{i}\right)_{i\in\mathcal{N}},\left(\Psi_{i}\right)_{i\in\mathcal{N}}\right\rangle $,
where $\mathcal{N}$ denotes a set of players. Let $s_{-i}\in S_{-i}=\prod_{i\neq j}S_{j}$
denote a vector of actions for all players except player $i\in\mathcal{N}$,
and let $s_{i}\in S_{i}$ denote an action for player $i\in\mathcal{N}$.
If for each $i\in\mathcal{N}$
\begin{enumerate}
\item feasible set $S_{i}$ is non-empty, compact and convex;
\item $\Psi_{i}\left(s_{i},s_{-i}\right)$ is continuous in $s_{-i}$
\item $\Psi_{i}\left(s_{i},s_{-i}\right)$ is continuous and concave in
$s_{i}$,
\end{enumerate}
then a pure Nash equilibrium exists.

\end{theorem}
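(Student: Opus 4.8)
The plan is to derive the existence of a pure Nash equilibrium from Kakutani's fixed-point theorem applied to the joint best-response correspondence, which is the classical route to this result. First I would form the aggregate strategy space $S=\prod_{i\in\mathcal{N}}S_{i}$; by hypothesis (1) each $S_{i}$ is non-empty, compact and convex, so $S$ inherits these properties as a finite product. For each player $i\in\mathcal{N}$ I would introduce the best-response correspondence
\[
B_{i}(s_{-i})=\Big\{\,s_{i}\in S_{i}:\Psi_{i}(s_{i},s_{-i})\geq\Psi_{i}(s_{i}',s_{-i})\ \text{for all }s_{i}'\in S_{i}\,\Big\},
\]
and assemble these into a correspondence $B:S\to 2^{S}$ defined by $B(s)=\prod_{i\in\mathcal{N}}B_{i}(s_{-i})$. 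The point of this construction is that a fixed point $s^{*}\in B(s^{*})$ is exactly a profile in which no player can improve by a unilateral deviation, i.e. a pure Nash equilibrium; it therefore suffices to verify the hypotheses of Kakutani's theorem for $B$.

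Next I would check non-emptiness and convexity of the values of $B$. Non-emptiness of $B_{i}(s_{-i})$ follows from the Weierstrass extreme value theorem: for fixed $s_{-i}$ the map $s_{i}\mapsto\Psi_{i}(s_{i},s_{-i})$ is continuous on the compact set $S_{i}$ by hypothesis (3), hence attains its maximum. Convexity of $B_{i}(s_{-i})$ follows from the concavity of $\Psi_{i}$ in $s_{i}$, also hypothesis (3), since the set of maximizers of a concave function over a convex set is convex. Because finite products preserve both properties, $B(s)$ is non-empty and convex for every $s\in S$.

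The remaining and most delicate requirement is that $B$ have a closed graph, equivalently that it be upper hemicontinuous with closed values, and this is where the main effort lies. I would obtain it from Berge's maximum theorem: treating $s_{-i}$ as a parameter and the constant correspondence $s_{-i}\mapsto S_{i}$ as a trivially continuous, compact-valued constraint, the joint continuity of $\Psi_{i}$ in $(s_{i},s_{-i})$ --- furnished by hypotheses (2) and (3) together --- yields that the maximizer correspondence $s_{-i}\mapsto B_{i}(s_{-i})$ is upper hemicontinuous and closed-valued. A finite product of upper hemicontinuous, closed-valued correspondences is again upper hemicontinuous with closed graph, so $B$ has the required property. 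The subtlety I would be careful about here is that Berge's theorem needs genuine \emph{joint} continuity of each $\Psi_{i}$, so the separate continuity in $s_{-i}$ and in $s_{i}$ as stated must be read (or strengthened) to this effect, since separate continuity alone does not in general produce a closed graph. With $S$ non-empty, compact and convex and $B:S\to 2^{S}$ non-empty-valued, convex-valued and closed-graphed, Kakutani's fixed-point theorem delivers $s^{*}\in S$ with $s^{*}\in B(s^{*})$; unwinding the product structure gives $s_{i}^{*}\in B_{i}(s_{-i}^{*})$ for every $i\in\mathcal{N}$, which is precisely the defining condition of a pure Nash equilibrium.
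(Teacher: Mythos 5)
The paper does not prove this theorem itself---it simply cites \cite{debreu1952asocial}---and your argument via the joint best-response correspondence, Weierstrass, concavity of the maximizer set, Berge's maximum theorem, and Kakutani's fixed-point theorem is exactly the standard proof of that cited result, so it is correct and takes essentially the same route. Your caveat about joint versus separate continuity is well placed: as literally written, hypotheses (2) and (3) only assert separate continuity of $\Psi_{i}$ in $s_{-i}$ and in $s_{i}$, which does not in general imply the joint continuity that Berge's theorem (and the theorem as classically stated) requires, so the hypotheses must indeed be read as joint continuity for the argument to close.
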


The proof for the above theorem can be found in \cite{debreu1952asocial}.
It is straightforward to apply the theorem and show that our problem
from Definition \ref{Nash-Equilibrium-(NE)} has a solution.

\begin{corollary}\label{cor:exist}Let Assumption \ref{ass: as1}
hold. Then there exists a pure NE for Problem \ref{Nash-Equilibrium-(NE)}.
\end{corollary}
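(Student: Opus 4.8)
The plan is to apply Theorem~\ref{thm:(Debreu,-Glickberg,-Fan)} directly to the strategic form game whose player set is $\mathcal{N}=P\cup C\cup\{M\}$, with the producers, consumers, and hypothetical market agent using the strategy sets $S_p$, $S_c$, $S_M$ and payoffs $\Psi_p$, $\Psi_c$, $\Psi_M$ introduced in the matrix-notation subsection. It then suffices to verify the three hypotheses of the theorem for each of the three kinds of player; the conclusion of Corollary~\ref{cor:exist} is then immediate.

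First I would check that every feasible set is non-empty, compact, and convex. Each of $S_p$, $S_c$, $S_M$ is a polyhedron, being the intersection of finitely many half-spaces and hyperplanes, and is therefore convex and closed. Compactness reduces to boundedness: every coordinate of $v_p$ lies in a bounded interval thanks to the capacity bounds (\ref{eq:pb1}) on $W_p$ together with the trading bounds (\ref{eq:pb2}), (\ref{eq:pb3}), (\ref{eq:pbb4}) on $V_p$, $F_p$, $O_p$; every coordinate of $V_c$ is bounded by (\ref{eq:cb1}); and $S_M$ is the box (\ref{eq:m1}). Non-emptiness of $S_p$ and $S_c$ is supplied by Assumption~\ref{ass: as1}, which in fact provides a strictly feasible point, while $S_M$ is trivially non-empty.

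Next I would verify continuity and concavity of each payoff in the player's own action, and continuity in the remaining players' actions. The producer payoff $\Psi_p(v_p,\mathbb{E}^{\tilde{\mathbb{P}}}[\Pi])=-\mathbb{E}^{\tilde{\mathbb{P}}}[\pi_p]^{\top}v_p-\tfrac{1}{2}\lambda_p v_p^{\top}Q_p v_p$ is a quadratic in $v_p$ whose Hessian is $-\lambda_p Q_p$; since $Q_p$ is a covariance matrix it is positive semidefinite, and $\lambda_p>0$, so $\Psi_p$ is concave and continuous in $v_p$. The same argument applies to $\Psi_c$ using $Q_c=\hat{Q}_1\succeq 0$. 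The market agent's payoff $\Psi_M(\mathbb{E}^{\tilde{\mathbb{P}}}[\Pi],v)=\mathbb{E}^{\tilde{\mathbb{P}}}[\Pi]^{\top}(\sum_{c\in C}V_c+\sum_{p\in P}V_p)$ is linear, hence concave and continuous, in its own variable $\mathbb{E}^{\tilde{\mathbb{P}}}[\Pi]$. For the dependence on opponents, $\Psi_p$ and $\Psi_c$ depend on the other players only through the market agent's action $\mathbb{E}^{\tilde{\mathbb{P}}}[\Pi]$, entering linearly via $\mathbb{E}^{\tilde{\mathbb{P}}}[\pi_p]$ and $\mathbb{E}^{\tilde{\mathbb{P}}}[\Pi]$ respectively, while $\Psi_M$ depends linearly on the producers' and consumers' volumes; all of these dependencies are continuous.

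The one point that requires genuine care, rather than presenting a deep obstacle, is that the covariance matrices $Q_p$ and $Q_c$ must not depend on the market agent's choice of $\mathbb{E}^{\tilde{\mathbb{P}}}[\Pi]$ (equivalently, of the predictable drift $\tilde{A}$); were this to fail, the market agent's action would enter $\Psi_p$ and $\Psi_c$ quadratically and the clean reduction to a fixed positive semidefinite quadratic would break down. This invariance is precisely the content of (\ref{eq:prodQ}) and (\ref{eq:conQ}), which show that shifting the drift leaves the conditional covariance unchanged, so that $\mathbb{E}^{\tilde{\mathbb{P}}}[\Pi]$ enters the producer and consumer payoffs only through their linear terms. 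Having verified all three hypotheses for every player, Theorem~\ref{thm:(Debreu,-Glickberg,-Fan)} delivers a pure Nash equilibrium, which is exactly the assertion of the corollary.
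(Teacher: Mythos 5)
Your proposal is correct and follows essentially the same route as the paper's own proof: apply the Debreu--Glicksberg--Fan theorem to the game with players $P\cup C\cup\{M\}$, checking non-emptiness via Assumption \ref{ass: as1}, compactness and convexity from the affine bound constraints, and concavity from positive semidefiniteness of $Q_p$, $Q_c$ and linearity of $\Psi_M$. Your additional remark that (\ref{eq:prodQ}) and (\ref{eq:conQ}) guarantee the covariance matrices do not depend on the market agent's choice is a worthwhile clarification the paper leaves implicit, but it does not change the argument.
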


\begin{proof} A set of players $\mathcal{N}$ is composed of all
producers $p\in P$, consumers $c\in C$, and the hypothetical market
agent. Due to Assumption \ref{ass: as1} it is clear that the feasible
region of each player is non-empty. Since all constraints are affine
functions with non-strict inequalities, it is clearly also convex
and closed. Boundedness is guaranteed by (\ref{eq:pb0}), (\ref{eq:pb1}),
(\ref{eq:pb3}), (\ref{eq:pbb4}), and (\ref{eq:pb2}) for producers,
by (\ref{eq:cb1}) for consumers, and by (\ref{eq:m1}) for the hypothetical
market agent. The covariance matrix $Q_{p}$ in $\Psi_{p}\left(v_{p},\mathbb{E}^{\tilde{\mathbb{P}}}\left[\Pi\right]\right)$,
$v_{p}\in S_{p}$, $\mathbb{E}^{\tilde{\mathbb{P}}}\left[\Pi\right]\in S_{M}$
is by definition positive semidefinite for all $p\in P$ and thus
$\Psi_{p}\left(v_{p},\mathbb{E}^{\tilde{\mathbb{P}}}\left[\Pi\right]\right)$
is concave in $v_{p}$. Similarly, we can also see that the covariance
matrix $Q_{c}$ is positive semidefinite and thus $\Psi_{c}\left(V_{c},\mathbb{E}^{\tilde{\mathbb{P}}}\left[\Pi\right]\right)$,
$V_{c}\in S_{c}$, $\mathbb{E}^{\tilde{\mathbb{P}}}\left[\Pi\right]\in S_{M}$
for all $c\in C$ is concave in $V_{c}$. The function $\Psi_{M}\left(\mathbb{E}^{\tilde{\mathbb{P}}}\left[\Pi\right],v\right)$
that corresponds to the expected utility of the hypothetical market
agent is linear in $\mathbb{E}^{\tilde{\mathbb{P}}}\left[\Pi\right]$
and thus concave. Utility functions of producers, consumers, and the
hypothetical market agent are all quadratic and therefore continuous.
Thus, a pure NE exists. \end{proof}

\subsection{Uniqueness of Solution}

\begin{definition} A power plant $r\in R^{p,l}$, $p\in P$, $l\in L$
is at the upper bound at delivery time $T_{j}$, $j\in J$, if for
every $\epsilon>0$, $W_{p,l,r}\left(T_{j}\right)+\epsilon$ is infeasible.
Similarly, a power plant $r\in R^{p,l}$, $p\in P$, $l\in L$, $r\in R^{p,l}$
is at the lower bound at delivery time $T_{j}$, $j\in J$, if for
every $\epsilon>0$, $W_{p,l,r}\left(T_{j}\right)-\epsilon$ is infeasible.
Bounds on $W_{p,l,r}\left(T_{j}\right)$ are defined by constraints
(\ref{eq:pb0}) and (\ref{eq:pb1}). \end{definition}

Let us start the discussion about the uniqueness of the Nash equilibrium
with a direct consequence of Assumption \ref{ass: as1}.

\begin{lemma}\label{cor:cc1}If there exists $j\in J$ such that
all power plants are at the upper bound simultaneously, then 
\[
0>\sum_{c\in C}\sum_{i\in I_{j}}V_{c}\left(t_{i},T_{j}\right)+\sum_{p\in P}\sum_{i\in I_{j}}V_{p}\left(t_{i},T_{j}\right).
\]
Similarly, if there exists $j\in J$ such that all power plants are
at the lower bound simultaneously, then 
\[
0<\sum_{c\in C}\sum_{i\in I_{j}}V_{c}\left(t_{i},T_{j}\right)+\sum_{p\in P}\sum_{i\in I_{j}}V_{p}\left(t_{i},T_{j}\right).
\]
\end{lemma}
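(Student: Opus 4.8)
The plan is to first turn the claimed inequality into a statement about aggregate production versus demand, and then to play the current configuration off against the strictly feasible profile supplied by Assumption~\ref{ass: as1}. Summing the producers' balance constraint~(\ref{eq:pb2-1}) over $p\in P$ gives $\sum_{p\in P}\sum_{i\in I_{j}}V_{p}(t_{i},T_{j})=-\sum_{p\in P}\sum_{l\in L}\sum_{r\in R^{p,l}}W_{p,l,r}(T_{j})$, and summing the consumers' balance constraint~(\ref{eq:cb2}) over $c\in C$, together with $\sum_{c\in C}p_{c}=1$, gives $\sum_{c\in C}\sum_{i\in I_{j}}V_{c}(t_{i},T_{j})=D(T_{j})$. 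Hence the quantity in the statement equals $D(T_{j})-\sum_{p,l,r}W_{p,l,r}(T_{j})$, so the first assertion is equivalent to $\sum_{p,l,r}W_{p,l,r}(T_{j})>D(T_{j})$, and the second to the reverse inequality.

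Next I would record what Assumption~\ref{ass: as1} provides: a profile $\tilde{v}$ that satisfies all equality constraints, satisfies every inequality constraint \emph{strictly}, and satisfies the market-clearing condition~(\ref{eq:vol}). Applying the same two summations to $\tilde{v}$ and using~(\ref{eq:vol}) yields $\sum_{p,l,r}\tilde{W}_{p,l,r}(T_{j})=D(T_{j})$. It therefore suffices to prove $\sum_{p,l,r}W_{p,l,r}(T_{j})>\sum_{p,l,r}\tilde{W}_{p,l,r}(T_{j})$. Strictness of the inequality constraints at $\tilde{v}$ means that each $\tilde{W}_{p,l,r}(T_{j})$ sits strictly below its capacity $\overline{W}_{max}^{p,l,r}$ and strictly inside its ramping band~(\ref{eq:pb0}); in particular no plant is at its upper bound at $T_{j}$ under $\tilde{v}$.

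The heart of the argument is then a comparison of the current production against this benchmark. For any plant whose upper bound at $T_{j}$ is the capacity constraint~(\ref{eq:pb1}) one has $W_{p,l,r}(T_{j})=\overline{W}_{max}^{p,l,r}>\tilde{W}_{p,l,r}(T_{j})$ at once; summing such strict inequalities over all plants gives $\sum_{p,l,r}W_{p,l,r}(T_{j})>D(T_{j})$ and hence the desired strictly negative sign. The lower-bound claim is the mirror image: a plant pinned at the lower bound through~(\ref{eq:pb1}) has $W_{p,l,r}(T_{j})=0<\tilde{W}_{p,l,r}(T_{j})$, and the summed strict inequalities reverse the conclusion.

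The main obstacle is the case in which a plant is at its upper bound at $T_{j}$ because the ramping constraint~(\ref{eq:pb0}), rather than the capacity constraint, is binding. Then the binding value ties $W_{p,l,r}(T_{j})$ to the neighbouring periods $T_{j-1}$ and $T_{j+1}$, so the clean per-plant comparison $W_{p,l,r}(T_{j})>\tilde{W}_{p,l,r}(T_{j})$ can fail, and the strict domination of total $T_{j}$-production must instead be extracted globally from the strictness of the ramping inequalities at $\tilde{v}$ chained across the affected periods. Making this chaining rigorous---so that the aggregate current production still strictly exceeds $\sum_{p,l,r}\tilde{W}_{p,l,r}(T_{j})=D(T_{j})$---is the delicate step, and it is precisely here that the joint force of Assumption~\ref{ass: as1} (strict feasibility \emph{and} demand satisfaction of a single profile $\tilde{v}$) has to be used.
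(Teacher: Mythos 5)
Your reduction is exactly the paper's: summing (\ref{eq:pb2-1}) over producers and (\ref{eq:cb2}) over consumers turns the claim into $\sum_{p,l,r}W_{p,l,r}(T_{j})>D(T_{j})$ (resp.\ $<$), and the benchmark $\sum_{p,l,r}\tilde{W}_{p,l,r}(T_{j})=D(T_{j})$ extracted from the strictly feasible, market-clearing profile of Assumption \ref{ass: as1} is precisely the content behind the paper's one-line justification. Where the paper simply asserts that the inequality on aggregate production follows ``by Assumption \ref{ass: as1}'', you make the per-plant comparison explicit for plants pinned by the capacity constraint (\ref{eq:pb1}); that part is correct and already contains all the detail the paper itself supplies.

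The step you flag as delicate and leave open is, however, a genuine gap, and the chaining you describe will not close it. When a plant is at its upper bound at $T_{j}$ only because a ramping constraint in (\ref{eq:pb0}) is binding, its production at $T_{j}$ can sit far below the strictly feasible benchmark: take a single plant with $\overline{W}_{max}^{p,l,r}=10$, $\triangle\overline{W}_{max}^{p,l,r}=1$, current profile $W(T_{1})=0$, $W(T_{2})=1$, and a strictly feasible profile $\tilde{W}(T_{1})=\tilde{W}(T_{2})=5$ matching $D(T_{2})=5$. The plant is at its upper bound at $T_{2}$ in the sense of the paper's definition (any increase violates the ramp-up constraint), yet aggregate production $1$ is below demand $5$, so the claimed strict inequality reverses. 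Hence either ``at the upper bound'' must be read as referring to the capacity constraint (\ref{eq:pb1}) alone, or the hypothesis needs strengthening (e.g.\ to production profiles that are maximal jointly across all delivery periods, which is how the lemma is actually invoked later). Your proposal is therefore as complete as the paper's own proof---which silently treats only the capacity case---but you should not expect the announced chaining argument for the ramping case to go through as stated.
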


\begin{proof}

Using (\ref{eq:pb2-1}) and summing over all producers $p\in P$ we
get 
\begin{equation}
\sum_{p\in P}\sum_{i\in I_{j}}V_{p}\left(t_{i},T_{j}\right)=-\sum_{p\in P}\sum_{l\in L}\sum_{r\in R^{p,l}}W_{p,l,r}\left(T_{j}\right).\label{eq:ml2-1}
\end{equation}
Similarly, using (\ref{eq:cb2}) and the fact that $\sum_{c\in C}p_{c}=1$,
we get
\begin{equation}
\begin{array}{rcl}
\sum_{c\in C}\sum_{i\in I_{j}}V_{c}\left(t_{i},T_{j}\right) & = & \sum_{c\in C}p_{c}D\left(T_{j}\right)\\
\\
 & = & D\left(T_{j}\right).
\end{array}\label{eq:ml1-2}
\end{equation}
Assume now that all power plants are at the upper bound simultaneously
for some $j'\in J$. Then by Assumption \ref{ass: as1}, 
\begin{equation}
D\left(T_{j'}\right)<\sum_{p\in P}\sum_{l\in L}\sum_{r\in R^{p,l}}W_{p,l,r}\left(T_{j'}\right)
\end{equation}
and therefore by (\ref{eq:ml2-1}) and (\ref{eq:ml1-2}), 
\[
0>\sum_{c\in C}\sum_{i\in I_{j}}V_{c}\left(t_{i},T_{j}\right)+\sum_{p\in P}\sum_{i\in I_{j}}V_{p}\left(t_{i},T_{j}\right).
\]
A similar argument also holds for the lower bound.\end{proof}

\begin{assumption}\label{ass: as3}None of random variables $\Pi$,
$G$, and $G_{em}$ can be written as a linear combination of the
others a.s.\end{assumption}

In the rest of this paper we assume that Assumption \ref{ass: as1}
and Assumption \ref{ass: as3} always hold. 

\begin{lemma}\label{prop:uniq_vol}The mean-variance objective functions
of producers $p\in P$ and consumers $c\in C$ are strictly concave
in $V_{p}$, $F_{p}$, and $O_{p}$ for all $p\in P$ and in $V_{c}$
for all $c\in C$. \end{lemma}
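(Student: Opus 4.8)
The plan is to reduce the claim to the positive definiteness of the relevant blocks of the covariance matrices and then to extract that definiteness from Assumption \ref{ass: as3}. Since the linear term in $\Psi_{p}$ does not affect concavity and $\lambda_{p}>0$, the concavity behaviour of $\Psi_{p}\left(v_{p},\mathbb{E}^{\tilde{\mathbb{P}}}\left[\Pi\right]\right)=-\mathbb{E}^{\tilde{\mathbb{P}}}\left[\pi_{p}\right]^{\top}v_{p}-\tfrac{1}{2}\lambda_{p}v_{p}^{\top}Q_{p}v_{p}$ is governed entirely by the constant Hessian $-\lambda_{p}Q_{p}$. First I would recall the block form of $Q_{p}$, whose lower-right block vanishes because the $W_{p}$-entries of $\pi_{p}$ are zero; this is precisely why $W_{p}$ is excluded from the statement, as $\Psi_{p}$ is flat in the $W_{p}$-directions. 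Strict concavity in $\left(V_{p},F_{p},O_{p}\right)$ is therefore equivalent to negative definiteness of the corresponding principal block of $-\lambda_{p}Q_{p}$, i.e. to positive definiteness of the leading block $\tilde{Q}_{p}=\bigl[\begin{smallmatrix}\hat{Q}_{1}&\hat{Q}_{2}\\\hat{Q}_{2}^{\top}&\hat{Q}_{3}\end{smallmatrix}\bigr]$. The crucial observation is that $\tilde{Q}_{p}$ is exactly the covariance matrix of the stacked random vector $X:=\left[\Pi^{\top},G^{\top},G_{em}^{\top}\right]^{\top}$, by equation (\ref{eq:prodQ}). For the consumer, the same reduction gives the requirement that $Q_{c}=\hat{Q}_{1}=\mathrm{Cov}(\Pi)$ be positive definite.

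Next I would invoke the standard characterization of nondegenerate covariance matrices. For any vector $a$ of matching dimension one has $a^{\top}\tilde{Q}_{p}a=\mathrm{Var}^{\tilde{\mathbb{P}}}\bigl(a^{\top}X\bigr)\ge 0$, so $\tilde{Q}_{p}$ is always positive semidefinite, and it fails to be positive definite precisely when there is a nonzero $a$ with $\mathrm{Var}^{\tilde{\mathbb{P}}}(a^{\top}X)=0$, i.e. with $a^{\top}X$ equal to a constant $\tilde{\mathbb{P}}$-a.s. The heart of the argument is to rule this out: if such an $a\neq 0$ existed, choosing any index $i$ with $a_{i}\neq 0$ would express the corresponding scalar entry of $\Pi$, $G$, or $G_{em}$ as an affine combination of the remaining entries $\tilde{\mathbb{P}}$-a.s., contradicting Assumption \ref{ass: as3} (read at the level of individual components, and with the constant absorbed into the ``linear combination''). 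Hence $\tilde{Q}_{p}\succ 0$, and $\Psi_{p}$ is strictly concave in $\left(V_{p},F_{p},O_{p}\right)$ for every $p\in P$.

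For the consumers, $\hat{Q}_{1}=\mathrm{Cov}(\Pi)$ is a principal submatrix of $\tilde{Q}_{p}$ and therefore inherits positive definiteness; equivalently, one applies the same degeneracy argument directly to $\Pi$, since a nontrivial a.s.\ affine relation among the components of $\Pi$ alone would again violate Assumption \ref{ass: as3}. Thus $\Psi_{c}$ is strictly concave in $V_{c}$ for every $c\in C$, completing the claim. I expect the only delicate point to be the passage from the vector-level phrasing of Assumption \ref{ass: as3} to the component-level nondegeneracy actually needed, together with the bookkeeping that the additive constant arising from $\mathrm{Var}(a^{\top}X)=0$ is permitted inside the notion of ``linear combination''; once that identification is made, the rest is the routine variance computation above.
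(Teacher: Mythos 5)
Your proposal is correct and follows essentially the same route as the paper: reduce strict concavity in $\left(V_{p},F_{p},O_{p}\right)$ (resp.\ $V_{c}$) to positive definiteness of the block $\hat{Q}$ (resp.\ $\hat{Q}_{1}$) of the Hessian $-\lambda_{p}Q_{p}$, and derive that definiteness from Assumption \ref{ass: as3}. In fact you go slightly further than the paper, which simply asserts $\hat{Q}\succ0$ from the assumption, whereas you supply the variance-degeneracy argument and correctly flag that the assumption must be read at the level of individual components with constants allowed in the linear combination.
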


\begin{proof}The objective function of each producer $p\in P$ can
be written as 
\[
\Psi_{p}\left(v_{p},\mathbb{E}^{\tilde{\mathbb{P}}}\left[\Pi\right]\right)=-\mathbb{E}^{\tilde{\mathbb{P}}}\left[\pi_{p}\right]^{\top}v_{p}-\frac{1}{2}\lambda_{p}v_{p}^{\top}Q_{p}v_{p},
\]
where
\[
Q_{p}=\left[\begin{array}{cc}
\hat{Q} & 0\\
0 & 0
\end{array}\right],
\]
$\hat{Q}:=\left[\begin{array}{cc}
\hat{Q}_{1} & \hat{Q}_{2}\\
\hat{Q}_{2}^{\top} & \hat{Q}_{3}
\end{array}\right]$, and $\pi':=\left[\Pi^{\top},G^{\top},G_{em}^{\top}\right]^{\top}$.
Under Assumption \ref{ass: as3}, we conclude that $\hat{Q}\succ0$.
Define $v_{p}':=\left[V_{p}^{\top},F_{p}^{\top},O_{p}^{\top}\right]^{\top}$
and $v''_{p}:=W_{p}$. Then 
\[
\mathcal{D}_{v_{p'}}\Psi_{p}\left(v_{p},\mathbb{E}^{\tilde{\mathbb{P}}}\left[\Pi\right]\right)=-\mathbb{E}^{\tilde{\mathbb{P}}}\left[\pi'\right]^{\top}-\lambda_{p}\hat{Q}v_{p}'
\]
and
\[
\mathcal{D}_{v_{p''}}\Psi_{p}\left(v_{p},\mathbb{E}^{\tilde{\mathbb{P}}}\left[\Pi\right]\right)=0.
\]
The second derivative $\mathcal{D}_{v_{p'}}^{2}\Psi_{p}\left(v_{p},\mathbb{E}^{\tilde{\mathbb{P}}}\left[\Pi\right]\right)=-\lambda_{p}\hat{Q}\prec0$
since $0<\lambda_{p}<\infty$. Thus, $\Psi_{p}\left(v_{p},\mathbb{E}^{\tilde{\mathbb{P}}}\left[\Pi\right]\right)$,
$p\in P$ are strictly concave in $V_{p}$, $F_{p}$, and $O_{p}$.
The proof for consumers $c\in C$ is similar. \end{proof}

\begin{lemma}\label{prop:Optimal-fuel-trading}Optimal fuel trading
strategies $F_{p}$ and emission trading strategies $O_{p}$ are bounded
for all producers $p\in P$.%
\footnote{Hence in there exists some $M_{1}\in\mathbb{R}$ such that $\left\Vert \left[F_{p}^{\top},O_{p}^{\top}\right]^{\top}\right\Vert \leq M_{1}<\infty$
for all $p\in P$. We set $M_{1}<F_{trade}$.%
} \end{lemma}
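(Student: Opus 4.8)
The plan is to turn the strict concavity of the producer's objective in the trading variables, established in Lemma \ref{prop:uniq_vol}, into coercivity, and then to localise the maximiser by comparing its objective value with that of a cheaply constructed feasible reference point. The key observation is that $\Psi_p$ does not depend on $W_p$ at all: since the $W_p$-blocks of both $\pi_p$ and $Q_p$ vanish, one has $\Psi_p=-\mathbb{E}^{\tilde{\mathbb{P}}}[\pi']^{\top}v_p'-\tfrac{1}{2}\lambda_p(v_p')^{\top}\hat{Q}v_p'$ with $v_p'=[V_p^{\top},F_p^{\top},O_p^{\top}]^{\top}$ and $\hat{Q}\succ0$. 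Writing $\mu>0$ for the smallest eigenvalue of $\hat{Q}$, this gives the quadratic upper bound
\[
\Psi_p(v_p')\le\|\mathbb{E}^{\tilde{\mathbb{P}}}[\pi']\|\,\|v_p'\|-\tfrac{1}{2}\lambda_p\mu\|v_p'\|^{2},
\]
so it remains only to bound $\|\mathbb{E}^{\tilde{\mathbb{P}}}[\pi']\|$ and to produce a lower bound on the optimal value, both uniformly in $F_{trade}$.

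For the price vector, the electricity block $\mathbb{E}^{\tilde{\mathbb{P}}}[\Pi]$ is confined to $[-\Pi_{max},\Pi_{max}]$ by (\ref{eq:m1}), while the fuel and emission blocks $\mathbb{E}^{\tilde{\mathbb{P}}}[G]$ and $\mathbb{E}^{\tilde{\mathbb{P}}}[G_{em}]$ are exogenous with finite second moments, hence finite means; thus $\|\mathbb{E}^{\tilde{\mathbb{P}}}[\pi']\|\le C_\pi$ for a constant $C_\pi$ independent of $F_{trade}$. For the reference point I would fix any feasible $W_p$, which exists by Assumption \ref{ass: as1} and obeys $0\le W_{p,l,r}(T_j)\le\overline{W}_{max}^{p,l,r}$ by (\ref{eq:pb1}), and then satisfy the equalities (\ref{eq:pb2-1}), (\ref{eq:pb3-1}), (\ref{eq:pb4}) by spreading the required totals evenly over the trading times, e.g. $F_{p,l}(t_i,T_j)=|I_j|^{-1}\sum_{r}W_{p,l,r}(T_j)c^{p,l,r}$ and analogously for $O_p$ and $V_p$. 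Since these totals are controlled by $\|W_p\|$, the resulting $v_p'$ has norm at most a constant $C_0$ depending only on the capacities, efficiencies, and emission factors; the point lies in $S_p$ as soon as $F_{trade}$ exceeds the size of its components, and it delivers $\Phi_p\ge-C_1$ with $C_1$ independent of $F_{trade}$.

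Evaluating the upper bound at a maximiser and using $\Phi_p\ge-C_1$ gives $-C_1\le C_\pi\|(v_p')^{*}\|-\tfrac{1}{2}\lambda_p\mu\|(v_p')^{*}\|^{2}$, a scalar quadratic inequality whose solution forces $\|(v_p')^{*}\|\le M$ for an explicit $M=M(C_\pi,C_1,\lambda_p,\mu)$ independent of $F_{trade}$. As $[F_p^{\top},O_p^{\top}]^{\top}$ is a subvector of $v_p'$, this yields $\|[F_p^{\top},O_p^{\top}]^{\top}\|\le M$; taking the maximum of $M$ over the finitely many producers defines $M_1$, and choosing $F_{trade}>M_1$ is self-consistent precisely because $M_1$ never depended on $F_{trade}$.

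I expect the crux to be the reference-point construction of the second paragraph: one must exhibit a feasible point whose fuel and emission components are bounded uniformly in $F_{trade}$, which is possible only because (\ref{eq:pb3-1}) and (\ref{eq:pb4}) constrain merely the sums of these variables — quantities pinned down by the capacity-bounded production — and not their individual entries. Coercivity from $\hat{Q}\succ0$ then finishes the argument, but without a lower bound on the optimal value that is uniform in $F_{trade}$ the coercivity estimate would only confine the maximiser to a region that itself expands with $F_{trade}$, which would not prove the lemma.
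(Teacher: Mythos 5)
Your proposal is correct and follows essentially the same route as the paper: the paper's proof is exactly the coercivity observation that strict concavity of $\Psi_{p}$ in $\left[V_{p}^{\top},F_{p}^{\top},O_{p}^{\top}\right]^{\top}$ (via $\hat{Q}\succ0$ from Lemma \ref{prop:uniq_vol}) together with finite price moments forces $\Psi_{p}\rightarrow-\infty$ as $\left\Vert F_{p}\right\Vert$ or $\left\Vert O_{p}\right\Vert$ grows, which cannot be optimal. You merely supply the details the paper leaves implicit --- the explicit feasible reference point with objective value bounded below, and the check that all constants are uniform in $F_{trade}$ so that setting $M_{1}<F_{trade}$ is not circular --- which is a useful tightening but not a different argument.
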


\begin{proof}Since $\Psi_{p}\left(v_{p},\mathbb{E}^{\tilde{\mathbb{P}}}\left[\Pi\right]\right)$
is strictly concave and quadratic in $F_{p}$ and $O_{p}$ for all
$p\in P$, and fuel and emission prices have finite expectation and
variance, it is clear that $\Psi_{p}\left(v_{p},\mathbb{E}^{\tilde{\mathbb{P}}}\left[\Pi\right]\right)\rightarrow-\infty$
as $\left\Vert F_{p}\right\Vert \rightarrow\infty$ or $\left\Vert O_{p}\right\Vert \rightarrow\infty$.
This is not optimal and thus we conclude that optimal $F_{p}$ and
$O_{p}$ must be bounded. \end{proof}

\begin{lemma}\label{prop:Unbound_vol_unbound_pr}Consider optimization
problem (\ref{eq:opt_prod}) without constraints (\ref{eq:pb2}) for
producers $p\in P$, and optimization problem (\ref{eq:opt_con})
without constraints (\ref{eq:cb1}) for consumers $c\in C$. Denote
by $V_{k}\in\mathbb{R}^{N}$ a vector of optimal volumes of any player
$k\in P\cup C$ for a given vector of expected prices $\mathbb{E}^{\tilde{\mathbb{P}}}\left[\Pi\right]\in\mathbb{R}^{N}$.
Then,
\begin{enumerate}
\item if $\left\Vert \mathbb{E}^{\tilde{\mathbb{P}}}\left[\Pi\right]\right\Vert \rightarrow\infty$,
then $\left[\sum_{k\in P\cup C}V_{k}\right]^{\top}\mathbb{E}^{\tilde{\mathbb{P}}}\left[\Pi\right]\rightarrow-\infty$,
and
\item if for each delivery period $T_{j}$, $j\in J$ there exists at least
one power plant that is not at the upper or the lower boundary, then
$\left\Vert V_{k}\right\Vert \rightarrow\infty$ for all $k\in P\cup C$
simultaneously if and only if $\left\Vert \mathbb{E}^{\tilde{\mathbb{P}}}\left[\Pi\right]\right\Vert \rightarrow\infty$. 
\end{enumerate}
\end{lemma}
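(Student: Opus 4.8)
The plan is to treat each player's program as a parametric quadratic program in the price $\mathbb{E}^{\tilde{\mathbb{P}}}[\Pi]$ and to exploit two structural facts established earlier: by Lemma \ref{prop:uniq_vol} the relevant Hessian block satisfies $\hat{Q}\succ0$ (so in particular $\hat{Q}_1\succ0$), and by Lemma \ref{prop:Optimal-fuel-trading} together with the capacity bound (\ref{eq:pb1}) the quantities $F_p$, $O_p$ and $W_{p,l,r}(T_j)$ stay bounded uniformly in the price. Since (\ref{eq:pb2-1}) and (\ref{eq:cb2}) force $\sum_{i\in I_j}V_p(t_i,T_j)=-\sum_{l\in L}\sum_{r\in R^{p,l}}W_{p,l,r}(T_j)$ and $\sum_{i\in I_j}V_c(t_i,T_j)=p_c D(T_j)$, the net position of every player in each delivery period is bounded; consequently only the component of $V_k$ in the null space of $\hat{A}_1$ (the intra-period spread) can grow. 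First I would decompose the price as $\mathbb{E}^{\tilde{\mathbb{P}}}[\Pi]=u+w$ with $u$ in the null space of $\hat{A}_1$ and $w$ in its row space, so that $w$ is constant on each delivery block with common value $\pi_j$ on block $j$.

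For part (1) I would use the net-position identities to write
\[
\Big[\sum_k V_k\Big]^\top\mathbb{E}^{\tilde{\mathbb{P}}}[\Pi]=u^\top\sum_k V_k+\sum_{j\in J}\pi_j\Big(D(T_j)-\sum_{p\in P}\sum_{l\in L}\sum_{r\in R^{p,l}}W_{p,l,r}(T_j)\Big).
\]
The first summand is controlled by strict concavity: eliminating the bounded variables $F_p$, $O_p$, $W_p$ and solving the resulting $\hat{Q}_1$-strictly-concave program in the spread shows that the intra-period part of each $V_k$ depends on $u$ through $-\tfrac{1}{\lambda_k}P_k u$, where $P_k$ is symmetric positive semidefinite and positive definite on the null space of $\hat{A}_1$, up to a bounded perturbation; hence $u^\top\sum_k V_k\le -c\|u\|^2+O(\|u\|)$. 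The second summand is handled by the production response: as $\pi_j\to+\infty$ every plant in period $j$ is driven to its upper bound, so Lemma \ref{cor:cc1} gives $D(T_j)<\sum_{p,l,r}W_{p,l,r}(T_j)$ and the term tends to $-\infty$; as $\pi_j\to-\infty$ every plant is at its lower bound and Lemma \ref{cor:cc1} yields the reverse inequality, again forcing the term to $-\infty$, while terms with bounded $\pi_j$ remain bounded. Treating the cases ``$\|u\|\to\infty$'' and ``$\|u\|$ bounded, $\|w\|\to\infty$'' separately then gives $\big[\sum_k V_k\big]^\top\mathbb{E}^{\tilde{\mathbb{P}}}[\Pi]\to-\infty$ whenever $\|\mathbb{E}^{\tilde{\mathbb{P}}}[\Pi]\|\to\infty$.

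For part (2), the ``only if'' direction is routine: by Lemma \ref{prop:uniq_vol} each optimizer $V_k$ is unique and, by coercivity of the strictly concave objective uniformly over bounded prices, it depends continuously on $\mathbb{E}^{\tilde{\mathbb{P}}}[\Pi]$, so a bounded price set forces a bounded family of volumes and the contrapositive gives the claim. For the ``if'' direction I would use the free-plant hypothesis precisely to exclude the degenerate direction $w$: if $\|w\|\to\infty$ then some $|\pi_j|\to\infty$, which by the argument above drives every plant in period $j$ to a boundary, contradicting the assumption that each period retains a plant off both boundaries. Hence under this hypothesis $\|\mathbb{E}^{\tilde{\mathbb{P}}}[\Pi]\|\to\infty$ forces $\|u\|\to\infty$, and since $P_k$ is positive definite on the null space of $\hat{A}_1$ the term $-\tfrac{1}{\lambda_k}P_k u$ makes $\|V_k\|\to\infty$ for every $k\in P\cup C$ simultaneously.

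I expect the main obstacle to be the rigorous treatment of the second summand in part (1): translating ``$\pi_j\to\pm\infty$'' into ``every plant in period $j$ sits at a boundary'' requires care, because the ramping constraints (\ref{eq:pb0}) couple adjacent periods and a plant may be pinned at a ramping bound rather than a capacity bound. This is exactly why the boundary in the preceding definition is taken with respect to both (\ref{eq:pb0}) and (\ref{eq:pb1}), and why Lemma \ref{cor:cc1} is stated for plants at any such bound; the remaining work is to make the comparison ``the price term dominates the bounded variance and fuel contributions'' uniform over the admissible sign patterns of $(\pi_j)_{j\in J}$, so that the diverging terms cannot cancel one another.
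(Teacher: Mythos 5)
Your proposal is correct, but it follows a genuinely different route from the paper's proof. The paper works directly with the asymptotics of the KKT system: it isolates the per-period stationarity relation (\ref{eq:mt}), notes that a diverging price forces either the dual variable $\mu_{k,j}$ of the net-position constraint or the volume $V_{k}$ to diverge, and treats the two cases separately (the first via the interpretation of $\mu_{p,j}\rightarrow-\infty$ as pushing every plant to a bound, followed by Lemma \ref{cor:cc1}; the second via $V_{k}^{\top}\hat{Q}_{1}V_{k}\rightarrow\infty$ and optimality). You instead decompose the price orthogonally into its row-space component $w$ (per-period level) and null-space component $u$ (intra-period spread) relative to $\hat{A}_{1}$, use the net-position identities (\ref{eq:pb2-1}) and (\ref{eq:cb2}) to confine the level component of every $V_{k}$ to a bounded set, and extract the explicit quadratic bound $u^{\top}\sum_{k}V_{k}\leq-c\left\Vert u\right\Vert ^{2}+O\left(\left\Vert u\right\Vert \right)$ from the projection matrix that the paper only introduces later, in Lemma \ref{prop:np}. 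Your $w$-component plays exactly the role of the paper's $\mu_{k,j}$ (both are the per-period level shift in the stationarity condition), so the key economic step --- a diverging period level drives every plant to a bound, whence Lemma \ref{cor:cc1} applies --- is common to both arguments, and the caveat you flag about ramping constraints coupling adjacent periods, and about the production gap being bounded away from zero, is present at exactly the same level of rigour in the paper's own proof, so it is not a gap relative to the paper. What your version buys is a cleaner treatment of the aggregate in part (1): the paper's second case establishes $\mathbb{E}^{\tilde{\mathbb{P}}}\left[\Pi\right]^{\top}V_{k}\rightarrow-\infty$ only for those players whose volumes diverge and is silent about possible cancellation against players with bounded volumes, whereas your bound on $u^{\top}\sum_{k}V_{k}$ together with the sign information on the $w$-part controls the sum directly. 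What the paper's version avoids is the explicit parametric representation $V_{k}=-\lambda_{k}^{-1}P_{k}u+O\left(1\right)$, which you assert rather than derive; for producers it requires eliminating the bounded variables $F_{p}$, $O_{p}$, $W_{p}$ (via Lemma \ref{prop:Optimal-fuel-trading} and (\ref{eq:pb1})) and the cross-covariance block $\hat{Q}_{2}$ from the stationarity condition before the identity $P_{k}\hat{A}_{1}^{\top}=0$ can be invoked, so you should spell that elimination out if you write the proof in full.
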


\begin{proof}The necessary and sufficient conditions for $v_{k}$
to be a global maximizer of $\Psi_{k}\left(v_{k},\mathbb{E}^{\tilde{\mathbb{P}}}\left[\Pi\right]\right)$
are, due to Assumption \ref{ass: as1} that implies the Slater condition,
the following 
\begin{equation}
\begin{array}{rcl}
-\mathbb{E}^{\tilde{\mathbb{P}}}\left[\pi_{k}\right]-\lambda_{k}Q_{k}v_{k}-B_{k}^{\top}\eta_{k}-A_{k}^{\top}\mu_{k} & = & 0\\
\\
\left(B_{k}v_{k}-b_{k}\right)^{\top}\eta_{k} & = & 0\\
\\
B_{k}v_{k}-b_{k} & \leq & 0\\
\\
A_{k}v_{k}-a_{k} & = & 0\\
\\
\eta_{k} & \geq & 0.
\end{array}\label{eq:KKTprod}
\end{equation}
We are only interested in decision variables $V_{k}$. By Lemma \ref{prop:Optimal-fuel-trading},
we are allowed to remove constraints (\ref{eq:pb3}) and (\ref{eq:pbb4})
without affecting the optimal solution. Then, after removing the inequality
constraints (\ref{eq:pb2}) for producers $p\in P$, and inequality
constraints (\ref{eq:cb1}) for consumers $c\in C$, there is no inequality
constraints that involve variable $V_{k}$. Moreover, there is only
one equality constraint, i.e. (\ref{eq:pb2-1}) for producers and
(\ref{eq:cb2}) for consumers, for each delivery period $j\in J$
that involve variable $V_{k}$. Considering the first equation of
(\ref{eq:KKTprod}) for each delivery period $j\in J$ separately,
and neglecting all bounded terms, we obtain the following equivalence
\begin{equation}
\mathbb{E}^{\tilde{\mathbb{P}}}\left[\Pi\left(T_{j}\right)\right]+\lambda_{k}\hat{Q}_{1}^{j}V_{k}+\mu_{k,j}1\sim0\label{eq:mt}
\end{equation}
were $\mu_{k,j}\in\mathbb{R}$ is the dual variable of the equality
constraint (\ref{eq:pb2-1}) for producers and (\ref{eq:cb2}) for
consumers, $1\in\mathbb{R}^{\left|I_{j}\right|}$ is a vector of ones,
and $\hat{Q}_{1}^{j}\in\mathbb{R}^{\left|I_{j}\right|\times N}$ contains
only those rows of $\hat{Q}_{1}$ that correspond to $\mathbb{E}^{\tilde{\mathbb{P}}}\left[\Pi\left(T_{j}\right)\right]$
in (\ref{eq:KKTprod}). In the calculation of (\ref{eq:mt}) we took
into account the construction of the equilibrium price process in
Section \ref{sub:Construction-of-price}, and use the finding that
all variances and correlations are finite and can not be affected
by the hypothetical market agent. Therefore, $\left\Vert Q_{k}\right\Vert <\infty$.
Moreover, $\left\Vert \left[F_{p}^{\top},O_{p}^{\top}\right]^{\top}\right\Vert <\infty$
due to Lemma \ref{prop:Optimal-fuel-trading}.

Assume that $\left\Vert \mathbb{E}^{\tilde{\mathbb{P}}}\left[\Pi\left(T_{j}\right)\right]\right\Vert \rightarrow\infty$.
Then by (\ref{eq:mt}), $\left\Vert \mu_{k,j}\right\Vert \rightarrow\infty$
or $\left\Vert V_{k}\right\Vert \rightarrow\infty$.
\begin{enumerate}
\item Assume first that $\left\Vert V_{k}\right\Vert <\infty$ for all $k\in P\cup C$.
Let $\mathbb{E}^{\tilde{\mathbb{P}}}\left[\Pi\left(t_{i},T_{j}\right)\right]\rightarrow\infty$
for some $i\in I_{j}$ and $j\in J$. Then $\mu_{k,j}\rightarrow-\infty$.
Since $\left\Vert V_{k}\left(T_{j}\right)\right\Vert <\infty$, it
follows from (\ref{eq:mt}) that $\mathbb{E}^{\tilde{\mathbb{P}}}\left[\Pi\left(T_{j}\right)\right]\rightarrow\infty$
componentwise and all components of $\mathbb{E}^{\tilde{\mathbb{P}}}\left[\Pi\left(T_{j}\right)\right]$
must be equal up to a constant. From (\ref{eq:pb2-1}) and the general
interpretation of the dual variables, we can see that as $\mu_{p,j}\rightarrow-\infty$,
$p\in P$, a small increase in $\sum_{l\in L}\sum_{r\in R^{p,l}}W_{p,l,r}\left(T_{j}\right)$
would infinitely improve the objective function $\Psi_{p}\left(v_{p},\mathbb{E}^{\tilde{\mathbb{P}}}\left[\Pi\right]\right)$.
As governed by (\ref{eq:pb3-1}) and (\ref{eq:pb4}), an increase
in $\sum_{l\in L}\sum_{r\in R^{p,l}}W_{p,l,r}\left(T_{j}\right)$
would also require that more fuel and emission certificates are bought.
Since the fuel and emission prices have a finite expectation and variance,
a decrease of the objective function due to the change in fuel and
emission buying strategy as $\mu_{p,j}\rightarrow-\infty$ remains
bounded. Thus, if $\mu_{p,j}\rightarrow-\infty$, the sum of production
of all power plants $\sum_{l\in L}\sum_{r\in R^{p,l}}W_{p,l,r}\left(T_{j}\right)$
of producer $p\in P$ increases as much as allowed by constraints
(\ref{eq:pb0}) and (\ref{eq:pb1}). Since all producers share the
same electricity, fuel, and emission prices, this holds for all producers
$p\in P$ simultaneously. Then by Lemma \ref{cor:cc1}, $0>\sum_{c\in C}\sum_{i\in I_{j}}V_{c}\left(t_{i},T_{j}\right)+\sum_{p\in P}\sum_{i\in I_{j}}V_{p}\left(t_{i},T_{j}\right)$
and thus $\left[\sum_{k\in P\cup C}V_{k}\left(T_{j}\right)\right]^{\top}\mathbb{E}^{\tilde{\mathbb{P}}}\left[\Pi\left(T_{j}\right)\right]\rightarrow-\infty$.
On the other hand, if $\left\Vert \mathbb{E}^{\tilde{\mathbb{P}}}\left[\Pi\left(T_{j}\right)\right]\right\Vert <\infty$
then also $\left|\left[\sum_{k\in P\cup C}V_{k}\left(T_{j}\right)\right]^{\top}\mathbb{E}^{\tilde{\mathbb{P}}}\left[\Pi\left(T_{j}\right)\right]\right|<\infty$.
Therefore, we can conclude that $\left[\sum_{k\in P\cup C}V_{k}\right]^{\top}\mathbb{E}^{\tilde{\mathbb{P}}}\left[\Pi\right]\rightarrow-\infty$. 
\item Assume now that $\left\Vert V_{k}\right\Vert \rightarrow\infty$ for
at least one $k\in P\cup C$. Then, $V_{k}^{\top}\hat{Q}_{1}V_{k}\rightarrow\infty$
since $\hat{Q}_{1}\succ0$. Assume now that $\mathbb{E}^{\tilde{\mathbb{P}}}\left[\Pi\right]^{\top}V_{k}>-\infty$.
Then $\Psi_{k}\left(v_{k},\mathbb{E}^{\mathbb{\tilde{P}}}\left[\Pi\right]\right)\rightarrow-\infty$,
which is clearly not optimal for player $k$. Thus, $\mathbb{E}^{\tilde{\mathbb{P}}}\left[\Pi\right]^{\top}V_{k}\rightarrow-\infty$,
which concludes the proof of point 1.
\end{enumerate}
We continue with the proof of point 2. Assume that for fixed $j\in J$
not all power plants are at the upper or lower bound simultaneously.
We have already seen above that if $\left|\mu_{k,j}\right|\rightarrow\infty$
for any $k\in P\cup C$ and $j\in J$ then, at delivery time $T_{j}$,
all power plants are at the upper or lower bound simultaneously. Thus,
$\left|\mu_{k,j}\right|<\infty$ for all $k\in P\cup C$ and $j\in J$.
Rewriting (\ref{eq:mt}) without focusing on one delivery period only
and taking any norm, we get
\begin{equation}
\left\Vert \mathbb{E}^{\tilde{\mathbb{P}}}\left[\Pi\right]\right\Vert \sim\left\Vert \lambda_{k}\hat{Q}_{1}V_{k}\right\Vert .\label{eq:mt-1}
\end{equation}
Since $\left|\lambda_{k}\right|<\infty$ and $\left\Vert \hat{Q}_{1}\right\Vert <\infty$,
it immediately follows from (\ref{eq:mt-1}) that if $\left\Vert \mathbb{E}^{\tilde{\mathbb{P}}}\left[\Pi\left(T_{j}\right)\right]\right\Vert \rightarrow\infty$
then $\left\Vert V_{k}\right\Vert \rightarrow\infty$. Because all
producers and consumers share the same price this must hold for all
of them simultaneously. Since $\hat{Q}_{1}$ is invertible, we can
write (\ref{eq:mt}) for all delivery periods together as
\begin{equation}
\begin{array}{rcl}
\left\Vert V_{k}\right\Vert  & \sim & \left\Vert \lambda_{k}^{-1}\hat{Q}_{1}\mathbb{E}^{\tilde{\mathbb{P}}}\left[\Pi\right]\right\Vert \end{array}.\label{eq:mt2}
\end{equation}
Because $\left|\lambda_{k}^{-1}\right|<\infty$ and $\left\Vert \hat{Q}_{1}^{-1}\right\Vert <\infty$,
it follows from (\ref{eq:mt2}) that if $\left\Vert V_{k}\right\Vert \rightarrow\infty$
then $\left\Vert \mathbb{E}^{\tilde{\mathbb{P}}}\left[\Pi\right]\right\Vert \rightarrow\infty$.
\end{proof}

\begin{lemma}\label{prop:bounded_prices}Expected prices $\mathbb{E}^{\tilde{\mathbb{P}}}\left[\Pi\right]$
in a NE are bounded.%
\footnote{Hence there exists some $M_{2}\in\mathbb{R}$ such that $\left\Vert \mathbb{E}^{\tilde{\mathbb{P}}}\left[\Pi\left(t_{i},T_{j}\right)\right]\right\Vert \leq M_{2}<\infty$
for all $j\in J$ and $i\in I_{j}$. We set $M_{2}<\Pi_{max}$.%
} \end{lemma}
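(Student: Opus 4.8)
The plan is to exploit the fact that the hypothetical market agent's objective $\Psi_M$ is \emph{linear} in the price, so that its equilibrium optimality condition forces the scalar $\left[\sum_{k\in P\cup C}V_k\right]^{\top}\mathbb{E}^{\tilde{\mathbb{P}}}\left[\Pi\right]$ to be nonnegative, whereas Lemma \ref{prop:Unbound_vol_unbound_pr} tells us that exactly this quantity diverges to $-\infty$ as soon as the price norm diverges. The contradiction between these two facts confines the equilibrium price to a bounded region, and the resulting radius serves as $M_2$.

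First I would record the consequence of the market agent's equilibrium condition (\ref{eq:market}). Since $\Pi_{max}>0$ we have $0\in S_M$, and evaluating the (linear) objective at the zero price gives $\Psi_M\left(0,v^{*}\right)=0$. Optimality of $\mathbb{E}^{\tilde{\mathbb{P}}}\left[\Pi\right]^{*}$ over $S_M$ therefore yields
\[
\Big[\sum_{k\in P\cup C}V_k^{*}\Big]^{\top}\mathbb{E}^{\tilde{\mathbb{P}}}\left[\Pi\right]^{*}=\Psi_M\left(\mathbb{E}^{\tilde{\mathbb{P}}}\left[\Pi\right]^{*},v^{*}\right)\geq\Psi_M\left(0,v^{*}\right)=0.
\]
Thus the market agent's equilibrium value is nonnegative, and crucially this bound uses only $0\in S_M$ and the linearity of $\Psi_M$, so it is insensitive to how large $\Pi_{max}$ is taken.

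Next I would invoke Lemma \ref{prop:Unbound_vol_unbound_pr}, point 1. By the producer and consumer equilibrium conditions of Definition \ref{Nash-Equilibrium-(NE)}, the equilibrium volumes $V_k^{*}$ are precisely the optimal volumes attached to the price $\mathbb{E}^{\tilde{\mathbb{P}}}\left[\Pi\right]^{*}$ to which that lemma applies, so $\left[\sum_{k}V_k\right]^{\top}\mathbb{E}^{\tilde{\mathbb{P}}}\left[\Pi\right]\to-\infty$ whenever $\left\Vert \mathbb{E}^{\tilde{\mathbb{P}}}\left[\Pi\right]\right\Vert \to\infty$. Equivalently, there is a finite radius $R$, depending only on the problem data, such that $\left[\sum_{k}V_k\right]^{\top}\mathbb{E}^{\tilde{\mathbb{P}}}\left[\Pi\right]<0$ whenever $\left\Vert \mathbb{E}^{\tilde{\mathbb{P}}}\left[\Pi\right]\right\Vert >R$. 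Combined with the nonnegativity above, this forces $\left\Vert \mathbb{E}^{\tilde{\mathbb{P}}}\left[\Pi\right]^{*}\right\Vert \leq R$ at every Nash equilibrium; I would then set $M_2:=R$ and, as in the footnote, take $\Pi_{max}>M_2$. Note that no sequence or continuity argument is needed: the two facts combine directly into a single a priori bound.

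The main obstacle is reconciling the domains of the two inputs. Lemma \ref{prop:Unbound_vol_unbound_pr} is stated for the problems with the volume bounds (\ref{eq:pb2}) and (\ref{eq:cb1}) \emph{removed}, while the Nash equilibrium lives in the bounded problem. I would resolve this by observing that the radius $R$ is intrinsic: it is produced by the strictly concave quadratic utilities (Lemma \ref{prop:uniq_vol}) together with the boundedness of optimal fuel and emission positions (Lemma \ref{prop:Optimal-fuel-trading}) and the capacity-driven sign of the clearing sum (Lemma \ref{cor:cc1}), and it never refers to $V_{trade}$. Hence the price bound $M_2$ is legitimately established first, and only afterwards is $V_{trade}$ chosen large enough (Corollary \ref{prop:bounded_volumes}) to render the volume constraints slack, so that the equilibrium volumes indeed coincide with the unconstrained best responses used here. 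This ordering avoids the circularity of simultaneously bounding prices and volumes.
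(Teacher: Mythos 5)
Your proof is correct and follows essentially the same route as the paper: both arguments invoke point 1 of Lemma \ref{prop:Unbound_vol_unbound_pr} to show that a diverging price drives $\left[\sum_{k\in P\cup C}V_{k}\right]^{\top}\mathbb{E}^{\tilde{\mathbb{P}}}\left[\Pi\right]$, and hence $\Psi_{M}$, to $-\infty$, contradicting the market agent's optimality in the NE. Your version merely makes explicit what the paper leaves implicit — the comparison point $0\in S_{M}$ giving $\Psi_{M}\geq 0$, and the ordering by which $M_{2}$ is fixed before $\Pi_{max}$ and $V_{trade}$ — which is a welcome clarification but not a different proof.
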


\begin{proof}Assume that $\left\Vert \mathbb{E}^{\tilde{\mathbb{P}}}\left[\Pi\right]\right\Vert \rightarrow\infty$.
Then by Lemma \ref{prop:Unbound_vol_unbound_pr}, $\left[\sum_{k\in P\cup C}V_{k}\right]^{\top}\mathbb{E}^{\tilde{\mathbb{P}}}\left[\Pi\right]\rightarrow-\infty$.
Inserting that to (\ref{eq:utility_market_mx}), we get $\Psi_{M}\left(\mathbb{E}^{\tilde{\mathbb{P}}}\left[\Pi\right],V\right)=-\infty$,
which is clearly not a NE. Thus, $\left\Vert \mathbb{E}^{\tilde{\mathbb{P}}}\left[\Pi\right]\right\Vert <\infty$.
\end{proof}

\begin{proposition}\label{prop:NE-CErelation}A NE, if it exists,
is a CE and vice versa. \end{proposition}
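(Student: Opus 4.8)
The plan is to exploit the fact that Definitions \ref{Competitive-Equilibrium-(CE)} and \ref{Nash-Equilibrium-(NE)} share their first two conditions verbatim --- producer optimality (\ref{eq:CE_producers}) and consumer optimality (\ref{eq:CE_consumers}) at the common price $\mathbb{E}^{\tilde{\mathbb{P}}}[\Pi]^*$ --- so that the whole proposition collapses to showing that, once these two conditions are imposed, the third NE condition (price optimality for the hypothetical market agent, (\ref{eq:market})) is equivalent to the third CE condition (market clearing, (\ref{eq:vol})). I would therefore fix a pair $(v^*, \mathbb{E}^{\tilde{\mathbb{P}}}[\Pi]^*)$ satisfying conditions 1 and 2, abbreviate the aggregate net volume by $w^* := \sum_{c\in C} V_c^* + \sum_{p\in P} V_p^*$, and record from (\ref{eq:utility_market_mx}) that the agent's objective is the linear form $\Psi_M(\mathbb{E}^{\tilde{\mathbb{P}}}[\Pi], v^*) = \mathbb{E}^{\tilde{\mathbb{P}}}[\Pi]^\top w^*$ maximized over the box $S_M$.

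For the direction CE $\Rightarrow$ NE I expect an immediate argument: market clearing is precisely the statement $w^* = 0$, whence $\Psi_M(\mathbb{E}^{\tilde{\mathbb{P}}}[\Pi], v^*) \equiv 0$ on all of $S_M$; this constant objective is trivially maximized at $\mathbb{E}^{\tilde{\mathbb{P}}}[\Pi]^*$, so (\ref{eq:market}) holds and the pair is a NE.

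The converse, NE $\Rightarrow$ CE, is the step I expect to carry the real weight, and it is here that I would invoke Lemma \ref{prop:bounded_prices}. Because $(v^*, \mathbb{E}^{\tilde{\mathbb{P}}}[\Pi]^*)$ is a NE, that lemma bounds the optimal price componentwise by $M_2 < \Pi_{max}$, so $\mathbb{E}^{\tilde{\mathbb{P}}}[\Pi]^*$ lies strictly in the interior of $S_M$. A linear functional attains its maximum over a box at an interior point only if its coefficient vector vanishes: were some component $w^*_{(i,j)}$ nonzero, perturbing $\mathbb{E}^{\tilde{\mathbb{P}}}[\Pi(t_i,T_j)]$ by a small step in the direction $\mathrm{sign}(w^*_{(i,j)})$ would stay feasible by interiority and strictly raise $\Psi_M$, contradicting (\ref{eq:market}). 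Hence $w^* = 0$, which is exactly (\ref{eq:vol}), and the pair is a CE. I would also note the equivalent KKT reading via (\ref{eq:KKT_hyp_mark}): interiority forces $\eta_M = 0$ through complementary slackness, and the stationarity row then reduces to $w^* = B_M^\top \eta_M = 0$.

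The crux is thus the strict interiority of the equilibrium price delivered by Lemma \ref{prop:bounded_prices}; without it the agent could sit at a vertex of $S_M$ with a nonzero gradient and market clearing would fail. The remaining verifications --- that conditions 1 and 2 transfer unchanged in both directions, and that an identically zero objective is maximized at every feasible price --- are routine, and I would dispatch each in a single sentence.
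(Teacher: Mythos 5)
Your proposal is correct and follows essentially the same route as the paper: both reduce the claim to the equivalence of the third conditions, dispose of CE $\Rightarrow$ NE by noting the agent's objective is identically zero under market clearing, and derive NE $\Rightarrow$ CE from Lemma \ref{prop:bounded_prices}. Your phrasing of the converse (a linear functional maximized at an interior point of the box $S_M$ must have vanishing coefficient vector) is in fact a cleaner rendering of the paper's slightly loose statement that a clearing mismatch would send the optimal price to infinity.
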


\begin{proof}The equilibrium conditions for producers and consumers
are the same for both the NE and the CE. The remaining part is to
show that Point 3 from the CE implies Point 3 from the NE and conversely.
\begin{enumerate}
\item Assume the CE holds. Then (\ref{eq:vol}) can be inserted into (\ref{eq:market-1}
- \ref{eq:SO}). Then $\Psi_{M}\left(\mathbb{E}^{\tilde{\mathbb{P}}}\left[\Pi\right],V\right)=0$
for all $\mathbb{E}^{\mathbb{P}}\left[\Pi\right]$ and thus (\ref{eq:market})
is satisfied.
\item Assume the NE holds. Assume further that (\ref{eq:vol}) does not
hold for some $t_{i}$ and $T_{j}$, $i\in I_{j}$, $j\in J$. Then
according to (\ref{eq:SO}) the optimal $\left\Vert \mathbb{E}^{\tilde{\mathbb{P}}}\left[\Pi\left(t_{i},T_{j}\right)\right]\right\Vert \rightarrow\infty$.
This contradicts Lemma \ref{prop:bounded_prices} and hence such a
solution can not be a NE. Thus (\ref{eq:vol}) holds for all $t_{i}$
and $T_{j}$, $i\in I_{j}$, $j\in J$. 
\end{enumerate}
\end{proof}

\begin{corollary}\label{prop:bounded_volumes}The number of forward
contracts $V_{k}$ in a NE is bounded for all $k\in P\cup C$.%
\footnote{Hence there exists some $M_{3}\in\mathbb{R}$ such that $\left\Vert V_{k}\left(t_{i},T_{j}\right)\right\Vert \leq M_{3}<\infty$
for $k\in P\cup C$ and for all $j\in J$ and $i\in I_{j}$. We set
$M_{3}<V_{trade}$.%
} \end{corollary}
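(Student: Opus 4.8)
The plan is to read the statement as the converse half of Lemma~\ref{prop:Unbound_vol_unbound_pr}: its point~2 asserts that, under a mild nondegeneracy hypothesis on the production side, the equilibrium volumes $V_k$ diverge if and only if the expected prices $\mathbb{E}^{\tilde{\mathbb{P}}}[\Pi]$ diverge. Since Lemma~\ref{prop:bounded_prices} already guarantees that $\|\mathbb{E}^{\tilde{\mathbb{P}}}[\Pi]\|<\infty$ at any NE, it suffices to verify that the nondegeneracy hypothesis holds at a NE; boundedness of every $V_k$ by some common constant $M_3$ then follows at once, and we simply set $V_{trade}>M_3$ so that the artificial bounds (\ref{eq:pb2}) and (\ref{eq:cb1}) are never active.

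To verify the hypothesis I would first transport the market-clearing relation to the NE. By Proposition~\ref{prop:NE-CErelation} any NE is a CE, so (\ref{eq:vol}) holds; summing it over $i\in I_j$ and inserting (\ref{eq:ml2-1})--(\ref{eq:ml1-2}) shows that total production equals demand $D(T_j)$ in every delivery period $j\in J$. The contrapositive of Lemma~\ref{cor:cc1} then rules out the two extreme configurations: because the market-clearing sum is exactly $0$ rather than strictly negative or strictly positive, it is impossible, for any $j$, that all power plants sit at their upper bound simultaneously or that all sit at their lower bound simultaneously.

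The main obstacle is the gap between what Lemma~\ref{cor:cc1} excludes and the hypothesis literally needed for point~2 of Lemma~\ref{prop:Unbound_vol_unbound_pr}, namely the existence of a strictly interior plant in each period: a naive reading still leaves open ``mixed'' saturated states in which every plant is pinned to one of its bounds, but not all to the same one. I would close this gap at the level of the stationarity condition (\ref{eq:mt}). With $\mathbb{E}^{\tilde{\mathbb{P}}}[\Pi]$ bounded, (\ref{eq:mt}) forces $\|V_k\|\to\infty$ only if the multiplier $|\mu_{k,j}|\to\infty$; and, by the dual-variable interpretation used in the proof of that lemma, a divergent multiplier drives \emph{every} plant of every producer to one common bound---the upper bound when $\mu_{k,j}\to-\infty$ and the lower bound when $\mu_{k,j}\to+\infty$---since all producers face the same prices. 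Hence a mixed saturated state keeps all multipliers finite, while the only states producing a divergent multiplier are the all-upper and all-lower configurations already excluded above. With every $\mu_{k,j}$ finite and prices bounded, solving (\ref{eq:mt}) for $V_k$ and using $\hat{Q}_1\succ0$ (so that $\hat{Q}_1^{-1}$ is bounded) yields the uniform bound $M_3$, completing the argument.
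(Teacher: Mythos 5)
Your proof is correct and follows essentially the same route as the paper's: the paper's own two-line argument assumes $\left\Vert V_{k}\right\Vert \rightarrow\infty$, invokes Proposition \ref{prop:NE-CErelation} to obtain the market-clearing condition (\ref{eq:vol}), applies Lemma \ref{prop:Unbound_vol_unbound_pr} to conclude $\left\Vert \mathbb{E}^{\tilde{\mathbb{P}}}\left[\Pi\right]\right\Vert \rightarrow\infty$, and contradicts Lemma \ref{prop:bounded_prices}. Your extra care in verifying the hypothesis of point 2 of that lemma --- ruling out mixed saturated states via the multipliers in (\ref{eq:mt}) --- goes beyond the paper, which cites the lemma without explicitly checking that hypothesis.
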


\begin{proof}Assume that there exists a player $k\in P\cup C$ such
that $\left\Vert V_{k}\right\Vert \rightarrow\infty$. By Proposition
\ref{prop:NE-CErelation} every NE is also a CE and thus (\ref{eq:vol})
holds. Thus, we can use Lemma \ref{prop:Unbound_vol_unbound_pr} and
conclude that $\left\Vert \mathbb{E}^{\tilde{\mathbb{P}}}\left[\Pi\right]\right\Vert \rightarrow\infty$.
This contradicts Lemma \ref{prop:bounded_prices}. Thus, $\left\Vert V_{k}\right\Vert <\infty$
for all $k\in P\cup C$. \end{proof}

Denote by $\bar{S}_{M}$ the set of prices for which power plants
are either all at the upper or all at the lower bound simultaneously
for at least one delivery period $T_{j}$ $j\in J$. By Lemma \ref{cor:cc1},
we know that the optimal price $\mathbb{E}^{\tilde{\mathbb{P}}}\left[\Pi\right]\notin\bar{S}_{M}$.

\begin{lemma}\label{prop:uniq_vol-1}Given an expected price vector
$\mathbb{E}^{\tilde{\mathbb{P}}}\left[\Pi\right]\in\mathbb{R}^{N}\backslash\bar{S}_{M}$,
the decision vectors $V_{p}$ for all $p\in P$ and the decision vectors
$V_{c}$ for all $c\in C$, are unique.\end{lemma}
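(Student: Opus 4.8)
The plan is to obtain uniqueness of the electricity trading vectors directly from the strict concavity recorded in Lemma \ref{prop:uniq_vol}, exploiting that each player's subproblem is a concave program over a convex feasible set. Throughout I fix the price vector $\mathbb{E}^{\tilde{\mathbb{P}}}\left[\Pi\right]\in\mathbb{R}^{N}\backslash\bar{S}_{M}$; the restriction to the complement of $\bar{S}_{M}$ is what guarantees, via Lemma \ref{prop:Unbound_vol_unbound_pr} together with Corollary \ref{prop:bounded_volumes}, that the optimal volumes are finite and attained, so that the maximizers I compare below genuinely exist. (Convexity of $S_{p}$ and $S_{c}$ is immediate since both are cut out by affine equalities and non-strict affine inequalities.)

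I would treat the consumers first, as that case is immediate. Since $Q_{c}=\hat{Q}_{1}$ and Assumption \ref{ass: as3} forces $\hat{Q}_{1}\succ0$, the objective $\Psi_{c}\left(\cdot,\mathbb{E}^{\tilde{\mathbb{P}}}\left[\Pi\right]\right)$ is strictly concave on the convex set $S_{c}$. A strictly concave function attains its maximum over a convex set at a unique point, so $V_{c}$ is unique for every $c\in C$.

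The producer case is where care is needed, and this is the main obstacle. The objective $\Psi_{p}$ is strictly concave only in the block $v_{p}':=\left[V_{p}^{\top},F_{p}^{\top},O_{p}^{\top}\right]^{\top}$ and is entirely independent of the production block $W_{p}$, because both $Q_{p}$ and $\pi_{p}$ vanish on the $W_{p}$ coordinates. Consequently strict concavity alone cannot pin down the full vector $v_{p}$, and one should not attempt to argue that $v_{p}$ itself is unique (the production schedule $W_{p}$ may well fail to be). The key observation is that it suffices to pin down the $v_{p}'$ block, and this the strict concavity does achieve. Suppose $v_{p}^{(1)},v_{p}^{(2)}\in S_{p}$ are both maximizers whose $v_{p}'$ blocks differ. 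Since $S_{p}$ is convex, the midpoint $\tfrac{1}{2}\left(v_{p}^{(1)}+v_{p}^{(2)}\right)$ is feasible; and because $\Psi_{p}$ depends on $v_{p}$ only through $v_{p}'$ and is strictly concave in that block by Lemma \ref{prop:uniq_vol}, the objective value at the midpoint strictly exceeds the common optimal value, a contradiction. Hence the $v_{p}'$ blocks of any two maximizers coincide, and in particular $V_{p}$ is unique for every $p\in P$, which completes the argument.

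I expect no genuine calculation to be required; the entire subtlety is the conceptual point that strict concavity confined to a sub-block of the decision vector yields uniqueness of exactly that sub-block (here $V_{p}$, as well as $F_{p}$ and $O_{p}$), even though the complementary variables $W_{p}$ need not be determined. The role of the hypothesis $\mathbb{E}^{\tilde{\mathbb{P}}}\left[\Pi\right]\notin\bar{S}_{M}$ is purely to secure finiteness and attainment of the optima being compared, not the uniqueness mechanism itself.
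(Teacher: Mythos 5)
Your proof is correct, and it reaches the conclusion by a genuinely different and more elementary route than the paper. The paper argues dually: it writes down the full KKT systems for two putative optima $\tilde{v}_p$ and $\hat{v}_p$, pairs the two stationarity conditions with $\pm\left(\hat{v}_{p}-\tilde{v}_{p}\right)$, and uses complementary slackness and feasibility to cancel the multiplier terms, so that the strict-monotonicity inequality (\ref{eq:ineqQ-1}) (which is the gradient form of the same strict concavity you invoke) yields $0>0$. You instead argue primally: the midpoint of two feasible optima is feasible by convexity of $S_{p}$, and since $\Psi_{p}$ depends on $v_{p}$ only through the block $v_{p}'=\left[V_{p}^{\top},F_{p}^{\top},O_{p}^{\top}\right]^{\top}$ and is strictly concave there (Lemma \ref{prop:uniq_vol}), the midpoint would be strictly better if the $v_{p}'$ blocks differed. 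Your version is shorter, needs no constraint qualification (the paper's route relies on the Slater condition from Assumption \ref{ass: as1} to make the KKT conditions necessary), and correctly isolates the conceptual point that strict concavity on a sub-block determines exactly that sub-block while leaving $W_{p}$ possibly non-unique; the paper's KKT formulation, on the other hand, sets up the dual machinery that is reused in the subsequent sensitivity computations (e.g.\ the derivation of $\partial V_{c}/\partial\mathbb{E}^{\tilde{\mathbb{P}}}\left[\Pi\right]$ in Lemma \ref{prop:np}). Two small remarks: attainment of the maxima follows already from compactness of $S_{p}$ and $S_{c}$ (the trade bounds and capacity constraints), so your appeal to Lemma \ref{prop:Unbound_vol_unbound_pr} and Corollary \ref{prop:bounded_volumes} for this purpose is unnecessary and slightly misplaced (that corollary concerns volumes in a NE, not best responses to an arbitrary price); and, as you correctly suspect, the restriction to $\mathbb{R}^{N}\backslash\bar{S}_{M}$ is not actually load-bearing for the uniqueness mechanism in either proof.
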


\begin{proof}The objective function of each producer $p\in P$ can
be written as 
\[
\Psi_{p}\left(v_{p},\mathbb{E}^{\tilde{\mathbb{P}}}\left[\Pi\right]\right)=-\mathbb{E}^{\tilde{\mathbb{P}}}\left[\pi_{p}\right]^{\top}v_{p}-\frac{1}{2}\lambda_{p}v_{p}^{\top}Q_{p}v_{p}.
\]
Define $v_{p}':=V_{p}$ and $v''_{p}:=\left[F_{p}^{\top},O_{p}^{\top},W_{p}^{\top}\right]^{\top}$.
Then 
\[
\mathcal{D}_{v_{p}}\Psi_{p}\left(v_{p},\mathbb{E}^{\tilde{\mathbb{P}}}\left[\Pi\right]\right)=\left[\mathcal{D}_{v_{p'}}\Psi_{p}\left(v_{p},\mathbb{E}^{\tilde{\mathbb{P}}}\left[\Pi\right]\right)^{\top},\mathcal{D}_{v_{p''}}\Psi_{p}\left(v_{p},\mathbb{E}^{\tilde{\mathbb{P}}}\left[\Pi\right]\right)^{\top}\right]^{\top}.
\]
Due to the strict concavity of the expected utility functions in $v_{p}'$,
for any $\hat{v}_{p}:=\left[\hat{v}_{p}'^{\top},\hat{v}_{p}''^{\top}\right]^{\top}$
and $\tilde{v}_{p}:=\left[\tilde{v}_{p}'^{\top},\tilde{v}_{p}''^{\top}\right]^{\top}$
with $\hat{v}_{p}'\neq\tilde{v}_{p}'$ the following strict inequality
holds 
\begin{equation}
\left(\hat{v}_{p}-\tilde{v}_{p}\right)^{\top}\mathcal{D}_{\tilde{v}_{p}}\Psi_{p}\left(\tilde{v}_{p},\mathbb{E}^{\tilde{\mathbb{P}}}\left[\Pi\right]\right)+\left(\tilde{v}_{p}-\hat{v}_{p}\right)^{\top}\mathcal{D}_{\hat{v}_{p}}\Psi_{p}\left(\hat{v}_{p},\mathbb{E}^{\tilde{\mathbb{P}}}\left[\Pi\right]\right)>0.\label{eq:ineqQ-1}
\end{equation}

We will continue with a proof by contradiction. Assume that there
exist $\hat{v}_{p}'\neq\tilde{v}_{p}'$ that are both optimal solutions
for player $p\in P$ given the electricity price $\mathbb{E}^{\tilde{\mathbb{P}}}\left[\Pi\right]$.
Then both must satisfy the KKT conditions, i.e. 
\begin{equation}
\begin{array}{rcl}
\mathcal{D}_{\tilde{v}_{p}}\Psi_{p}\left(\tilde{v}_{p},\mathbb{E}^{\tilde{\mathbb{P}}}\left[\Pi\right]\right)-B_{p}^{\top}\tilde{\eta}_{p}-A_{p}^{\top}\tilde{\mu}_{p} & = & 0\\
\\
\tilde{\eta}_{p}^{\top}\left(B_{p}\tilde{v}_{p}-b_{p}\right) & = & 0\\
\\
B_{p}\tilde{v}_{p}-b_{p} & \leq & 0\\
\\
A_{p}\tilde{v}_{p}-a_{p} & = & 0\\
\\
\tilde{\eta}_{p} & \geq & 0
\end{array}\label{eq:KKT-dir1-1}
\end{equation}
and
\begin{equation}
\begin{array}{rcl}
\mathcal{D}_{\hat{v}_{p}}\Psi_{p}\left(\hat{v}_{p},\mathbb{E}^{\tilde{\mathbb{P}}}\left[\Pi\right]\right)-B_{p}^{\top}\hat{\eta}_{p}-A_{p}^{\top}\hat{\mu}_{p} & = & 0\\
\\
\hat{\eta}_{p}^{\top}\left(B_{p}\hat{v}_{p}-b_{p}\right) & = & 0\\
\\
B_{p}\hat{v}_{p}-b_{p} & \leq & 0\\
\\
A_{p}\hat{v}_{p}-a_{p} & = & 0\\
\\
\hat{\eta}_{p} & \geq & 0.
\end{array}\label{eq:KKT-dir2-1}
\end{equation}
Multiplying the first equation of (\ref{eq:KKT-dir1-1}) and (\ref{eq:KKT-dir2-1})
by $\left(\hat{v}_{p}-\tilde{v}_{p}\right)^{\top}$ and $\left(\tilde{v}_{p}-\hat{v}_{p}\right)^{\top}$,
respectively and summing them up, the following strict inequality
\begin{equation}
\begin{array}{rcl}
0 & = & \left(\hat{v}_{p}-\tilde{v}_{p}\right)^{\top}\mathcal{D}_{\tilde{v}_{p}}\Psi_{p}\left(\tilde{v}_{p},\mathbb{E}^{\tilde{\mathbb{P}}}\left[\Pi\right]\right)+\left(\tilde{v}_{p}-\hat{v}_{p}\right)^{\top}\mathcal{D}_{\hat{v}_{p}}\Psi_{p}\left(\hat{v}_{p},\mathbb{E}^{\tilde{\mathbb{P}}}\left[\Pi\right]\right)\\
\\
 &  & -\left(\hat{v}_{p}-\tilde{v}_{p}\right)^{\top}B_{p}^{\top}\tilde{\eta}_{p}-\left(\tilde{v}_{p}-\hat{v}_{p}\right)^{\top}B_{p}^{\top}\hat{\eta}_{p}\\
\\
 &  & -\left(\hat{v}_{p}-\tilde{v}_{p}\right)^{\top}A_{p}^{\top}\tilde{\mu}_{p}^{\top}-\left(\tilde{v}_{p}-\hat{v}_{p}\right)^{\top}A_{p}^{\top}\hat{\mu}_{p}^{\top}\\
\\
 & > & -\left(\hat{v}_{p}-\tilde{v}_{p}\right)^{\top}B_{p}^{\top}\tilde{\eta}_{p}-\left(\tilde{v}_{p}-\hat{v}_{p}\right)^{\top}B_{p}^{\top}\hat{\eta}_{p}\\
\\
 &  & -\left(\hat{v}_{p}-\tilde{v}_{p}\right)^{\top}A_{p}^{\top}\tilde{\mu}_{p}-\left(\tilde{v}_{p}-\hat{v}_{p}\right)^{\top}A_{p}^{\top}\hat{\mu}_{p}
\end{array}\label{eq:m1-2-1-1}
\end{equation}
is obtained by (\ref{eq:ineqQ-1}).

Rewriting 
\[
\begin{array}{rcl}
\left(\hat{v}_{p}-\tilde{v}_{p}\right)^{\top}B_{p}^{\top}\tilde{\eta}_{p} & = & \left(B_{p}\hat{v}_{p}-B_{p}\tilde{v}_{p}+b_{p}-b_{p}\right)^{\top}\tilde{\eta}_{p}\\
\\
 & = & \left(B_{p}\hat{v}_{p}-b_{p}\right)^{\top}\tilde{\eta}_{p}-\left(B_{p}\tilde{v}_{p}-b_{p}\right)^{\top}\tilde{\eta}_{p}\\
\\
 & = & \left(B_{p}\hat{v}_{p}-b_{p}\right)^{\top}\tilde{\eta}_{p}
\end{array}
\]
and noting $\tilde{\eta}_{p}\geq0$ and $B_{p}\hat{v}_{p}-b_{p}\leq0$,
we can conclude that 
\begin{equation}
\left(\hat{v}_{p}-\tilde{v}_{p}\right)^{\top}B_{p}^{\top}\tilde{\eta}_{p}\leq0.\label{eq:mm1}
\end{equation}
Due to the symmetry also
\begin{equation}
\left(\tilde{v}_{p}-\hat{v}_{p}\right)^{\top}B_{p}^{\top}\hat{\eta}_{p}\leq0.\label{eq:mm2}
\end{equation}
Rewriting
\begin{equation}
\begin{array}{rcl}
\left(\hat{v}_{p}-\tilde{v}_{p}\right)^{\top}A_{p}^{\top}\tilde{\mu}_{p} & = & \left(A_{p}\hat{v}_{p}-A_{p}\tilde{v}_{p}+a_{p}-a_{p}\right)^{\top}\tilde{\mu}_{p}\\
\\
 & = & \left(A_{p}\hat{v}_{p}-a_{p}\right)^{\top}\tilde{\mu}_{p}-\left(A_{p}\tilde{v}_{p}-a_{p}\right)^{\top}\tilde{\mu}_{p}\\
\\
 & = & 0
\end{array}\label{eq:mm3}
\end{equation}
and due to the symmetry also 
\begin{equation}
\left(\tilde{v}_{p}-\hat{v}_{p}\right)^{\top}A_{p}^{\top}\hat{\mu}_{p}=0.\label{eq:mm4}
\end{equation}
Inserting (\ref{eq:mm1}), (\ref{eq:mm2}), (\ref{eq:mm3}), and (\ref{eq:mm4})
back to (\ref{eq:m1-2-1-1}) gives a contradiction. Hence $\hat{v}_{p}'=\tilde{v}_{p}'$.
The proof for consumers $c\in C$ is similar. \end{proof}

Before we continue with a further analysis of our problem, let us
introduce some definitions that are useful for an analysis of piecewise
differential functions. 

\begin{definition}(\cite{ralph1997sensitivity}) A continuous function
$\tilde{\mathcal{Z}}:S_{1}\rightarrow\mathbb{R}^{N}$ defined on an
open set $S_{1}\subseteq\mathbb{R}^{N}$ is $\mathbb{PC}^{r}$ if
for every $x\in S_{1}$ there exists a finite family of $\mathbb{C}^{r}$-functions
$\tilde{\mathcal{Z}}^{i}:S_{2}\rightarrow\mathbb{R}^{N}$, where $S_{2}\subseteq S_{1}$
is an open neighborhood of $x$ and $i\in\mathcal{I}$, such that
$\tilde{\mathcal{Z}}\left(z\right)\in\left\{ \tilde{\mathcal{Z}}^{i}\left(z\right):i\in\mathcal{I}\right\} $
for every $z\in S_{2}$. The $\mathbb{C}^{r}$-functions $\tilde{\mathcal{Z}}^{i}$,
$i\in\mathcal{I}$, are called selection functions of $\tilde{\mathcal{Z}}$
at $x$. 

A selection function $\tilde{\mathcal{Z}}^{i}$ of a $\mathbb{PC}^{r}$-
function $\tilde{\mathcal{Z}}$ at $x$ is essentially active if
\begin{equation}
x\in\text{cl}\,\text{int}\left\{ z\in S_{2}:\tilde{\mathcal{Z}}^{i}\left(z\right)=\tilde{\mathcal{Z}}\left(z\right)\right\} .
\end{equation}

If $\left\{ \tilde{\mathcal{Z}}^{i}:i\in\mathcal{I}\right\} $ is
a family of selection functions for $\tilde{\mathcal{Z}}$ at $x$,
then the set of indices $i$ of essentially active selection functions
$\tilde{\mathcal{Z}}^{i}$ at $x$ is denoted $\mathcal{I}^{e}\left(x\right)$.
\end{definition}

\begin{definition}Let $\tilde{\mathcal{Z}}:S_{1}\rightarrow\mathbb{R}^{N}$
be a continuous $\mathbb{PC}^{1}$ function defined on an open set
$S_{1}\subseteq\mathbb{R}^{N}$. Clarke's generalized Jacobian (\cite{clarke1990optimization})
of $\tilde{\mathcal{Z}}$ at $x$ is defined as
\begin{equation}
\mathcal{D^{C}}\tilde{\mathcal{Z}}\left(x\right)=\text{conv}\left\{ \mathcal{D}\tilde{\mathcal{Z}}^{i}\left(x\right):i\in\mathcal{I}^{e}\left(x\right)\right\} \label{eq:clarke}
\end{equation}
where $\text{conv}$ denotes a convex hull. \end{definition}

\begin{proposition}\label{prop:PD}$\mathcal{D}\tilde{\mathcal{Z}}^{i}\left(x\right)\prec0$
for all $i\in\mathcal{I}^{e}\left(x\right)$ and all $x\in S_{1}$
if and only if $\mathcal{J}\left(x\right)\prec0$ for all $\mathcal{J}\left(x\right)\in\mathcal{D^{C}}\tilde{\mathcal{Z}}\left(x\right)$.
\end{proposition}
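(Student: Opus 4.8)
The plan is to recognize that this is a purely pointwise assertion: for each fixed $x\in S_{1}$, both sides concern the finite family of matrices $\left\{ \mathcal{D}\tilde{\mathcal{Z}}^{i}(x):i\in\mathcal{I}^{e}(x)\right\}$ and its convex hull $\mathcal{D^{C}}\tilde{\mathcal{Z}}(x)$ defined in (\ref{eq:clarke}). The whole statement then reduces to the elementary observation that the set of matrices $A$ with $w^{\top}Aw<0$ for every $w\neq0$ (the meaning of $A\prec0$ for a possibly nonsymmetric $A$) is a convex cone. I would fix $x$ throughout and treat the two implications separately, concluding with the arbitrariness of $x$.

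For the reverse implication I would note that, by (\ref{eq:clarke}), each generator $\mathcal{D}\tilde{\mathcal{Z}}^{i}(x)$ with $i\in\mathcal{I}^{e}(x)$ is itself a point of the convex hull $\mathcal{D^{C}}\tilde{\mathcal{Z}}(x)$, obtained by placing unit weight on the index $i$. Hence if every $\mathcal{J}(x)\in\mathcal{D^{C}}\tilde{\mathcal{Z}}(x)$ satisfies $\mathcal{J}(x)\prec0$, then in particular each generator does. This direction is immediate.

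For the forward implication, suppose each generator satisfies $\mathcal{D}\tilde{\mathcal{Z}}^{i}(x)\prec0$. An arbitrary $\mathcal{J}(x)\in\mathcal{D^{C}}\tilde{\mathcal{Z}}(x)$ may, by the definition of the convex hull, be written as $\mathcal{J}(x)=\sum_{i\in\mathcal{I}^{e}(x)}\lambda_{i}\mathcal{D}\tilde{\mathcal{Z}}^{i}(x)$ with $\lambda_{i}\geq0$ and $\sum_{i}\lambda_{i}=1$; this representation is a \emph{finite} sum because the $\mathbb{PC}^{1}$ structure guarantees that $\mathcal{I}^{e}(x)$ is finite. Then for any $w\neq0$,
\[
w^{\top}\mathcal{J}(x)w=\sum_{i\in\mathcal{I}^{e}(x)}\lambda_{i}\,w^{\top}\mathcal{D}\tilde{\mathcal{Z}}^{i}(x)w,
\]
and since every summand with $\lambda_{i}>0$ is strictly negative while the remaining summands vanish, and at least one weight is positive because the weights sum to one, we obtain $w^{\top}\mathcal{J}(x)w<0$, that is $\mathcal{J}(x)\prec0$. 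As $x\in S_{1}$ was arbitrary, the equivalence holds for all $x$.

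There is no real obstacle here: the proposition is essentially the convexity of the negative-definite cone combined with the fact that a finite set is contained in its own convex hull. The only points that deserve care are interpreting $\prec0$ through the quadratic form $w^{\top}Aw$, so that the argument applies to nonsymmetric Jacobians (equivalently, one may pass to symmetric parts, which is compatible with convex combinations), and invoking the finiteness of $\mathcal{I}^{e}(x)$ so that every element of the Clarke Jacobian genuinely admits a finite convex representation.
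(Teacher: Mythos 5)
Your proof is correct and takes essentially the same route as the paper, which also writes an arbitrary $\mathcal{J}\left(x\right)\in\mathcal{D^{C}}\tilde{\mathcal{Z}}\left(x\right)$ as a convex combination $\sum_{i\in\mathcal{I}^{e}\left(x\right)}\alpha_{i}\mathcal{D}\tilde{\mathcal{Z}}^{i}\left(x\right)$ and then declares the result immediate. You merely spell out the two directions (generators lie in their own convex hull; a convex combination of matrices with strictly negative quadratic forms again has a strictly negative quadratic form) that the paper leaves as ``trivial.''
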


\begin{proof}This follows directly from (\ref{eq:clarke}). For any
$\mathcal{J}\left(x\right)\in\mathcal{D^{C}}\tilde{\mathcal{Z}}\left(x\right)$
and $x\in S_{1}$, $\mathcal{J}\left(x\right)$ can be written as
\begin{equation}
\mathcal{J}\left(x\right)=\sum_{i\in\mathcal{I}^{e}\left(x\right)}\alpha_{i}\mathcal{D}\tilde{\mathcal{Z}}^{i}\left(x\right)\label{eq:J(x)}
\end{equation}
for some $\alpha_{i}\geq0$ such that $\sum_{i\in\mathcal{I}^{e}\left(x\right)}\alpha_{i}=1$.
The result then follows trivially. \end{proof}

\begin{definition}A function $\tilde{\mathcal{Z}}:S_{1}\rightarrow\mathbb{R}^{N}$
defined on an open set $S_{1}\subseteq\mathbb{R}^{N}$ is strictly
decreasing on $S_{1}$ if
\begin{equation}
\left(\tilde{\mathcal{Z}}\left(x\right)-\tilde{\mathcal{Z}}\left(y\right)\right)^{\top}\left(x-y\right)<0
\end{equation}
for all $x,y\in S_{1}$ such that $x\neq y$.\end{definition}

\begin{proposition}\label{prop:monotone}Let $\tilde{\mathcal{Z}}:S_{1}\rightarrow\mathbb{R}^{N}$
be a $\mathbb{PC}^{1}$ function defined on an open set $S_{1}\subseteq\mathbb{R}^{N}$.
If $\mathcal{D}\tilde{\mathcal{Z}}^{i}\left(x\right)\prec0$ for all
$i\in\mathcal{I}^{e}\left(x\right)$ and $x\in S_{1}$, then $\tilde{\mathcal{Z}}$
is strictly decreasing on $S_{1}$. \end{proposition}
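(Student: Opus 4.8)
The plan is to reduce the vector-valued monotonicity inequality to a scalar sign condition and then to apply a nonsmooth mean value theorem along the segment joining the two points. Fix $x,y\in S_{1}$ with $x\neq y$, and suppose for the moment that the segment $[y,x]$ is contained in $S_{1}$ (which is the case on the convex domain to which the result is applied). Introduce the scalar, locally Lipschitz function $\phi\left(z\right):=\left(x-y\right)^{\top}\tilde{\mathcal{Z}}\left(z\right)$, so that the quantity whose sign we must determine is exactly $\phi\left(x\right)-\phi\left(y\right)=\left(\tilde{\mathcal{Z}}\left(x\right)-\tilde{\mathcal{Z}}\left(y\right)\right)^{\top}\left(x-y\right)$. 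Since $\tilde{\mathcal{Z}}$ is $\mathbb{PC}^{1}$ it is locally Lipschitz, hence so is $\phi$, and Clarke's calculus is available.

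First I would invoke Lebourg's mean value theorem (\cite{clarke1990optimization}) for the locally Lipschitz scalar function $\phi$: there exist a point $\xi$ in the open segment $\left(y,x\right)$ and a generalized gradient element $\zeta\in\partial\phi\left(\xi\right)$ such that $\phi\left(x\right)-\phi\left(y\right)=\zeta^{\top}\left(x-y\right)$. Next I would use the chain rule relating the generalized gradient of $\phi=\left(x-y\right)^{\top}\tilde{\mathcal{Z}}$ to Clarke's generalized Jacobian of $\tilde{\mathcal{Z}}$, namely $\partial\phi\left(\xi\right)\subseteq\left\{ \mathcal{J}^{\top}\left(x-y\right):\mathcal{J}\in\mathcal{D^{C}}\tilde{\mathcal{Z}}\left(\xi\right)\right\}$, so that $\zeta=\mathcal{J}^{\top}\left(x-y\right)$ for some $\mathcal{J}\in\mathcal{D^{C}}\tilde{\mathcal{Z}}\left(\xi\right)$. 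Substituting yields $\phi\left(x\right)-\phi\left(y\right)=\left(x-y\right)^{\top}\mathcal{J}\left(x-y\right)$.

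To finish, I would apply Proposition \ref{prop:PD}: the hypothesis $\mathcal{D}\tilde{\mathcal{Z}}^{i}\left(\xi\right)\prec0$ for every essentially active selection index $i\in\mathcal{I}^{e}\left(\xi\right)$ guarantees that $\mathcal{J}\prec0$ for every $\mathcal{J}\in\mathcal{D^{C}}\tilde{\mathcal{Z}}\left(\xi\right)$. Because $x\neq y$, negative definiteness then gives $\left(x-y\right)^{\top}\mathcal{J}\left(x-y\right)<0$, whence $\left(\tilde{\mathcal{Z}}\left(x\right)-\tilde{\mathcal{Z}}\left(y\right)\right)^{\top}\left(x-y\right)<0$, which is precisely the definition of strict decrease.

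The main obstacle I anticipate is nonsmooth bookkeeping rather than any hard estimate: one must legitimately pass from the selection-function hypothesis to negative definiteness of \emph{every} element of the generalized Jacobian (this is exactly Proposition \ref{prop:PD}) and correctly invoke the chain rule $\partial\left(\left(x-y\right)^{\top}\tilde{\mathcal{Z}}\right)\left(\xi\right)\subseteq\mathcal{D^{C}}\tilde{\mathcal{Z}}\left(\xi\right)^{\top}\left(x-y\right)$, which holds only as an inclusion in general but suffices here. A more self-contained alternative avoids Lebourg's theorem altogether: restrict $\tilde{\mathcal{Z}}$ to the segment via $g\left(t\right):=\left(x-y\right)^{\top}\tilde{\mathcal{Z}}\left(y+t\left(x-y\right)\right)$, observe that $g$ is a one-dimensional $\mathbb{PC}^{1}$ function and hence $\mathbb{C}^{1}$ off a finite set of $t\in\left[0,1\right]$, note that at each point of differentiability $g'\left(t\right)=\left(x-y\right)^{\top}\mathcal{D}\tilde{\mathcal{Z}}^{i}\left(y+t\left(x-y\right)\right)\left(x-y\right)<0$ for an essentially active index $i$, and integrate to get $g\left(1\right)-g\left(0\right)=\int_{0}^{1}g'\left(t\right)\,dt<0$; there the only delicate point is identifying the one-variable essentially active selections of $g$ with the restrictions of the essentially active selections of $\tilde{\mathcal{Z}}$.
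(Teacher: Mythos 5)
Your proof is correct and follows essentially the same route as the paper's: a nonsmooth mean value theorem along the segment joining $x$ and $y$, combined with Proposition \ref{prop:PD} to pass from the hypothesis on the essentially active selections to negative definiteness of every element of $\mathcal{D^{C}}\tilde{\mathcal{Z}}$. The only real difference is that you scalarize first via $\phi\left(z\right)=\left(x-y\right)^{\top}\tilde{\mathcal{Z}}\left(z\right)$ and invoke Lebourg's scalar mean value theorem plus a chain-rule inclusion, whereas the paper applies a vector-valued extended mean value theorem to $\tilde{\mathcal{Z}}$ directly and then multiplies by $\left(x-y\right)^{\top}$; both arguments implicitly require the segment $\left[y,x\right]$ to lie in $S_{1}$, a point you flag explicitly and the paper leaves tacit.
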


\begin{proof}Assume that $\mathcal{D}\tilde{\mathcal{Z}}^{i}\left(x\right)\prec0$
for all $i\in\mathcal{I}^{e}\left(x\right)$. Then by Proposition
\ref{prop:PD} also $\mathcal{J}\left(x\right)\prec0$ for all $\mathcal{J}\left(x\right)\in\mathcal{D^{C}}\tilde{\mathcal{Z}}\left(x\right)$.
By the extended mean-value theorem (see \cite{jeyakumar1998approximate})
\begin{equation}
\tilde{\mathcal{Z}}\left(x\right)-\tilde{\mathcal{Z}}\left(y\right)=\mathcal{J}\left(y+\delta\left(x-y\right)\right)\left(x-y\right)\label{eq:mn1-1}
\end{equation}
for some $\mathcal{J}\left(y+\delta\left(x-y\right)\right)\in\mathcal{D^{C}}\tilde{\mathcal{Z}}\left(y+\delta\left(x-y\right)\right)$
and $\delta\in\left(0,1\right)$. By multiplying (\ref{eq:mn1-1})
from the left by $\left(x-y\right)^{\top}$ and using the assumption
of the negative definiteness of $\mathcal{J}\left(x\right)$, we conclude
\begin{equation}
\left(\tilde{\mathcal{Z}}\left(x\right)-\tilde{\mathcal{Z}}\left(y\right)\right)^{\top}\left(x-y\right)<0.
\end{equation}
\end{proof}

\begin{lemma}\label{prop:trans1-1}Let $k\in P\cup C$. There exists
a $\mathbb{PC}^{\infty}$ mapping $\tilde{\mathcal{Z}}_{k}:\mathbb{R}^{N}\backslash\bar{S}_{M}\rightarrow\mathbb{R}^{N}$
that maps the electricity price vector $\mathbb{E}^{\tilde{\mathbb{P}}}\left[\Pi\right]\in\mathbb{R}^{N}\backslash\bar{S}_{M}$
to a volume vector $V_{k}\in\mathbb{R}^{N}$. \end{lemma}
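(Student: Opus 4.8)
The plan is to realise $\tilde{\mathcal{Z}}_{k}$ as the optimal-solution map of a parametric quadratic program whose parameter is the price vector $\mathbb{E}^{\tilde{\mathbb{P}}}[\Pi]$, and to read off the $\mathbb{PC}^{\infty}$ property from the active-set structure of the associated KKT system. First I would observe that for each fixed $\theta:=\mathbb{E}^{\tilde{\mathbb{P}}}[\Pi]\in\mathbb{R}^{N}\backslash\bar{S}_{M}$ the feasible set $S_{k}$ is non-empty by Assumption \ref{ass: as1}, and is convex and compact, so player $k$'s concave program attains its maximum; by Lemma \ref{prop:uniq_vol-1} the optimal volume component $V_{k}$ is unique. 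Hence the assignment $\theta\mapsto V_{k}$ defines a single-valued map $\tilde{\mathcal{Z}}_{k}$ on the open set $\mathbb{R}^{N}\backslash\bar{S}_{M}$, and it remains only to establish its piecewise smoothness.

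Next I would exploit that $\theta$ enters the objective only through the linear term $-\mathbb{E}^{\tilde{\mathbb{P}}}[\pi_{k}]^{\top}v_{k}$, and there affinely (indeed linearly) through the $\Pi$-block, while $Q_{k}$, $\lambda_{k}$, $A_{k}$, $B_{k}$ are all independent of $\theta$ (recall that by \eqref{eq:prodQ} and \eqref{eq:conQ} the covariance matrices cannot be altered by the hypothetical agent). For a fixed guess of the active inequality constraints $\mathcal{A}\subseteq\{1,\dots,n_{k}\}$, the KKT system \eqref{eq:KKTprod} collapses to the linear system formed by the stationarity equation, the equality constraints $A_{k}v_{k}=a_{k}$, and the active rows $(B_{k}v_{k})_{\mathcal{A}}=(b_{k})_{\mathcal{A}}$, with the inactive multipliers set to zero. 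Written as a single linear system in $(v_{k},\mu_{k},(\eta_{k})_{\mathcal{A}})$, the right-hand side depends affinely on $\theta$ while the coefficient matrix is constant in $\theta$. Since $\Psi_{k}$ is strictly concave in the $V_{k}$-directions (Lemma \ref{prop:uniq_vol}) and the optimal $V_{k}$ is pinned down uniquely, this reduced system determines the $V_{k}$-component as an affine, hence $\mathbb{C}^{\infty}$, function $\tilde{\mathcal{Z}}_{k}^{\mathcal{A}}(\theta)$ on the parameter region for which $\mathcal{A}$ is realised. As there are only finitely many subsets $\mathcal{A}$, this yields a finite family of $\mathbb{C}^{\infty}$ selection functions.

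To conclude that $\tilde{\mathcal{Z}}_{k}$ is $\mathbb{PC}^{\infty}$ in the sense of \cite{ralph1997sensitivity}, two facts remain. First, for every $\theta$ the value $\tilde{\mathcal{Z}}_{k}(\theta)$ equals $\tilde{\mathcal{Z}}_{k}^{\mathcal{A}}(\theta)$ for the active set $\mathcal{A}$ attained at the optimum, so locally $\tilde{\mathcal{Z}}_{k}$ selects from the finite family $\{\tilde{\mathcal{Z}}_{k}^{\mathcal{A}}\}$. Second, $\tilde{\mathcal{Z}}_{k}$ is continuous: the maximiser of a strictly concave objective over the fixed compact convex set $S_{k}$ depends continuously on the parameter (Berge's maximum theorem, using the uniqueness of Lemma \ref{prop:uniq_vol-1}), so no jump occurs where the active set switches. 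Continuity together with a finite family of $\mathbb{C}^{\infty}$ selection functions is exactly the definition of a $\mathbb{PC}^{\infty}$ function, proving the claim. The step I expect to be the main obstacle is the second paragraph: verifying that the reduced KKT system determines the $V_{k}$-block despite $Q_{k}$ being only positive semidefinite (strictly positive definite merely in the $(V,F,O)$-directions), and handling degenerate parameter values where several selection functions are essentially active. Both are resolved by restricting attention to the uniquely determined $V_{k}$-component and by noting, via Lemma \ref{prop:uniq_vol-1}, that all essentially active selection functions must agree at such boundary parameters, which is precisely what continuity supplies.
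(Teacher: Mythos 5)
Your proof is correct and follows essentially the same route as the paper: well-definedness of $\tilde{\mathcal{Z}}_{k}$ from the uniqueness in Lemma \ref{prop:uniq_vol-1}, and the $\mathbb{PC}^{\infty}$ property from the continuous, piecewise-affine dependence of the optimal $V_{k}$ on the price parameter in a parametric quadratic program. The only difference is that the paper outsources the latter fact to the parametric-QP literature (\cite{boot1963onsensitivity}, \cite{berkelaar1996sensitivity}) while you reprove it via the active-set/KKT argument plus Berge's theorem; your handling of the semidefiniteness of $Q_{k}$ is the right idea, though the clean justification is that any two solutions of the reduced KKT system for a fixed active set differ by an element of $N\left(Q_{k}\right)$, which has zero $\left(V_{k},F_{k},O_{k}\right)$-component because $\hat{Q}\succ0$.
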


\begin{proof}Since the proof for producers and consumers is almost
the same we will explicitly write it only for producers. Set $p=k$
for some $p\in P$. The optimization problem for a producer $p$ can
be written as
\[
\begin{array}{rl}
\underset{v_{p}\in S_{p}}{\text{max}} & \Psi_{p}\left(v_{p},\mathbb{E}^{\tilde{\mathbb{P}}}\left[\Pi\right]\right).\end{array}
\]
For convenience we define $\tilde{\Psi}_{p}:\mathbb{R}^{N}\times\mathbb{R}^{N}\rightarrow\mathbb{R}$
as $\tilde{\Psi}_{p}\left(V_{p},\mathbb{E}^{\tilde{\mathbb{P}}}\left[\Pi\right]\right):=\underset{F_{p},O_{p},W_{p}}{\text{max}}\Psi_{p}\left(v_{p},\mathbb{E}^{\tilde{\mathbb{P}}}\left[\Pi\right]\right)$
subject to (\ref{eq:pb0}), (\ref{eq:pb1}), (\ref{eq:pb2-1}), (\ref{eq:pb3-1}),
(\ref{eq:pb4}), (\ref{eq:pb3}), and (\ref{eq:pbb4}). Using Lemma
\ref{prop:uniq_vol-1} we know that volumes $V_{p}$ are unique for
a given price $\mathbb{E}^{\tilde{\mathbb{P}}}\left[\Pi\right]\in\mathbb{R}^{N}\backslash\bar{S}_{M}$.
Thus there exists a mapping $\tilde{\mathcal{Z}}_{p}:\mathbb{R}^{N}\backslash\bar{S}_{M}\rightarrow\mathbb{R}^{N}$
such that $V_{p}=\tilde{\mathcal{Z}}_{p}\left(\mathbb{E}^{\tilde{\mathbb{P}}}\left[\Pi\right]\right)$.
Results from the parametric quadratic programming show that $\tilde{\mathcal{Z}}_{p}\left(\mathbb{E}^{\tilde{\mathbb{P}}}\left[\Pi\right]\right)$
is a continuous and piecewise affine function (and thus $\mathbb{PC}^{\infty}$),
see \cite{boot1963onsensitivity} for a strictly convex quadratic
objective function and \cite{berkelaar1996sensitivity} for a convex
quadratic objective function. \end{proof}

\begin{theorem}\label{thm:uniq} Define a $\mathbb{PC}^{\infty}$
mapping $\tilde{\mathcal{Z}}\left(\mathbb{E}^{\tilde{\mathbb{P}}}\left[\Pi\right]\right):=\sum_{p\in P}\tilde{\mathcal{Z}}_{p}\left(\mathbb{E}^{\tilde{\mathbb{P}}}\left[\Pi\right]\right)+\sum_{c\in C}\tilde{\mathcal{Z}}_{c}\left(\mathbb{E}^{\tilde{\mathbb{P}}}\left[\Pi\right]\right)$.
If $\mathcal{D}\tilde{\mathcal{Z}}^{i}\left(x\right)\prec0$ for all
$i\in\mathcal{I}^{e}\left(x\right)$ and $x\in\mathbb{R}^{N}\backslash\bar{S}_{M}$,
then electricity prices $\mathbb{E}^{\tilde{\mathbb{P}}}\left[\Pi\right]$
in the NE are unique. \end{theorem}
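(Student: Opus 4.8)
The plan is to characterize the equilibrium prices as the zeros of the aggregate response map $\tilde{\mathcal{Z}}$, and then to rule out multiple zeros by strict monotonicity. First I would observe that, by Proposition \ref{prop:NE-CErelation}, any NE is also a CE, so the market clearing condition (\ref{eq:vol}) holds at every equilibrium price. Since $V_{k}=\tilde{\mathcal{Z}}_{k}\left(\mathbb{E}^{\tilde{\mathbb{P}}}\left[\Pi\right]\right)$ for each $k\in P\cup C$ by Lemma \ref{prop:trans1-1}, the aggregate map satisfies $\tilde{\mathcal{Z}}\left(\mathbb{E}^{\tilde{\mathbb{P}}}\left[\Pi\right]\right)=\sum_{k\in P\cup C}V_{k}$, and the $N$ scalar equations (\ref{eq:vol}), one for each pair $\left(t_{i},T_{j}\right)$, are precisely the statement that every component of this vector vanishes. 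Hence (\ref{eq:vol}) is equivalent to $\tilde{\mathcal{Z}}\left(\mathbb{E}^{\tilde{\mathbb{P}}}\left[\Pi\right]\right)=0$, so every NE price is a zero of $\tilde{\mathcal{Z}}$. Moreover, by the remark following Lemma \ref{cor:cc1}, any equilibrium price necessarily lies in $\mathbb{R}^{N}\backslash\bar{S}_{M}$, the open set on which $\tilde{\mathcal{Z}}$ is defined, so it suffices to show that $\tilde{\mathcal{Z}}$ has at most one zero there.

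Next I would invoke the monotonicity machinery developed above. The map $\tilde{\mathcal{Z}}$, being a finite sum of the $\mathbb{PC}^{\infty}$ maps $\tilde{\mathcal{Z}}_{k}$ from Lemma \ref{prop:trans1-1}, is itself $\mathbb{PC}^{\infty}$, as asserted in the statement. Under the hypothesis $\mathcal{D}\tilde{\mathcal{Z}}^{i}\left(x\right)\prec0$ for all $i\in\mathcal{I}^{e}\left(x\right)$ and all $x\in\mathbb{R}^{N}\backslash\bar{S}_{M}$, Proposition \ref{prop:monotone} applies directly to $\tilde{\mathcal{Z}}$ and yields that $\tilde{\mathcal{Z}}$ is strictly decreasing on $\mathbb{R}^{N}\backslash\bar{S}_{M}$; that is, $\left(\tilde{\mathcal{Z}}\left(x\right)-\tilde{\mathcal{Z}}\left(y\right)\right)^{\top}\left(x-y\right)<0$ for all distinct $x,y$ in this set.

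Finally I would deduce injectivity from strict monotonicity: if $\tilde{\mathcal{Z}}\left(x\right)=\tilde{\mathcal{Z}}\left(y\right)$ for some $x\neq y$ in $\mathbb{R}^{N}\backslash\bar{S}_{M}$, then $\left(\tilde{\mathcal{Z}}\left(x\right)-\tilde{\mathcal{Z}}\left(y\right)\right)^{\top}\left(x-y\right)=0$, contradicting the strict inequality just established. Hence $\tilde{\mathcal{Z}}$ is injective on $\mathbb{R}^{N}\backslash\bar{S}_{M}$, so it admits at most one zero there. Combining this with the characterization from the first paragraph, the NE price $\mathbb{E}^{\tilde{\mathbb{P}}}\left[\Pi\right]^{*}$, whose existence is guaranteed by Corollary \ref{cor:exist}, is unique.

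The analytic content is entirely front-loaded into the hypothesis and into Propositions \ref{prop:PD} and \ref{prop:monotone}, so the argument proper is essentially bookkeeping: injectivity of a strictly decreasing map is a one-line consequence. The step that demands genuine care is the equilibrium characterization in the first paragraph, namely verifying that the market clearing constraint is exactly equivalent to $\tilde{\mathcal{Z}}=0$ and that equilibria never land in the degenerate set $\bar{S}_{M}$, where $\tilde{\mathcal{Z}}$ is undefined and the uniqueness argument would break down; this is precisely what Proposition \ref{prop:NE-CErelation} and Lemma \ref{cor:cc1} were set up to supply.
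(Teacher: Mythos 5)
Your proposal is correct and follows essentially the same route as the paper's own proof: reduce the equilibrium condition to $\tilde{\mathcal{Z}}\left(\mathbb{E}^{\tilde{\mathbb{P}}}\left[\Pi\right]\right)=0$ via Proposition \ref{prop:NE-CErelation} and Lemma \ref{prop:trans1-1}, apply Proposition \ref{prop:monotone} to get strict monotonicity, and combine injectivity with the existence result of Corollary \ref{cor:exist}. If anything, you are slightly more careful than the paper, which asserts that $\tilde{\mathcal{Z}}$ is ``bijective'' when only injectivity follows from (and is needed for) the strict-decrease property, and you make explicit the step that equilibria avoid $\bar{S}_{M}$, which the paper only records in the remark preceding Lemma \ref{prop:uniq_vol-1}.
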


\begin{proof}The market condition (\ref{eq:vol}) requires that
\[
0=\sum_{c\in C}V_{c}+\sum_{p\in P}V_{p}.
\]
Using Lemma \ref{prop:trans1-1} this can be written as
\begin{equation}
\begin{array}{rcl}
0 & = & \sum_{p\in P}V_{p}+\sum_{c\in C}V_{c}\\
\\
 & = & \sum_{p\in P}\tilde{\mathcal{Z}}_{p}\left(\mathbb{E}^{\tilde{\mathbb{P}}}\left[\Pi\right]\right)+\sum_{c\in C}\tilde{\mathcal{Z}}_{c}\left(\mathbb{E}^{\tilde{\mathbb{P}}}\left[\Pi\right]\right)\\
\\
 & = & \tilde{\mathcal{Z}}\left(\mathbb{E}^{\tilde{\mathbb{P}}}\left[\Pi\right]\right).
\end{array}\label{eq:mk-1}
\end{equation}
If $\mathcal{D}\tilde{\mathcal{Z}}^{i}\left(x\right)\prec0$ for all
$i\in\mathcal{I}^{e}\left(x\right)$ and $x\in\mathbb{R}^{N}\backslash\bar{S}_{M}$,
then by Proposition \ref{prop:monotone} $\tilde{\mathcal{Z}}$ is
strictly decreasing on $\mathbb{R}^{N}\backslash\bar{S}_{M}$. Thus,
the mapping $\tilde{\mathcal{Z}}$ must be bijective. By Corollary
\ref{cor:exist}, we know that the solution to the problem $\tilde{\mathcal{Z}}\left(\mathbb{E}^{\tilde{\mathbb{P}}}\left[\Pi\right]\right)=0$
exists and from the bijectivity of $\tilde{\mathcal{Z}}$, we conclude
that it must be unique. \end{proof}

The remaining problem of this section is to show that $\mathcal{D}\tilde{\mathcal{Z}}^{i}\left(\mathbb{E}^{\tilde{\mathbb{P}}}\left[\Pi\right]\right)\prec0$
for all $i\in\mathcal{I}^{e}\left(\mathbb{E}^{\tilde{\mathbb{P}}}\left[\Pi\right]\right)$
and $\mathbb{E}^{\tilde{\mathbb{P}}}\left[\Pi\right]\in\mathbb{R}^{N}\backslash\bar{S}_{M}$
indeed holds. From the proof of Lemma \ref{prop:trans1-1}, we know
that $\tilde{\mathcal{Z}}^{i}\left(\mathbb{E}^{\tilde{\mathbb{P}}}\left[\Pi\right]\right)$,
$i\in\mathcal{I}$ are all affine functions and thus $\mathcal{D}\tilde{\mathcal{Z}}^{i}\left(\mathbb{E}^{\tilde{\mathbb{P}}}\left[\Pi\right]\right)$
are all constants. Therefore, it is enough to show that $\mathcal{D}\tilde{\mathcal{Z}}^{i}\left(\mathbb{E}^{\tilde{\mathbb{P}}}\left[\Pi\right]\right)\prec0$
for all $i\in\mathcal{I}^{e}\left(\mathbb{E}^{\tilde{\mathbb{P}}}\left[\Pi\right]\right)$
and $\mathbb{E}^{\tilde{\mathbb{P}}}\left[\Pi\right]\in\left\{ x:x\in\mathbb{R}^{N}\backslash\bar{S}_{M}\wedge\left|\mathcal{I}^{e}\left(x\right)\right|=1\right\} $.
For such prices $\mathbb{E}^{\tilde{\mathbb{P}}}\left[\Pi\right]$,
$\mathcal{D}\tilde{\mathcal{Z}}_{k}\left(\mathbb{E}^{\tilde{\mathbb{P}}}\left[\Pi\right]\right)$
exists for all players $k\in P\cup C$. In the rest of this section
we simplify the notation and omit writing the dependence on $\mathbb{E}^{\tilde{\mathbb{P}}}\left[\Pi\right]\in\left\{ x:x\in\mathbb{R}^{N}\backslash\bar{S}_{M}\wedge\left|\mathcal{I}^{e}\left(x\right)\right|=1\right\} $
explicitly. All the statements hold for any such $\mathbb{E}^{\tilde{\mathbb{P}}}\left[\Pi\right]$.

For the further argumentation, we introduce $R\left(\cdot\right)$
and $N\left(\cdot\right)$ that denote a range and a null space, respectively.

\begin{lemma}\label{prop:np}Let $x\in\mathbb{R}^{N}\backslash\left\{ 0\right\} $
and $c\in C$ be any of the consumers. Then $x^{\top}\mathcal{D}\tilde{\mathcal{Z}}_{c}x\leq0$.
Moreover, $x^{\top}\mathcal{D}\tilde{\mathcal{Z}}_{c}x=0$ if and
only if $x\in R\left(\hat{A}_{1}^{\top}\right)$. \end{lemma}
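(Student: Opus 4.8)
The plan is to compute $\mathcal{D}\tilde{\mathcal{Z}}_c$ in closed form and then read off its definiteness and null space. Working with the box-free consumer program of Lemma~\ref{prop:Unbound_vol_unbound_pr} (the bounds (\ref{eq:cb1}) are artificial and do not bind on the region of interest), the consumer solves
\[
\max_{V_c}\ -\mathbb{E}^{\tilde{\mathbb{P}}}\left[\Pi\right]^\top V_c - \frac{1}{2}\lambda_c V_c^\top \hat{Q}_1 V_c \quad\text{subject to}\quad \hat{A}_1 V_c = a_c,
\]
a purely equality-constrained quadratic program, so $\tilde{\mathcal{Z}}_c$ reduces to a single affine selection and $\mathcal{D}\tilde{\mathcal{Z}}_c$ is one constant matrix. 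Two structural facts are used throughout: $\hat{Q}_1 = Q_c \succ 0$ (a principal submatrix of $\hat{Q}\succ 0$, which holds by Assumption~\ref{ass: as3}), and $\hat{A}_1$ has full row rank because its blocks $1_j$ occupy disjoint coordinate sets in (\ref{eq:A1}). In particular $\hat{A}_1\hat{Q}_1^{-1}\hat{A}_1^\top$ is positive definite, hence invertible.

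First I would solve the KKT system. Stationarity reads $-\mathbb{E}^{\tilde{\mathbb{P}}}\left[\Pi\right] - \lambda_c\hat{Q}_1 V_c - \hat{A}_1^\top\mu_c = 0$, so that $V_c = -\lambda_c^{-1}\hat{Q}_1^{-1}(\mathbb{E}^{\tilde{\mathbb{P}}}\left[\Pi\right] + \hat{A}_1^\top\mu_c)$; substituting into $\hat{A}_1 V_c = a_c$ and inverting $\hat{A}_1\hat{Q}_1^{-1}\hat{A}_1^\top$ expresses $\mu_c$ as an affine function of $\mathbb{E}^{\tilde{\mathbb{P}}}\left[\Pi\right]$. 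Back-substituting and differentiating in $\mathbb{E}^{\tilde{\mathbb{P}}}\left[\Pi\right]$ gives
\[
\mathcal{D}\tilde{\mathcal{Z}}_c = -\frac{1}{\lambda_c}\left(\hat{Q}_1^{-1} - \hat{Q}_1^{-1}\hat{A}_1^\top\left(\hat{A}_1\hat{Q}_1^{-1}\hat{A}_1^\top\right)^{-1}\hat{A}_1\hat{Q}_1^{-1}\right) =: -\frac{1}{\lambda_c}M.
\]
Since $\lambda_c > 0$, the inequality $x^\top\mathcal{D}\tilde{\mathcal{Z}}_c x \le 0$ is equivalent to $x^\top M x \ge 0$, and the equality case to $x^\top M x = 0$.

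To analyze $M$ I would pass to the symmetric positive definite square root $\hat{Q}_1^{-1/2}$ and set $\tilde{A} := \hat{A}_1\hat{Q}_1^{-1/2}$, which yields $M = \hat{Q}_1^{-1/2} P \hat{Q}_1^{-1/2}$ with $P := I - \tilde{A}^\top(\tilde{A}\tilde{A}^\top)^{-1}\tilde{A}$. Here $P$ is the orthogonal projector onto $N(\tilde{A}) = R(\tilde{A}^\top)^\perp$, so it is symmetric, idempotent and positive semidefinite. Writing $y := \hat{Q}_1^{-1/2}x$ gives $x^\top M x = y^\top P y = \|Py\|^2 \ge 0$, which proves $x^\top\mathcal{D}\tilde{\mathcal{Z}}_c x \le 0$. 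Moreover $\|Py\|^2 = 0$ exactly when $y \in R(\tilde{A}^\top) = R(\hat{Q}_1^{-1/2}\hat{A}_1^\top)$, i.e. when $\hat{Q}_1^{-1/2}x = \hat{Q}_1^{-1/2}\hat{A}_1^\top z$ for some $z$; since $\hat{Q}_1^{-1/2}$ is invertible this is precisely $x \in R(\hat{A}_1^\top)$, giving the stated equivalence.

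The main obstacle is the equality case rather than the semidefiniteness: the direct computation only delivers $M \succeq 0$, and one must follow the null space carefully through the weighting by $\hat{Q}_1^{-1/2}$ to confirm that it collapses to exactly $R(\hat{A}_1^\top)$ and not to some larger $\hat{Q}_1$-weighted subspace. The positive definiteness of $\hat{Q}_1$ from Assumption~\ref{ass: as3} is what makes $\hat{Q}_1^{-1/2}$ a bijection and thus lets the degeneracy of $P$, which lives on $R(\tilde{A}^\top)$, be pulled back exactly onto $R(\hat{A}_1^\top)$.
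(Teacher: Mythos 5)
Your proposal is correct and follows essentially the same route as the paper: drop the non-binding box constraints by Lemma \ref{prop:bounded_volumes}, solve the equality-constrained KKT system to get the closed-form Jacobian, and conjugate by an invertible factor of $\hat{Q}_1^{-1}$ to expose a projection matrix whose null space pulls back exactly to $R\left(\hat{A}_{1}^{\top}\right)$. The only cosmetic difference is that you use the symmetric square root $\hat{Q}_1^{-1/2}$ where the paper uses a Cholesky factor; the argument is otherwise identical.
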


\begin{proof}Since the optimal power trading strategy $V_{c}$ can
not be at the boundary of the feasible region $S_{c}$ due to Lemma
\ref{prop:bounded_volumes}, the necessary and due to convexity and
the Slater condition in Assumption \ref{ass: as1} also sufficient
KKT conditions for $V_{c}$ to be a global minimizer given the forward
electricity price reads
\begin{equation}
\begin{array}{rcl}
-\mathbb{E}^{\tilde{\mathbb{P}}}\left[\Pi\right]-\lambda_{c}\hat{Q}_{1}V_{c}-\hat{A}_{1}^{\top}\mu_{c} & = & 0\\
\\
\hat{A}_{1}V_{c} & = & a_{c}.
\end{array}\label{eq:KKTpr}
\end{equation}
One can then solve for $V_{c}$ from the first equation of (\ref{eq:KKTpr})
by multiplying both sides from the left by $\left(\lambda_{c}\hat{Q}_{1}\right)^{-1}$
and obtain 
\begin{equation}
V_{c}=-\frac{1}{\lambda_{c}}\hat{Q}_{1}^{-1}\mathbb{E}^{\tilde{\mathbb{P}}}\left[\Pi\right]-\frac{1}{\lambda_{c}}\hat{Q}_{1}^{-1}\hat{A}_{1}^{\top}\mu_{c}.\label{eq:ml}
\end{equation}
By further multiplying both sides of (\ref{eq:ml}) from the left
by $\hat{A}_{1}$, we get 
\begin{equation}
\hat{A}_{1}V_{c}=-\frac{1}{\lambda_{c}}\hat{A}_{1}\hat{Q}_{1}^{-1}\mathbb{E}^{\tilde{\mathbb{P}}}\left[\Pi\right]-\frac{1}{\lambda_{c}}\hat{A}_{1}\hat{Q}_{1}^{-1}\hat{A}_{1}^{\top}\mu_{c}.
\end{equation}
Since $\hat{A}_{1}$ has full row rank $\left(\hat{A}_{1}\hat{Q}_{1}^{-1}\hat{A}_{1}^{\top}\right)^{-1}$
exists. By using the second equation of (\ref{eq:KKTpr}), we get
\begin{equation}
\mu_{c}=-\left(\hat{A}_{1}\hat{Q}_{1}^{-1}\hat{A}_{1}^{\top}\right)^{-1}\left(\lambda_{c}a_{c}+\hat{A}_{1}\hat{Q}_{1}^{-1}\mathbb{E}^{\tilde{\mathbb{P}}}\left[\Pi\right]\right).
\end{equation}
Inserting that back into (\ref{eq:ml}), we can calculate 
\begin{equation}
\frac{\partial V_{c}}{\partial\mathbb{E}^{\tilde{\mathbb{P}}}\left[\Pi\right]}=-\frac{1}{\lambda_{c}}\hat{Q}_{1}^{-1}+\frac{1}{\lambda_{c}}\hat{Q}_{1}^{-1}\hat{A}_{1}^{\top}\left(\hat{A}_{1}\hat{Q}_{1}^{-1}\hat{A}_{1}^{\top}\right)^{-1}\hat{A}_{1}\hat{Q}_{1}^{-1}.\label{eq:ml1-1}
\end{equation}
Since $\hat{Q}_{1}$ (and consequently also $\hat{Q}_{1}^{-1}$) is
positive definite and symmetric, we can use the Cholesky decomposition
to define a matrix $\tilde{Q_{1}}\in\mathbb{R}^{N\times N}$ such
that $\hat{Q}_{1}^{-1}=\tilde{Q}_{1}^{-1}\tilde{Q_{1}}^{-\top}$.
Since $\tilde{Q_{1}}$ is invertible, we can rewrite (\ref{eq:ml1-1})
as 
\begin{equation}
\tilde{P}:=-\lambda_{c}\tilde{Q}_{1}\frac{\partial V_{c}}{\partial\mathbb{E}^{\tilde{\mathbb{P}}}\left[\Pi\right]}\tilde{Q_{1}}^{\top}=I-\tilde{Q_{1}}^{-\top}\hat{A}_{1}^{\top}\left(\hat{A}_{1}\hat{Q}_{1}^{-1}\hat{A}_{1}^{\top}\right)^{-1}\hat{A}_{1}\tilde{Q_{1}}^{-1}.\label{eq:mtmt}
\end{equation}
It is trivial to check that $\tilde{P}=\tilde{P}^{2}$ (i.e. $\tilde{P}$
is idempotent) and symmetric. Thus, $\tilde{P}$ must be a projection
matrix. It is known that every projection matrix is positive semidefinite
and thus $\frac{\partial V_{c}}{\partial\mathbb{E}^{\tilde{\mathbb{P}}}\left[\Pi\right]}\preceq0$.
Moreover, for any $x\in\mathbb{R}^{N}\backslash\left\{ 0\right\} $,
$x^{\top}\tilde{P}x=0$ if and only if $x\in R\left(\tilde{Q_{1}}^{-\top}\hat{A}_{1}^{\top}\right)$.
Therefore, it follows from (\ref{eq:mtmt}) that $x^{\top}\frac{\partial V_{c}}{\partial\mathbb{E}^{\tilde{\mathbb{P}}}\left[\Pi\right]}x=0$
if and only if $x\in R\left(\hat{A}_{1}^{\top}\right)$.\end{proof}

\begin{lemma}Assume that $x^{\top}\mathcal{D}\tilde{\mathcal{Z}}_{p}x\leq0$
for all $p\in P$ and all $x\in\mathbb{R}^{N}\backslash\left\{ 0\right\} $.
Then $\mathcal{D}\tilde{\mathcal{Z}}\prec0$ if and only if $\sum_{p\in P}\hat{A}_{1}\mathcal{D}\tilde{\mathcal{Z}}_{p}\hat{A}_{1}^{\top}$
has a full rank. \end{lemma}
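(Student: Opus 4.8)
The plan is to reduce the negative definiteness of $\mathcal{D}\tilde{\mathcal{Z}}=\sum_{p\in P}\mathcal{D}\tilde{\mathcal{Z}}_{p}+\sum_{c\in C}\mathcal{D}\tilde{\mathcal{Z}}_{c}$ on all of $\mathbb{R}^{N}$ to the negative definiteness of the smaller matrix $M:=\sum_{p\in P}\hat{A}_{1}\mathcal{D}\tilde{\mathcal{Z}}_{p}\hat{A}_{1}^{\top}$, exploiting that the consumer blocks are degenerate in exactly the directions $R(\hat{A}_{1}^{\top})$. I would first record three structural facts. By the hypothesis each producer block $\mathcal{D}\tilde{\mathcal{Z}}_{p}$ is negative semidefinite, and by Lemma \ref{prop:np} each consumer block $\mathcal{D}\tilde{\mathcal{Z}}_{c}$ is negative semidefinite; hence $\mathcal{D}\tilde{\mathcal{Z}}\preceq0$ everywhere, so $\mathcal{D}\tilde{\mathcal{Z}}\prec0$ holds if and only if $x^{\top}\mathcal{D}\tilde{\mathcal{Z}}x=0$ forces $x=0$. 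Next, every block is symmetric: for consumers this is read off the explicit formula (\ref{eq:ml1-1}), and for producers it follows because $\mathbb{E}^{\tilde{\mathbb{P}}}\left[\Pi\right]$ enters the objective only through the linear term $-\mathbb{E}^{\tilde{\mathbb{P}}}\left[\Pi\right]^{\top}V_{p}$, so the envelope theorem gives $V_{p}=-\mathcal{D}_{\mathbb{E}^{\tilde{\mathbb{P}}}\left[\Pi\right]}\Phi_{p}$ and thus $\mathcal{D}\tilde{\mathcal{Z}}_{p}$ is minus a Hessian of the value function and symmetric. Finally, $\hat{A}_{1}$ has full row rank by (\ref{eq:A1}), so $\hat{A}_{1}^{\top}$ is injective and $x=\hat{A}_{1}^{\top}y$ determines $y$ uniquely.

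I would then characterize the null directions of the quadratic form. Since every summand of $x^{\top}\mathcal{D}\tilde{\mathcal{Z}}x=\sum_{p}x^{\top}\mathcal{D}\tilde{\mathcal{Z}}_{p}x+\sum_{c}x^{\top}\mathcal{D}\tilde{\mathcal{Z}}_{c}x$ is nonpositive, the total vanishes if and only if every term vanishes. Using $\left|C\right|\ge1$ and the equality case of Lemma \ref{prop:np}, the consumer terms vanish exactly when $x\in R(\hat{A}_{1}^{\top})$; conversely, for any $x=\hat{A}_{1}^{\top}y\in R(\hat{A}_{1}^{\top})$ the consumer terms vanish automatically and $\sum_{p}x^{\top}\mathcal{D}\tilde{\mathcal{Z}}_{p}x=y^{\top}My$. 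This yields the pivotal equivalence: there exists $x\ne0$ with $x^{\top}\mathcal{D}\tilde{\mathcal{Z}}x=0$ if and only if there exists $y\ne0$ with $y^{\top}My=0$. Moreover, restricting the hypothesis to directions of the form $\hat{A}_{1}^{\top}y$ shows $y^{\top}My=\sum_{p}(\hat{A}_{1}^{\top}y)^{\top}\mathcal{D}\tilde{\mathcal{Z}}_{p}(\hat{A}_{1}^{\top}y)\le0$, so $M$ is symmetric and negative semidefinite. Consequently $\mathcal{D}\tilde{\mathcal{Z}}\prec0$ if and only if $M\prec0$.

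To close, I would invoke the standard fact that a symmetric negative semidefinite matrix is negative definite precisely when it is nonsingular: writing $-M=L^{\top}L$, one has $y^{\top}My=0\iff Ly=0\iff My=0$, so $M\prec0$ exactly when $\ker M=\{0\}$, i.e. when $M$ has full rank. Combining this with the equivalence of the previous paragraph gives $\mathcal{D}\tilde{\mathcal{Z}}\prec0$ if and only if $\sum_{p\in P}\hat{A}_{1}\mathcal{D}\tilde{\mathcal{Z}}_{p}\hat{A}_{1}^{\top}$ has full rank, as claimed. I expect the one genuinely delicate point to be the symmetry of the producer blocks $\mathcal{D}\tilde{\mathcal{Z}}_{p}$: without it, full rank of $M$ would not guarantee definiteness of its symmetric part, and the last step of the argument would break down. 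Identifying $\mathcal{D}\tilde{\mathcal{Z}}_{p}$ with minus the Hessian of the producer's optimal value function (via the envelope theorem, as the parameter $\mathbb{E}^{\tilde{\mathbb{P}}}\left[\Pi\right]$ enters only the linear term and not the feasible set $S_{p}$) is therefore the step I would state most carefully.
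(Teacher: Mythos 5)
Your proof is correct and follows essentially the same route as the paper's: decompose the quadratic form $x^{\top}\mathcal{D}\tilde{\mathcal{Z}}x$ over players, use Lemma \ref{prop:np} to dispose of directions $x\notin R\bigl(\hat{A}_{1}^{\top}\bigr)$, and reduce the remaining directions $x=\hat{A}_{1}^{\top}y$ to the rank of $\sum_{p\in P}\hat{A}_{1}\mathcal{D}\tilde{\mathcal{Z}}_{p}\hat{A}_{1}^{\top}$. Your explicit justification of the symmetry of the producer blocks (via the envelope theorem) and of the step ``negative semidefinite plus full rank implies negative definite'' fills in details the paper leaves implicit, and you are right that symmetry is the one point on which the terse published argument genuinely depends.
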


\begin{proof}We can write 
\[
x^{\top}\mathcal{D}\tilde{\mathcal{Z}}x=\sum_{k\in P\cup C}x^{\top}\mathcal{D}\tilde{\mathcal{Z}}_{p}x.
\]
By Lemma \ref{prop:np} and the assumption that $x^{\top}\mathcal{D}\tilde{\mathcal{Z}}_{p}x\leq0$
for all $p\in P$ and all $x\in\mathbb{R}^{N}\backslash\left\{ 0\right\} $,
we can see that $x^{\top}\mathcal{D}\tilde{\mathcal{Z}}_{k}x\leq0$
for all $k\in P\cup C$. Thus, $x^{\top}\mathcal{D}\tilde{\mathcal{Z}}x<0$
if and only if for each $x\in\mathbb{R}^{N}\backslash\left\{ 0\right\} $
there exists at least one player $k'\in P\cup C$ such that $x^{\top}\mathcal{D}\tilde{\mathcal{Z}}_{k'}x<0$.
If $x\notin R\left(\hat{A}_{1}^{\top}\right)$ then $x^{\top}\mathcal{D}\tilde{\mathcal{Z}}_{k}x<0$
for all $k\in C$. On the other hand, if $x\in R\left(\hat{A}_{1}^{\top}\right)$
then $x^{\top}\mathcal{D}\tilde{\mathcal{Z}}_{k}x=0$ for all $k\in C$.
Thus, for all $x\in R\left(\hat{A}_{1}^{\top}\right)$, $\mathcal{D}\tilde{\mathcal{Z}}\prec0$
if and only if $\sum_{p\in P}\hat{A}_{1}\mathcal{D}\tilde{\mathcal{Z}}_{p}\hat{A}_{1}^{\top}$
has a full rank.\end{proof}

\begin{lemma}\label{prop:Inv}Let $A\in\mathbb{R}^{n\times n}$,
$B\in\mathbb{R}^{n\times m}$, and $C\in\mathbb{R}^{m\times m}$ be
real matrices such that $A^{-1}$ and $\left(C-BC^{-1}B^{\top}\right)$
exist. Then
\[
\left[\begin{array}{cc}
A^{\top} & B^{\top}\end{array}\right]\left[\begin{array}{cc}
A & B\\
B^{\top} & C
\end{array}\right]^{-1}=\left[\begin{array}{cc}
I & 0\end{array}\right]
\]
and consequently 
\[
\left[\begin{array}{cc}
A^{\top} & B^{\top}\end{array}\right]\left[\begin{array}{cc}
A & B\\
B^{\top} & C
\end{array}\right]^{-1}\left[\begin{array}{c}
A\\
B
\end{array}\right]=A.
\]
\end{lemma}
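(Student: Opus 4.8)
The plan is to establish both identities without ever forming $M^{-1}$ explicitly, relying only on the defining relation $MM^{-1}=I$ for $M:=\left[\begin{array}{cc}A & B\\ B^{\top} & C\end{array}\right]$ together with a reading-off of its block rows and columns. The two hypotheses—invertibility of $A$ and of the Schur complement of $A$ in $M$, namely $C-B^{\top}A^{-1}B$—enter only to guarantee, through the standard block-inversion formula, that $M^{-1}$ exists, so that both sides of the asserted equalities are well defined; I would dispose of this as a preliminary remark rather than compute the inverse.

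The crux is the observation that the two multipliers appearing in the statement are nothing but the top block-row and the first block-column of $M$. Indeed, straight from the block form,
\[
\left[\begin{array}{cc}I & 0\end{array}\right]M=\left[\begin{array}{cc}A & B\end{array}\right],\qquad M\left[\begin{array}{c}I\\ 0\end{array}\right]=\left[\begin{array}{c}A\\ B^{\top}\end{array}\right],
\]
and, after the evident transpositions (and using that $A$ is symmetric in the intended application), these coincide with the row and column written in the lemma. Granting the first of these, the opening identity follows by a single right-multiplication by $M^{-1}$ and the cancellation $MM^{-1}=I$:
\[
\left[\begin{array}{cc}A & B\end{array}\right]M^{-1}=\left[\begin{array}{cc}I & 0\end{array}\right]MM^{-1}=\left[\begin{array}{cc}I & 0\end{array}\right].
\]

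The second identity is then immediate. Right-multiplying the displayed equality by the first block-column of $M$ and invoking the block-column identity again gives
\[
\left[\begin{array}{cc}A & B\end{array}\right]M^{-1}\left[\begin{array}{c}A\\ B^{\top}\end{array}\right]=\left[\begin{array}{cc}I & 0\end{array}\right]\left[\begin{array}{c}A\\ B^{\top}\end{array}\right]=A,
\]
the last step being simply the selection of the top block $A$ from the first block-column. I anticipate no genuine difficulty here: the argument is pure block algebra, and the only points demanding care are the transpose and dimension bookkeeping that link the statement's multipliers to the block row and column of $M$, together with the reminder that the invertibility hypotheses are precisely what make $M^{-1}$—and hence the whole statement—meaningful. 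As an optional cross-check one could instead substitute the explicit Schur-complement expression for $M^{-1}$ and multiply out, but that route is longer and entirely mechanical, so I would prefer the cancellation argument above.
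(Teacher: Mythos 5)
Your proof is correct, but it takes a genuinely different route from the paper's. The paper invokes the block matrix inverse formula to write out
\[
\left[\begin{array}{cc} A & B\\ B^{\top} & C\end{array}\right]^{-1}=\left[\begin{array}{cc} A^{-1}+A^{-1}B\left(C-B^{\top}A^{-1}B\right)^{-1}B^{\top}A^{-1} & -A^{-1}B\left(C-B^{\top}A^{-1}B\right)^{-1}\\ -\left(C-B^{\top}A^{-1}B\right)^{-1}B^{\top}A^{-1} & \left(C-B^{\top}A^{-1}B\right)^{-1}\end{array}\right]
\]
explicitly and then lets the reader multiply out and watch the Schur-complement terms cancel. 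You avoid forming the inverse entirely: recognizing the multiplier as the first block row of $M$, you get $\left[\begin{array}{cc}A & B\end{array}\right]M^{-1}=\left[\begin{array}{cc}I & 0\end{array}\right]MM^{-1}=\left[\begin{array}{cc}I & 0\end{array}\right]$ in one line. Your cancellation argument is shorter, makes the mechanism transparent, and relegates the two invertibility hypotheses to their true role (guaranteeing via the block-inversion formula that $M^{-1}$ exists at all), whereas the paper's route requires carrying the explicit Schur-complement expression through a mechanical computation. You also handle more carefully than the paper a point worth flagging: as literally printed, the lemma's hypothesis $\left(C-BC^{-1}B^{\top}\right)$ and the multipliers $\left[\begin{array}{cc}A^{\top} & B^{\top}\end{array}\right]$ and $\left[\begin{array}{c}A\\ B\end{array}\right]$ are dimensionally inconsistent unless $n=m$; the intended reading (Schur complement $C-B^{\top}A^{-1}B$ invertible, multipliers equal to the first block row $\left[\begin{array}{cc}A & B\end{array}\right]$ and first block column of $M$) is exactly the one you adopt, and it is the one used in the application where $A$ is symmetric. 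Both proofs are sound; yours is the more economical.
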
 

\begin{proof}Since $A^{-1}$ and $\left(C-BC^{-1}B^{\top}\right)$
exist, we can use the block matrix inverse formula to calculate
\[
\left[\begin{array}{cc}
A & B\\
B^{\top} & C
\end{array}\right]^{-1}=\left[\begin{array}{cc}
A^{-1}+A^{-1}B\left(C-B^{\top}A^{-1}B\right)^{-1}B^{\top}A^{-1} & -A^{-1}B\left(C-B^{\top}A^{-1}B\right)^{-1}\\
-\left(C-B^{\top}A^{-1}B\right)^{-1}B^{\top}A^{-1} & \left(C-B^{\top}A^{-1}B\right)^{-1}
\end{array}\right].
\]
The result then follows trivially.\end{proof}

\begin{theorem}For all $x\in\mathbb{R}^{N}$ and all $p\in P$, $x^{\top}\mathcal{D}\tilde{\mathcal{Z}}_{p}x\leq0$
holds. Moreover, if for each delivery period $j\in J$ there exist
at least one power plant that has a strictly feasible optimal production,
then $\sum_{p\in P}\hat{A}_{1}\mathcal{D}\tilde{\mathcal{Z}}_{p}\hat{A}_{1}^{\top}$
has a full rank. \end{theorem}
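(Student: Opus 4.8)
The plan is to work, as the paragraph preceding the theorem permits, at a price $\mathbb{E}^{\tilde{\mathbb{P}}}\left[\Pi\right]$ with $\left|\mathcal{I}^{e}\right|=1$, where each $\tilde{\mathcal{Z}}_{p}$ is affine and $\mathcal{D}\tilde{\mathcal{Z}}_{p}=\partial V_{p}/\partial\mathbb{E}^{\tilde{\mathbb{P}}}\left[\Pi\right]$ exists. On such a piece the active set is fixed, and by Lemma \ref{prop:Optimal-fuel-trading} and Corollary \ref{prop:bounded_volumes} the bounds (\ref{eq:pb3}), (\ref{eq:pbb4}), (\ref{eq:pb2}) on $F_{p},O_{p},V_{p}$ are inactive, so the only active inequalities are the ramping/capacity bounds (\ref{eq:pb0})--(\ref{eq:pb1}) on $W_{p}$. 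Collecting the equality blocks (\ref{eq:Ap}) and the active $W$-bounds, written $\hat{B}_{p}=\left[0,0,\hat{B}_{p}^{W}\right]$, into a single matrix $\bar{A}_{p}$, the stationarity condition $\lambda_{p}Q_{p}v_{p}+\bar{A}_{p}^{\top}\bar{\mu}_{p}=-\mathbb{E}^{\tilde{\mathbb{P}}}\left[\pi_{p}\right]$ together with $\bar{A}_{p}v_{p}=\bar{a}_{p}$ holds identically in $\mathbb{E}^{\tilde{\mathbb{P}}}\left[\Pi\right]$ on the piece. Differentiating in a direction $x\in\mathbb{R}^{N}$ and writing $\partial(\cdot)$ for the derivatives gives $\lambda_{p}Q_{p}\,\partial v_{p}+\bar{A}_{p}^{\top}\partial\bar{\mu}_{p}=-\left[x;0\right]$ and $\bar{A}_{p}\,\partial v_{p}=0$, where $\left[x;0\right]$ carries $x$ in the $V_{p}$-block (the only place $\mathbb{E}^{\tilde{\mathbb{P}}}\left[\Pi\right]$ enters $\pi_{p}$) and zeros elsewhere.

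For the first claim I would left-multiply the differentiated stationarity by $\partial v_{p}^{\top}$. The term $\partial v_{p}^{\top}\bar{A}_{p}^{\top}\partial\bar{\mu}_{p}=(\bar{A}_{p}\partial v_{p})^{\top}\partial\bar{\mu}_{p}$ vanishes by feasibility, and $\partial v_{p}^{\top}\left[x;0\right]=x^{\top}\partial V_{p}=x^{\top}\mathcal{D}\tilde{\mathcal{Z}}_{p}x$, so $x^{\top}\mathcal{D}\tilde{\mathcal{Z}}_{p}x=-\lambda_{p}\,\partial v_{p}^{\top}Q_{p}\,\partial v_{p}$. Since $\lambda_{p}>0$ and $Q_{p}\succeq0$ this is $\leq0$, proving $x^{\top}\mathcal{D}\tilde{\mathcal{Z}}_{p}x\leq0$ for all $x$ and all $p$; this is just the monotonicity of the solution map of a convex quadratic program in disguise. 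The same identity records the equality case: because $Q_{p}$ is supported only on the $(V_{p},F_{p},O_{p})$-block with $\hat{Q}\succ0$ (Assumption \ref{ass: as3}, as in Lemma \ref{prop:uniq_vol}), $x^{\top}\mathcal{D}\tilde{\mathcal{Z}}_{p}x=0$ forces $\partial V_{p}=\partial F_{p}=\partial O_{p}=0$, and hence $Q_{p}\partial v_{p}=0$.

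For the rank claim I would show that the symmetric part of $\sum_{p}\hat{A}_{1}\mathcal{D}\tilde{\mathcal{Z}}_{p}\hat{A}_{1}^{\top}$ is negative definite, which yields full rank. Suppose not: take $y\neq0$ with $y^{\top}\big(\sum_{p}\hat{A}_{1}\mathcal{D}\tilde{\mathcal{Z}}_{p}\hat{A}_{1}^{\top}\big)y=0$ and set $x=\hat{A}_{1}^{\top}y$. Each summand $x^{\top}\mathcal{D}\tilde{\mathcal{Z}}_{p}x\leq0$ by the first claim, so all of them vanish; by the equality case $\partial V_{p}=\partial F_{p}=\partial O_{p}=0$ and $Q_{p}\partial v_{p}=0$ for every $p$. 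Substituting into the differentiated stationarity leaves $\bar{A}_{p}^{\top}\partial\bar{\mu}_{p}=-\left[x;0\right]$. Reading this block by block: the $V_{p}$-block gives $\hat{A}_{1}^{\top}\partial\mu_{p}^{(1)}=-\hat{A}_{1}^{\top}y$, whence $\partial\mu_{p}^{(1)}=-y$ since $\hat{A}_{1}$ has full row rank; the $(F_{p},O_{p})$-block gives $\hat{A}_{2}^{\top}\partial\mu_{p}^{(2)}=0$, whence $\partial\mu_{p}^{(2)}=0$ since $\hat{A}_{2}$ also has full row rank (by (\ref{eq:A2}) its fuel blocks are independent copies of $\hat{A}_{1}$ and the emission row hits the disjoint $O$-columns).

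Finally I would invoke the hypothesis. Fix any $j$ with $y_{j}\neq0$ and let $(l,r)$ be a strictly feasible plant at $T_{j}$, owned by some $p$; here ``strictly feasible'' must be read as: none of the bounds (\ref{eq:pb0})--(\ref{eq:pb1}) touching $W_{p,l,r}\left(T_{j}\right)$ --- the capacity bound at $j$ and the ramping links to $j\pm1$ --- is active, so the $(l,r,j)$-row of $\hat{B}_{p}^{W}$ is absent and the $\partial\eta^{W}_{p}$-contribution to that component is zero. The $W_{p}$-block of $\bar{A}_{p}^{\top}\partial\bar{\mu}_{p}=-\left[x;0\right]$ at component $(l,r,j)$ then reads $(\hat{A}_{3,p}^{\top}\partial\mu_{p}^{(1)})_{(l,r,j)}+(\hat{A}_{4,p}^{\top}\partial\mu_{p}^{(2)})_{(l,r,j)}=0$; since row $j$ of $\hat{A}_{3,p}$ is the all-ones vector over the $W_{\cdot,\cdot}\left(T_{j}\right)$ entries (by (\ref{eq:pb2-1})), we get $(\hat{A}_{3,p}^{\top}\partial\mu_{p}^{(1)})_{(l,r,j)}=-y_{j}$, while $\partial\mu_{p}^{(2)}=0$ kills the second term, forcing $y_{j}=0$ --- a contradiction. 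Hence every $y_{j}=0$, the symmetric part of the sum is negative definite, and the matrix has full rank. I expect the main obstacle to be bookkeeping rather than conceptual: correctly identifying the block structure of $A_{p}$ (the roles of $\hat{A}_{3,p},\hat{A}_{4,p}$ and the sign conventions in (\ref{eq:pb2-1})--(\ref{eq:pb4})), justifying that only the $W$-bounds are active so that the reduction is legitimate, and --- most delicately --- pinning down ``strictly feasible'' so that it genuinely removes the $\partial\eta^{W}_{p}$ term, which requires excluding the ramping constraints to adjacent periods and not merely the capacity bound. I would also remark that the argument never needs $\partial W_{p}$ or the multipliers to be unique: $V_{p},F_{p},O_{p}$ are unique by Lemma \ref{prop:uniq_vol}, and $\partial\mu_{p}^{(1)},\partial\mu_{p}^{(2)}$ are pinned down by the injectivity of $\hat{A}_{1}^{\top},\hat{A}_{2}^{\top}$, so the conclusion is independent of the chosen affine selection.
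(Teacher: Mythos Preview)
Your proof is correct and takes a genuinely different, and in fact more elementary, route than the paper's.

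The paper computes $\mathcal{D}\tilde{\mathcal{Z}}_{p}$ explicitly by solving the KKT system in stages: it eliminates $v_{p}'$, then $\mu_{p}'$, and finally represents $\partial v_{p}''/\partial\mathbb{E}^{\tilde{\mathbb{P}}}\left[\pi\right]$ through the generalized Bott--Duffin constrained inverse $(\check{A}_{p})_{S}^{(+)}$; semidefiniteness is then read off from the symmetry and positivity-preservation of that inverse, and the rank claim is reduced via a chain of range identities to $\text{rank}(\hat{A}_{3,p}P_{S})$. Along the way it invokes a block-matrix identity (its Lemma~\ref{prop:Inv}) and a somewhat informal ``w.l.o.g.'' that a single producer owns a strictly feasible plant in every period.

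Your argument bypasses all of this. Testing the differentiated stationarity against $\partial v_{p}$ gives the one-line identity $x^{\top}\mathcal{D}\tilde{\mathcal{Z}}_{p}x=-\lambda_{p}\,\partial v_{p}^{\top}Q_{p}\,\partial v_{p}$, which is exactly the monotonicity of the solution map of a convex QP and immediately yields both the sign and the equality case. For the rank part you argue by contradiction on the quadratic form, extract $\partial\mu_{p}^{(1)}=-y$ and $\partial\mu_{p}^{(2)}=0$ from the injectivity of $\hat{A}_{1}^{\top},\hat{A}_{2}^{\top}$, and then read off $y_{j}=0$ at the $(l,r,j)$ component of the $W$-block for whichever producer owns the strictly feasible plant at $T_{j}$. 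This handles the multi-producer case directly, without the paper's w.l.o.g., and never needs the Bott--Duffin machinery. What the paper's route buys is an explicit formula for the sensitivity matrix, potentially useful for computation; what your route buys is a shorter, more conceptual proof that makes the structure transparent. Your caveat about ``strictly feasible'' having to exclude the ramping links to $T_{j\pm1}$, not just the capacity bound at $T_{j}$, is exactly right and matches the paper's definition of upper/lower bound via constraints (\ref{eq:pb0})--(\ref{eq:pb1}).
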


\begin{proof}We can assume w.l.o.g that there exists a producer $p\in P$
who owns a set of power plants among which at least one has a strictly
feasible optimal production in each delivery periods $j\in J$. In
this case, showing that $\sum_{p\in P}\hat{A}_{1}\mathcal{D}\tilde{\mathcal{Z}}_{p}\hat{A}_{1}^{\top}$
has a full rank simplifies to showing that $\hat{A}_{1}\mathcal{D}\tilde{\mathcal{Z}}_{p}\hat{A}_{1}^{\top}$
has a full rank. 

Assume that the set of active inequality constraints $B_{p}$ is known.
Constraints (\ref{eq:pb3}) and (\ref{eq:pbb4}) are never active
due to Lemma \ref{prop:Optimal-fuel-trading}. Similarly, Constraints
(\ref{eq:pb2}) are never active due to Lemma \ref{prop:bounded_volumes}.
The KKT conditions can thus be written as
\begin{equation}
\begin{array}{rcl}
-\mathbb{E}^{\tilde{\mathbb{P}}}\left[\pi\right]-\lambda_{p}\hat{Q}v_{p}'-\hat{A}_{12}^{\top}\mu'_{p} & = & 0\\
\\
-\hat{A}_{p}^{\top}\mu'_{p}-B_{p}^{\top}\eta'_{p} & = & 0\\
\\
\hat{A}_{12}v_{p}'+\hat{A}_{p}v''_{p} & = & 0\\
\\
B_{p}v''_{p} & = & b
\end{array}\label{eq:KKTpr-1}
\end{equation}
where $v_{p}':=\left[V_{p}^{\top},F_{p}^{\top},O_{p}^{\top}\right]^{\top}$
and $v_{p}'':=W_{p}$. The internal structure of the matrices is the
following 
\begin{equation}
\hat{Q}=:\left[\begin{array}{cc}
\hat{Q}_{1} & \hat{Q}_{2}\\
\hat{Q}_{2}^{\top} & \hat{Q}_{3}
\end{array}\right],\quad\tilde{A}_{12}:=\left[\begin{array}{cc}
\hat{A}_{1} & 0\\
0 & \hat{A}_{2}
\end{array}\right],\quad\hat{A}_{p}:=\left[\begin{array}{c}
\hat{A}_{3,p}\\
\hat{A}_{4,p}
\end{array}\right].\label{eq:insStr}
\end{equation}
Since $\hat{Q}\succ0$, we can express $v_{p}'$ from the first equation
of (\ref{eq:KKTpr-1}) as
\begin{equation}
v_{p}'=-\frac{1}{\lambda_{p}}\hat{Q}^{-1}\mathbb{E}^{\tilde{\mathbb{P}}}\left[\pi\right]-\frac{1}{\lambda_{p}}\hat{Q}^{-1}\hat{A}_{12}^{\top}\mu'_{p}\label{eq:bl1}
\end{equation}
and further $\hat{A}_{12}v_{p}'$ as
\begin{equation}
\hat{A}_{12}v_{p}'=-\frac{1}{\lambda_{p}}\hat{A}_{12}\hat{Q}^{-1}\mathbb{E}^{\tilde{\mathbb{P}}}\left[\pi\right]-\frac{1}{\lambda_{p}}\hat{A}_{12}\hat{Q}^{-1}\hat{A}_{12}^{\top}\mu'_{p}.
\end{equation}
Using the third equation of (\ref{eq:KKTpr-1}) this reads 
\begin{equation}
\hat{A}_{p}v''_{p}=\frac{1}{\lambda_{p}}\hat{A}_{12}\hat{Q}^{-1}\mathbb{E}^{\tilde{\mathbb{P}}}\left[\pi\right]+\frac{1}{\lambda_{p}}\hat{A}_{12}\hat{Q}^{-1}\hat{A}_{12}^{\top}\mu'_{p}.
\end{equation}
Since $\hat{A}_{1}$ and $\hat{A}_{2}$ both have a full rank, also
$\hat{A}_{12}$ has a full rank and thus $\left(\hat{A}_{12}\hat{Q}^{-1}\hat{A}_{12}^{\top}\right)^{-1}$
exists. Then 
\begin{equation}
\mu'_{p}=\left(\hat{A}_{12}\hat{Q}^{-1}\hat{A}_{12}^{\top}\right)^{-1}\left(\lambda_{p}\hat{A}_{p}v''_{p}-\hat{A}_{12}\hat{Q}^{-1}\mathbb{E}^{\tilde{\mathbb{P}}}\left[\pi\right]\right).\label{eq:bl2}
\end{equation}
Multiplying both sides with $\hat{A}_{p}^{\top}$ from the left and
using the second equation of (\ref{eq:KKTpr-1}), we get 
\begin{equation}
B_{p}^{\top}\eta'_{p}=\hat{A}_{p}^{\top}\left(\hat{A}_{12}\hat{Q}^{-1}\hat{A}_{12}^{\top}\right)^{-1}\left(-\lambda_{p}\hat{A}_{p}v''_{p}+\hat{A}_{12}\hat{Q}^{-1}\mathbb{E}^{\tilde{\mathbb{P}}}\left[\pi\right]\right).\label{eq:nl}
\end{equation}
It is not possible to express $v''_{p}$ from (\ref{eq:nl}) because
$\text{rank}\left(\hat{A}_{p}^{\top}\left(\hat{A}_{12}\hat{Q}^{-1}\hat{A}_{12}^{\top}\right)^{-1}\hat{A}_{p}\right)\leq\left|J\right|\left(\left|L\right|+1\right)+1<\dim W_{p}$
and thus $\hat{A}_{p}^{\top}\left(\hat{A}_{12}\hat{Q}^{-1}\hat{A}_{12}^{\top}\right)^{-1}\hat{A}_{p}$
is not invertible. We can write (\ref{eq:nl}) and the last equation
of (\ref{eq:KKTpr-1}) as the following system of linear equations
\begin{equation}
\left[\begin{array}{cc}
\lambda_{p}\hat{A}_{p}^{\top}\left(\hat{A}_{12}\hat{Q}^{-1}\hat{A}_{12}^{\top}\right)^{-1}\hat{A}_{p} & B_{p}^{\top}\\
\\
B_{p} & 0
\end{array}\right]\left[\begin{array}{c}
v''_{p}\\
\\
\eta'_{p}
\end{array}\right]=\left[\begin{array}{c}
\hat{A}_{p}^{\top}\left(\hat{A}_{12}\hat{Q}^{-1}\hat{A}_{12}^{\top}\right)^{-1}\hat{A}_{12}\hat{Q}^{-1}\mathbb{E}^{\tilde{\mathbb{P}}}\left[\pi\right]\\
\\
b_{p}
\end{array}\right].\label{eq:lin}
\end{equation}

Before we attempt to solve (\ref{eq:lin}), let us try to evaluate
$\frac{\partial v_{p}'}{\partial\mathbb{E}^{\tilde{\mathbb{P}}}\left[\pi\right]}$
with the currently known facts. We get
\[
\begin{array}{rcl}
\frac{\partial v_{p}'}{\partial\mathbb{E}^{\tilde{\mathbb{P}}}\left[\pi\right]} & = & -\frac{1}{\lambda_{p}}\hat{Q}^{-1}-\frac{1}{\lambda_{p}}\hat{Q}^{-1}\hat{A}_{12}^{\top}\frac{\partial\mu'_{p}}{\partial\mathbb{E}^{\tilde{\mathbb{P}}}\left[\pi\right]}\\
\\
 & = & -\frac{1}{\lambda_{p}}\hat{Q}^{-1}+\frac{1}{\lambda_{p}}\hat{Q}^{-1}\hat{A}_{12}^{\top}\left(\hat{A}_{12}\hat{Q}^{-1}\hat{A}_{12}^{\top}\right)^{-1}\left(\hat{A}_{12}\hat{Q}^{-1}-\lambda_{p}\hat{A}_{p}\frac{\partial v''_{p}}{\partial\mathbb{E}^{\tilde{\mathbb{P}}}\left[\pi\right]}\right),
\end{array}
\]
where (\ref{eq:bl1}) and (\ref{eq:bl2}) were used to obtain the
first and the second equality, respectively. We are interested in
the rank of $\hat{A}_{1}\mathcal{D}\tilde{\mathcal{Z}}_{p}\hat{A}_{1}^{\top}$.
Define $\tilde{A}_{1}=\left[\hat{A}_{1}0\right]$, where $0$ is a
matrix of zeros of dimension $\left|J\right|\times\left(N\left|L\right|+1\right)$.
Then,

\begin{equation}
\begin{array}{rcl}
\hat{A}_{1}\mathcal{D}\tilde{\mathcal{Z}}_{p}\hat{A}_{1}^{\top} & = & \tilde{A}_{1}\frac{\partial v_{p}'}{\partial\mathbb{E}^{\tilde{\mathbb{P}}}\left[\pi\right]}\tilde{A}_{1}^{\top}\\
\\
 & = & -\frac{1}{\lambda_{p}}\tilde{A}_{1}\hat{Q}^{-1}\tilde{A}_{1}^{\top}+\frac{1}{\lambda_{p}}\tilde{A}_{1}\hat{Q}^{-1}\hat{A}_{12}^{\top}\left(\hat{A}_{12}\hat{Q}^{-1}\hat{A}_{12}^{\top}\right)^{-1}\hat{A}_{12}\hat{Q}^{-1}\tilde{A}_{1}^{\top}\\
\\
 &  & -\tilde{A}_{1}\hat{Q}^{-1}\hat{A}_{12}^{\top}\left(\hat{A}_{12}\hat{Q}^{-1}\hat{A}_{12}^{\top}\right)^{-1}\hat{A}_{p}\frac{\partial v''_{p}}{\partial\mathbb{E}^{\tilde{\mathbb{P}}}\left[\pi\right]}\tilde{A}_{1}^{\top}.
\end{array}\label{eq:mmm1-1}
\end{equation}
Using the internal structure of $\hat{Q}$ and $\hat{A}_{12}$ as
defined in (\ref{eq:insStr}), we evaluate 
\begin{equation}
\hat{A}_{12}\hat{Q}^{-1}\hat{A}_{12}^{\top}=\left[\begin{array}{cc}
\hat{A}_{1}\hat{Q}_{1}\hat{A}_{1}^{\top} & \hat{A}_{1}\hat{Q}_{2}\hat{A}_{2}^{\top}\\
\hat{A}_{2}\hat{Q}_{2}^{\top}\hat{A}_{1}^{\top} & \hat{A}_{2}\hat{Q}_{3}\hat{A}_{2}^{\top}
\end{array}\right]\label{eq:mn1}
\end{equation}
and similarly
\begin{equation}
\tilde{A}_{1}\hat{Q}^{-1}\hat{A}_{12}^{\top}=\left[\begin{array}{cc}
\hat{A}_{1}\hat{Q}_{1}\hat{A}_{1}^{\top} & \hat{A}_{1}\hat{Q}_{2}\hat{A}_{2}^{\top}\end{array}\right].\label{eq:mn2}
\end{equation}
Using Lemma \ref{prop:Inv}, (\ref{eq:mn1}), and (\ref{eq:mn2}),
we can rewrite (\ref{eq:mmm1-1}) as 
\begin{equation}
\hat{A}_{1}\mathcal{D}\tilde{\mathcal{Z}}_{p}\hat{A}_{1}^{\top}=-\tilde{A}_{1}\hat{Q}^{-1}\hat{A}_{12}^{\top}\left(\hat{A}_{12}\hat{Q}^{-1}\hat{A}_{12}^{\top}\right)^{-1}\hat{A}_{p}\frac{\partial v''_{p}}{\partial\mathbb{E}^{\tilde{\mathbb{P}}}\left[\pi\right]}\tilde{A}_{1}^{\top}.\label{eq:nb}
\end{equation}
In order to evaluate (\ref{eq:nb}), we return back to the system
of linear equations (\ref{eq:lin}). Using the generalized Bott-Duffin
constrained inverse (see \cite{yonglin1990thegeneralized}), we can
express $\frac{\partial v''_{p}}{\partial\mathbb{E}^{\tilde{\mathbb{P}}}\left[\pi\right]}$
from (\ref{eq:lin}) as 
\begin{equation}
\frac{\partial v''_{p}}{\partial\mathbb{E}^{\tilde{\mathbb{P}}}\left[\pi\right]}=\left(\check{A}_{p}\right)_{S}^{\left(+\right)}\hat{A}_{p}^{\top}\left(\hat{A}_{12}\hat{Q}^{-1}\hat{A}_{12}^{\top}\right)^{-1}\hat{A}_{12}\hat{Q}^{-1}+P_{N(\check{A}_{p})\cap S}\frac{\partial z_{p}}{\partial\mathbb{E}^{\tilde{\mathbb{P}}}\left[\pi\right]},\label{eq:mmm}
\end{equation}
where $\left(\check{A}_{p}\right)_{S}^{\left(+\right)}=P_{S}\left(\check{A}_{p}P_{S}+P_{S^{\perp}}\right)^{\left(+\right)}$,
$\check{A}_{p}:=\lambda_{p}\hat{A}_{p}^{\top}\left(\hat{A}_{12}Q_{P}^{-1}\hat{A}_{12}^{\top}\right)^{-1}\hat{A}_{p}$,
$S$ is a null space of $B_{p}$, i.e. $S=N\left(B_{p}\right)$, $P_{S}$
a projection on $S$, and $z_{p}\in\mathbb{R}^{\dim W_{p}}$ an arbitrary
vector. The Moore\textendash{}Penrose pseudoinverse is denoted by
$^{(+)}$. For every $x\in N(\check{A}_{p})$,
\begin{equation}
\begin{array}{rcl}
0 & = & \check{A}_{p}x\\
\\
 & = & \hat{A}_{p}^{\top}\left(\hat{A}_{12}\hat{Q}^{-1}A_{12}^{\top}\right)^{-1}\hat{A}_{p}x\\
\\
 & = & x^{\top}\hat{A}_{p}^{\top}\left(\hat{A}_{12}\hat{Q}^{-1}\hat{A}_{12}^{\top}\right)^{-1}\hat{A}_{p}x\\
\\
 & = & \tilde{Q}\hat{A}_{p}x\\
\\
 & = & \hat{A}_{p}x,
\end{array}
\end{equation}
where $\tilde{Q}\in\mathbb{R}^{\left|J\right|\left(\left|L\right|+1\right)+1\times\left|J\right|\left(\left|L\right|+1\right)+1}$
is defined by the Cholesky decomposition such that
\begin{equation}
\tilde{Q}\tilde{Q}^{\top}=\left(\hat{A}_{12}\hat{Q}^{-1}\hat{A}_{12}^{\top}\right)^{-1}.\label{eq:chol}
\end{equation}
 Since this holds for every $x\in N(\check{A}_{p})$, it must hold
also for every $x\in N(\check{A}_{p})\cap S$ and therefore
\begin{equation}
-\tilde{A}_{1}\hat{Q}^{-1}\hat{A}_{12}^{\top}\left(\hat{A}_{12}\hat{Q}^{-1}\hat{A}_{12}^{\top}\right)^{-1}\hat{A}_{p}P_{N(\check{A}_{p})\cap S}\frac{\partial z_{p}}{\partial\mathbb{E}^{\tilde{\mathbb{P}}}\left[\pi\right]}=0.\label{eq:1}
\end{equation}
Properties about the Bott-Duffin inverse (see \cite{yonglin1990thegeneralized})
show that if $\check{A}_{p}$ is symmetric, then also $\left(\check{A}_{p}\right)_{S}^{\left(+\right)}$
is symmetric. Moreover, if $\left(v''_{p}\right)^{\top}\check{A}_{p}v''_{p}\geq0$
for all $v''_{p}$ such that $B_{p}v''_{p}=b_{p}$, then also $\left(v''_{p}\right)^{\top}\left(\check{A}_{p}\right)_{S}^{\left(+\right)}v''_{p}\geq0$
for all $v''_{p}$. Thus, we can see from (\ref{eq:nb}), (\ref{eq:mmm}),
and (\ref{eq:1}) that $x^{\top}\mathcal{D}\tilde{\mathcal{Z}}_{p}x\leq0$,
$p\in P$ for all $x\in\mathbb{R}^{N}$, which proves the first part
of this theorem.

Using (\ref{eq:1}), (\ref{eq:mn1}) and (\ref{eq:mn2}), equation
(\ref{eq:nb}) reads
\begin{equation}
\hat{A}_{1}\mathcal{D}\tilde{\mathcal{Z}}_{p}\hat{A}_{1}^{\top}=\hat{A}_{3,p}\left(\check{A}_{p}\right)_{S}^{\left(+\right)}\hat{A}_{3,p}^{\top}.
\end{equation}
From \cite{yonglin1990thegeneralized} we know that $R\left(\left(\check{A}_{p}\right)_{S}^{\left(+\right)}\right)=R\left(P_{S}\check{A}_{p}\right)$.
Since $\left(\check{A}_{p}\right)_{S}^{\left(+\right)}$ is symmetric
and positive semidefinite there exists $U\in\mathbb{R}^{\dim W_{p}\times\dim W_{p}}$
such that $\left(\check{A}_{p}\right)_{S}^{\left(+\right)}=UU^{\top}$.
Then,
\begin{equation}
\begin{array}{rcl}
R\left(P_{S}\check{A}_{p}\right) & = & R\left(\left(\check{A}_{p}\right)_{S}^{\left(+\right)}\right)\\
\\
 & = & R\left(UU^{\top}\right)\\
\\
 & = & R(U)
\end{array}\label{eq:2}
\end{equation}
and 
\begin{equation}
\begin{array}{rcl}
\text{rank}\left(\hat{A}_{3,p}\left(\check{A}_{p}\right)_{S}^{\left(+\right)}\hat{A}_{3,p}^{\top}\right) & = & \text{rank}\left(\hat{A}_{3,p}U\right)\\
\\
 & = & \dim R\left(U\right)-\dim N\left(\hat{A}_{3,p}\right)\cap R\left(U\right).
\end{array}\label{eq:dim}
\end{equation}

The next step is to show that $R\left(P_{S}\check{A}_{p}\right)=R\left(P_{S}\hat{A}_{p}^{\top}\right)$.
We know that for any matrices $\tilde{A}$ and $\tilde{B}$ of the
same size $R\left(\tilde{A}\right)=R\left(\tilde{B}\right)$ if and
only if $N\left(\tilde{A}^{\top}\right)=N\left(\tilde{B}^{\top}\right)$.
Since $\left(\hat{A}_{12}\hat{Q}^{-1}\hat{A}_{12}^{\top}\right)^{-1}$
is positive definite and symmetric, there exists an invertible matrix
$\tilde{Q}$ as defined in (\ref{eq:chol}). Then for every $x\in N\left(\left(P_{S}\check{A}_{p}\right)^{\top}\right)$
\begin{equation}
\begin{array}{rcl}
0 & = & \hat{A}_{p}^{\top}\tilde{Q}\tilde{Q}^{\top}\hat{A}_{p}P_{S}x\\
\\
 & = & x^{\top}P_{S}\hat{A}_{p}^{\top}\tilde{Q}\tilde{Q}^{\top}\hat{A}_{p}P_{S}x\\
\\
 & = & \tilde{Q}^{\top}\hat{A}_{p}P_{S}x\\
\\
 & = & \hat{A}_{p}P_{S}x,
\end{array}\label{eq:3}
\end{equation}
and thus $R\left(P_{S}\check{A}_{p}\right)=R\left(P_{S}\hat{A}_{p}^{\top}\right)$.
It follows from (\ref{eq:2}), (\ref{eq:dim}) and (\ref{eq:3}) that
\begin{equation}
\begin{array}{rcl}
\text{rank}\left(\hat{A}_{3,p}\left(\check{A}_{p}\right)_{S}^{\left(+\right)}\hat{A}_{3,p}^{\top}\right) & = & \text{rank}\left(\hat{A}_{3,p}P_{S}\left[\begin{array}{c|c}
\hat{A}_{3,p}^{\top} & \hat{A}_{4,p}^{\top}\end{array}\right]\right)\\
\\
 & = & \text{rank}\left(\hat{A}_{3,p}P_{S}\right),
\end{array}
\end{equation}
where the last equality holds since $P_{S}$ is symmetric. 

A projection matrix $P_{S}$ on the subspace $S=N\left(B_{p}\right)$,
can be written as
\begin{equation}
P_{S}=I-B_{p}^{\top}\left(B_{p}B_{p}^{\top}\right)^{-1}B_{p}.
\end{equation}
By taking into account the structure of $B_{p}$ and $\hat{A}_{3,p}$
is easy to see that $\hat{A}_{3,p}P_{S}$ has full rank if and only
if for each delivery period $j\in J$ there exists at least one power
plant that has a strictly feasible optimal production (i.e. one of
the power plants does not appear in the set of active constraints
$B_{p}$). \end{proof}

It is possible to establish the following unique closed form relation
among the prices $\mathbb{E}^{\tilde{\mathbb{P}}}\left[\Pi\left(t_{i},T_{j}\right)\right]$,
$i\in\left\{ 1,2\right\} $ for any fixed $j\in J$. Since the claim
holds for any fixed $j\in J$ we simplify the notation and avoid writing
$T_{j}$.

\begin{proposition}\label{prop:twoStageMeanVar}If $\Pi\left(t_{1}\right)$
and $\mathbb{E}_{t_{1}}^{\mathbb{P}}\left[\Pi\left(t_{2}\right)\right]$
denote the two-stage equilibrium prices at $t_{1}$, then the following
equality 
\begin{equation}
\begin{array}{rcl}
\Pi\left(t_{1}\right) & = & \mathbb{E}_{t_{1}}^{\mathbb{\tilde{P}}}\left[\Pi\left(t_{2}\right)\right]-\lambda^{-1}\text{Cov}_{t_{1}}\left[\Pi\left(t_{2}\right),\sum_{p\in P}C_{p}\left[k_{p},F_{p}\left(t\right),G\left(t\right)\right]-\left(\sum_{c\in C}s_{c}p_{c}\right)D\right]\end{array}
\end{equation}
where $\lambda^{-1}=\sum_{p=1}^{P}\frac{1}{\lambda_{p}}+\sum_{c=1}^{C}\frac{1}{\lambda_{c}}$
and $k_{p}=V_{p}\left(t_{1}\right)+V_{p}\left(t_{2}\right)$, must
be satisfied. $C_{p}\left(V,F,G\right)$ denotes the costs of producing
$V$ units of electricity with the fuel trading strategy $F$ at fuel
prices $G$. \end{proposition}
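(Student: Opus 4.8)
The plan is to collapse the general model to the two-stage case $I_j=\{t_1,t_2\}$ with $t_2=T_j$, so that $\Pi(t_1)$ is the forward price fixed at the first date (hence $\mathcal{F}_{t_1}$-measurable) while $\Pi(t_2)$ is the delivery spot price, random as seen from $t_1$. I would then characterise each player's optimal first-stage position through its mean-variance first-order condition, aggregate these positions using the market-clearing identity (\ref{eq:vol}), and solve the resulting scalar equation for $\Pi(t_1)$. Throughout, the delivery spot price $\Pi(t_2)$ is treated as a random variable carrying the conditional moments $\mathbb{E}_{t_1}^{\tilde{\mathbb{P}}}[\Pi(t_2)]$, $\text{Var}_{t_1}[\Pi(t_2)]$ and $\text{Cov}_{t_1}[\Pi(t_2),\cdot]$ that it inherits from the equilibrium.

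First I would eliminate the second-stage volumes. For a producer $p$, the balance constraint (\ref{eq:pb2-1}) gives $V_p(t_1)+V_p(t_2)=k_p$ with $-k_p=\sum_{l\in L}\sum_{r\in R^{p,l}}W_{p,l,r}(T_j)$ the total output, while (\ref{eq:pb3-1}) and (\ref{eq:pb4}) tie the fuel and emission purchases to that output; collecting the emission and fuel expenditure into the single term $C_p[k_p,F_p(t),G(t)]$ and substituting $V_p(t_2)=k_p-V_p(t_1)$ writes the profit as $P_p=V_p(t_1)(\Pi(t_2)-\Pi(t_1))-k_p\Pi(t_2)-C_p$. For a consumer $c$, (\ref{eq:cb2}) gives $V_c(t_1)+V_c(t_2)=p_cD$ and hence $P_c=V_c(t_1)(\Pi(t_2)-\Pi(t_1))-p_cD(\Pi(t_2)-s_c)$. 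Because $\Pi(t_1)$ is $\mathcal{F}_{t_1}$-measurable it factors out of every conditional moment, so the only stochastic drivers left are $\Pi(t_2)$ and the fuel/emission prices entering $C_p$.

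Next I would form $\Psi_k=\mathbb{E}_{t_1}^{\tilde{\mathbb{P}}}[P_k]-\tfrac{\lambda_k}{2}\text{Var}_{t_1}[P_k]$ and differentiate in the free first-stage volume, the split of the fixed aggregate $k_p$ (respectively $p_cD$) between the two dates being unconstrained. By Lemma \ref{prop:uniq_vol} each objective is strictly concave in this variable, so the stationary point is the unique optimum. The first-order condition expresses each optimal forward position as an affine function of $\Pi(t_1)$: a speculative term proportional to $(\mathbb{E}_{t_1}^{\tilde{\mathbb{P}}}[\Pi(t_2)]-\Pi(t_1))/(\lambda_k\text{Var}_{t_1}[\Pi(t_2)])$ plus a hedging term built from $\text{Cov}_{t_1}[\Pi(t_2),C_p]$ for producers and from $p_cD$ for consumers. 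Summing over $p\in P$ and $c\in C$, inserting the clearing identity $\sum_cV_c(t_1)+\sum_pV_p(t_1)=0$ together with the aggregate feasibility relation $\sum_pk_p=-D$ (obtained by summing the producer and consumer balance constraints against market clearing), the speculative terms assemble the aggregate risk-tolerance factor $\lambda^{-1}=\sum_p\lambda_p^{-1}+\sum_c\lambda_c^{-1}$, the consumers' demand hedges sum to $D$ and annihilate $\sum_pk_p=-D$, and $\text{Var}_{t_1}[\Pi(t_2)]$ drops out of the clearing equation. What remains is exactly the covariance of $\Pi(t_2)$ with the aggregate net cost $\sum_pC_p[k_p,F_p(t),G(t)]-(\sum_cs_cp_c)D$, and solving for $\Pi(t_1)$ gives the stated identity.

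I expect the main difficulty to lie in two bookkeeping steps rather than in any deep inequality. The first is justifying that the nonlinear coupling between the fuel/emission decisions $F_p,O_p$, the production $W_p$ and the forward split can be summarised by a cost term that enters the first-order condition only through $\text{Cov}_{t_1}[\Pi(t_2),C_p]$: this needs an envelope/separability argument showing that, at the optimum, stationarity in $V_p(t_1)$ decouples from the optimisation over $F_p,O_p,W_p$, so that these may be frozen at their optimal values while differentiating in $V_p(t_1)$. The second is the cancellation itself, where the variance-of-spot contributions generated by the consumers' inelastic demand must be matched precisely against the output identity $\sum_pk_p=-D$; keeping the signs of the (short) producer and (long) consumer positions consistent throughout is the delicate part, since it is exactly this cancellation that isolates the clean covariance expression and fixes the coefficient $\lambda^{-1}$ and the overall sign in the final formula.
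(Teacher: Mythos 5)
Your proposal follows essentially the same route as the paper's proof: reduce to the two-stage case, observe via Lemma \ref{prop:bounded_volumes} that only the split of the fixed total position $k_p$ (resp.\ $p_c D$) between $t_1$ and $t_2$ is a free variable, write the mean-variance first-order condition for that split, aggregate over players through the market-clearing identity, and solve the resulting scalar equation for $\Pi\left(t_1\right)$. The only differences are cosmetic and equivalent --- you differentiate in $V_k\left(t_1\right)$ and clear the market at $t_1$ (so the producers' totals $\sum_p k_p=-D$ cancel against the consumers' $\sum_c p_c D = D$), whereas the paper differentiates in $V_k\left(t_2\right)$ and clears at $t_2$, where those terms never appear.
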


\begin{proof}Profit of a producer $p\in P$ can in this setting be
written as
\begin{equation}
\begin{array}{rcl}
P_{p}\left(V_{p}\left(t\right),F_{p}\left(t\right),\Pi\left(t\right)\right) & = & -\Pi\left(t_{1}\right)V_{p}\left(t_{1}\right)-\Pi\left(t_{2}\right)V_{p}\left(t_{2}\right)\\
\\
 &  & -C_{p}\left[V_{p}\left(t_{1}\right)+V_{p}\left(t_{2}\right),F_{p}\left(t\right),G\left(t\right)\right]
\end{array}\label{eq:prod}
\end{equation}
and profit of a consumer $c\in C$ as 
\[
\begin{array}{rcl}
P_{c}\left(V_{c}\left(t\right),\Pi\left(t\right)\right) & = & s_{c}p_{c}D\\
\\
 &  & -\Pi\left(t_{1}\right)V_{c}\left(t_{1}\right)-\Pi\left(t_{2}\right)V_{c}\left(t_{2}\right).
\end{array}
\]

Using Lemma \ref{prop:bounded_volumes}, we can see that for any producer
$p\in P$ there is no constraints on the small changes of $V_{p}\left(t_{1}\right)$
and $V_{p}\left(t_{2}\right)$, but there is a capacity constraint
on changes in the sum $V_{p}\left(t_{1}\right)+V_{p}\left(t_{2}\right)$.
Thus by setting $V_{p}\left(t_{1}\right)+V_{p}\left(t_{2}\right)=k_{p}$
for some fixed $k_{p}\in\mathbb{R}$ for all $p\in P$, constraints
can not be violated by small changes in volumes $V_{p}\left(t_{1}\right)$
or $V_{p}\left(t_{2}\right)$. By inserting (\ref{eq:prod}) into
a mean-variance objective function and taking a derivative, we obtain
the following expression 
\[
\begin{array}{rcl}
\frac{\partial\Psi_{p}\left(P_{p}\left(V_{p}\left(t\right),F_{p}\left(t\right),\Pi\left(t\right)\right)\right)}{\partial V_{p}\left(t_{2}\right)} & = & -\Pi\left(t_{1}\right)+\mathbb{E}_{t_{1}}^{\tilde{\mathbb{P}}}\left[\Pi\left(t_{2}\right)\right]\\
\\
 &  & +\lambda_{p}\left[\text{Var}_{t_{1}}\Pi\left(t_{2}\right)V_{p}\left(t_{2}\right)+\text{Cov}_{t_{1}}\left(\Pi\left(t_{2}\right),C_{p}\left[k_{p},F_{p}\left(t\right),G\left(t\right)\right]\right)\right].
\end{array}
\]
By setting $\frac{\partial\Psi_{p}\left(P_{p}\left(V_{p}\left(t\right),B_{p}\left(t\right),\Pi\left(t\right)\right)\right)}{\partial V_{p}\left(t_{2}\right)}=0$
and expressing $V_{p}\left(t_{2}\right)$, we obtain 
\begin{equation}
V_{p}\left(t_{2}\right)=\frac{\Pi\left(t_{1}\right)-\mathbb{E}_{t_{1}}^{\tilde{\mathbb{P}}}\left[\Pi\left(t_{2}\right)\right]}{\lambda_{p}\text{Var}_{t_{1}}\Pi\left(t_{2}\right)}+\frac{\text{Cov}_{t_{1}}\left[\Pi\left(t_{2}\right),C_{p}\left(k_{p},F_{p}\left(t\right),G\left(t\right)\right)\right]}{\text{Var}_{t_{1}}\Pi\left(t_{2}\right)}.\label{eq:vp}
\end{equation}
Similarly, for each consumer $c\in C$
\begin{equation}
V_{c}\left(t_{2}\right)=\frac{\Pi\left(t_{1}\right)-\mathbb{E}_{t_{1}}^{\tilde{\mathbb{P}}}\left[\Pi\left(t_{2}\right)\right]}{\lambda_{c}\text{Var}_{t_{1}}\Pi\left(t_{2}\right)}+\frac{\text{Cov}_{t_{1}}\left(D,\Pi\left(t_{2}\right)\right)}{\text{Var}_{t_{1}}\Pi\left(t_{2}\right)}.\label{eq:cp}
\end{equation}
The market condition requires
\begin{equation}
0=\sum_{c\in C}V_{c}\left(t_{2}\right)+\sum_{p\in P}V_{p}\left(t_{2}\right).\label{eq:mn}
\end{equation}
Inserting expressions (\ref{eq:vp}) and (\ref{eq:cp}) into (\ref{eq:mn}),
we obtain the result
\[
\begin{array}{rcl}
\Pi\left(t_{1}\right) & = & \mathbb{E}_{t_{1}}^{\tilde{\mathbb{P}}}\left[\Pi\left(t_{2}\right)\right]-\lambda^{-1}\text{Cov}_{t_{1}}\left[\Pi\left(t_{2}\right),\sum_{p\in P}C_{p}\left[k_{p},F_{p}\left(t\right),G\left(t\right)\right]-\left(\sum_{c\in C}s_{c}p_{c}\right)D\right]\end{array}
\]
where $\lambda^{-1}=\sum_{p=1}^{P}\frac{1}{\lambda_{p}}+\sum_{c=1}^{C}\frac{1}{\lambda_{c}}$.
\end{proof}

The analytical expression obtained in Proposition \ref{prop:twoStageMeanVar}
matches expression in \cite{bessembinder2002equilibrium}, \cite{buhler2009valuation},
and \cite{buhler2009riskpremia}, but extends results to more than
one producer and consumer. In contrast to the previous work, it also
allows capacity constraints.

\subsection{Mean Maximization}

In this subsection we would like to examine a mean maximization problem
instead of the mean-variance maximization, in the context of assumptions
and theorems presented in the previous subsection.

A mean maximization optimization problem for each producer $p\in P$
is defined as
\[
\begin{array}{rl}
\underset{v_{p}}{\max} & -\mathbb{E}^{\tilde{\mathbb{P}}}\left[\pi_{p}\right]^{\top}v_{p}\\
\\
\text{s.t.} & A_{p}v_{p}=a_{p}\\
\\
 & B_{p}v_{p}\leq b_{p}
\end{array}
\]
and a mean maximization optimization problem for each consumer $c\in C$
as
\[
\begin{array}{rl}
\underset{V_{c}}{\max} & -\mathbb{E}^{\tilde{\mathbb{P}}}\left[\Pi\right]^{\top}V_{c}\\
\\
\text{s.t.} & A_{c}V_{c}=a_{c}\\
\\
 & B_{c}V_{c}\leq b_{c}.
\end{array}
\]

Under Assumption \ref{ass: as1}, we use Corollary \ref{cor:exist}
to conclude that there exists a solution to the mean maximization
problem.

However, we are not able to claim the uniqueness of solution. Since
$\mathcal{D}_{v_{p}}\Psi_{p}\left(v_{p},\mathbb{E}^{\tilde{\mathbb{P}}}\left[\Pi\right]\right)=0$
for all $p\in P$ and $\mathcal{D}_{V_{c}}\Psi_{c}\left(V_{c},\mathbb{E}^{\tilde{\mathbb{P}}}\left[\Pi\right]^{\top}\right)=0$
for all $c\in C$, expected utility functions of producers and consumers
do not satisfy strict concavity. To get some insight into this problem
let us examine the most simple case where $\left|P\right|=1$, $\left|C\right|=1$,
$T'=1$, and $I_{1}=\left\{ 1\right\} $. We simplify the notation
by setting $V:=V_{p_{1}}$. The sensitivity analysis of the linear
programming shows that the optimal value $V^{*}\left(\mathbb{E}^{\tilde{\mathbb{P}}}\left[\Pi\right]\right)$
for a given price $\mathbb{E}^{\tilde{\mathbb{P}}}\left[\Pi\right]$,
is an increasing piecewise constant function. Since we are looking
for a price $\mathbb{E}^{\tilde{\mathbb{P}}}\left[\Pi\right]^{*}$
such that $V^{*}\left(\mathbb{E}^{\tilde{\mathbb{P}}}\left[\Pi\right]^{*}\right)=D\left(T_{1}\right)$,
one can distinguish two cases (see Figure \ref{fig:MeanEq}). In the
first case (graph on the left) the price equilibrium is not unique,
but the volume for a given price is. In the second case (graph on
the right) the price equilibrium is unique, but the volume for a given
price is not. In order to ensure uniqueness of both, Lemma \ref{prop:uniq_vol-1}
requires uniqueness of volumes for a given price (i.e. forbids that
the graph is vertical), and Theorem \ref{thm:uniq} (by setting $\mathcal{D}\tilde{\mathcal{Z}}\neq0$)
requires uniqueness of prices for a given volume (i.e. forbids that
the graph is horizontal). As we have seen in the previous subsection,
the first condition depends only on the expected utility function,
while the second condition depends on both, the expected utility function
and constraints.

\begin{figure}
\begin{centering}
{\small \begin{tikzpicture}[scale=1.00]
\draw[->] (0,0) -- (4,0) node[anchor=north] {$\mathbb{E}^{\mathbb{P}}\left[\Pi\right]$};
\draw[thick,dashed] (0.5,0) -- (0.5,1.0)  -- (1.5, 1.0) -- (1.5,1.5) -- (2.5, 1.5) -- (2.5, 2.5) -- (3.5, 2.5);
\draw (3.3,2.7) node {$Prod$};

\draw[thick,dotted] (0, 1.5) -- (3.5,1.5);
\draw (3.3,1.65) node {$Con$};

\draw[red,very thick] (1.5, 1.5) -- (2.5,1.5);

\draw[->] (0,0) -- (0,3) node[anchor=east] {$V$};

\begin{scope}[xshift=6.5cm]
\draw[->] (0,0) -- (4,0) node[anchor=north] {$\mathbb{E}^{\mathbb{P}}\left[\Pi\right]$};
\draw[thick,dashed] (0.5,0) -- (0.5,1.0)  -- (1.5, 1.0) -- (1.5,1.5) -- (2.5, 1.5) -- (2.5, 2.5) -- (3.5, 2.5);
\draw (3.3,2.7) node {$Prod$};

\draw[thick,dotted] (0, 1.8) -- (3.5,1.8);
\draw (3.3,1.95) node {$Con$};

\draw[red,very thick,fill] (2.5, 1.8) circle (0.03cm);

\draw[->] (0,0) -- (0,3) node[anchor=east] {$V$};
\end{scope}

\end{tikzpicture}}
\par\end{centering}{\small \par}

\begin{centering}

\par\end{centering}

\centering{}\caption{\label{fig:MeanEq}Equilibrium types for the mean maximization problems.}
\end{figure}
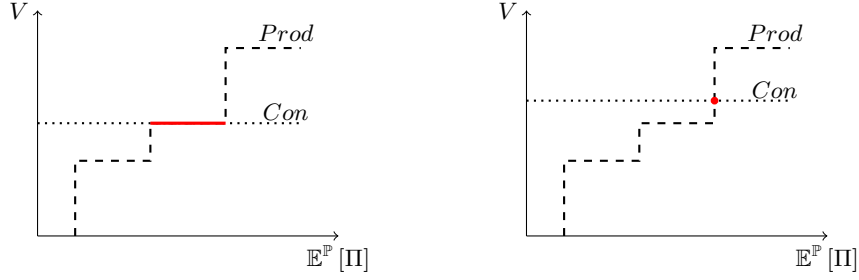

Even though the uniqueness of solution can not be claimed in this
setting, it is still possible to establish the following unique relation
among the prices $\Pi\left(t_{i},T_{j}\right)$, $i\in I_{j}$ for
any fixed $j\in J$. 

\begin{proposition}\label{prop:mean_bounded_vol}Electricity prices
$\mathbb{E}^{\tilde{\mathbb{P}}}\left[\Pi\left(t_{i},T_{j}\right)\right]$
that constitute a CE are equal for all $i\in I_{j}$ and any fixed
$j\in J$. \end{proposition}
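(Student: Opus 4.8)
The plan is to argue by contradiction, exploiting the fact that in the mean maximization setting each player's objective is linear in their trading volumes, so that any discrepancy in the expected prices across trading times within a single delivery period opens a riskless arbitrage that every player exploits up to the trading bound $V_{trade}$, which is incompatible with the market clearing condition (\ref{eq:vol}).

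First I would fix $j\in J$ and suppose, for contradiction, that the expected prices $\mathbb{E}^{\tilde{\mathbb{P}}}[\Pi(t_i,T_j)]$, $i\in I_j$, are not all equal (the case $|I_j|=1$ being vacuous). Let $m:=\min_{i\in I_j}\mathbb{E}^{\tilde{\mathbb{P}}}[\Pi(t_i,T_j)]$ and $I^{\min}:=\{\,i\in I_j:\mathbb{E}^{\tilde{\mathbb{P}}}[\Pi(t_i,T_j)]=m\,\}$, so that $\emptyset\neq I^{\min}\subsetneq I_j$. I would then isolate, for each player $k\in P\cup C$, the subproblem governing only the electricity volumes $(V_k(t_i,T_j))_{i\in I_j}$ of this one delivery period. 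For a consumer these variables occur solely in the linear objective, in the balance (\ref{eq:cb2}) and in the box bound (\ref{eq:cb1}); for a producer, at any optimum the production levels may be frozen, after which the variables $(V_p(t_i,T_j))_{i\in I_j}$ occur only in the objective, in the single balance (\ref{eq:pb2-1}) and in the box bound (\ref{eq:pb2}). Hence in both cases the subproblem is the linear program of maximizing $-e^{-\hat r T_j}\sum_{i\in I_j}\mathbb{E}^{\tilde{\mathbb{P}}}[\Pi(t_i,T_j)]V_k(t_i,T_j)$ subject to the single equality $\sum_{i\in I_j}V_k(t_i,T_j)=S_k$ and $-V_{trade}\le V_k(t_i,T_j)\le V_{trade}$, where $S_k=p_c D(T_j)\ge 0$ for a consumer and $S_k=-\sum_{l,r}W_{p,l,r}(T_j)\le 0$ for a producer, with $|S_k|<V_{trade}$ for $V_{trade}$ chosen large.

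Because the objective coefficients on the cheapest times $i\in I^{\min}$ strictly exceed those on the remaining times, the greedy optimum of this LP allocates the largest feasible net position to the cheapest times, namely $\sum_{i\in I^{\min}}V_k(t_i,T_j)=\min\bigl(|I^{\min}|V_{trade},\,S_k+(|I_j|-|I^{\min}|)V_{trade}\bigr)$, which is strictly positive since $|I_j|-|I^{\min}|\ge 1$ and $V_{trade}>|S_k|$. Summing this strict inequality over all $k\in P\cup C$ yields $\sum_{i\in I^{\min}}\sum_k V_k(t_i,T_j)>0$, whereas the competitive equilibrium market clearing condition (\ref{eq:vol}) forces $\sum_k V_k(t_i,T_j)=0$ for every $i$, so the same double sum equals $0$ — a contradiction. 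Consequently the prices must coincide for all $i\in I_j$, and since $j$ was arbitrary the claim follows.

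The main obstacle is the careful verification of the greedy LP optimum in the presence of ties at the minimum, which is why the conclusion is phrased as a sum over all of $I^{\min}$ rather than over a single cheapest time; it is precisely the strict positivity of \emph{each} player's aggregate position on the cheapest times — valid for consumers because they are net buyers, and for producers because, although net sellers overall, they sell where the price is highest and therefore buy back at the cheapest time — that survives aggregation against (\ref{eq:vol}). A secondary point requiring explicit justification is the reduction that freezes the optimal production of each producer, leaving a per-period linear program in the electricity volumes that decouples from the fuel, emission and production decisions; I would also note that Assumption \ref{ass: as1} guarantees each player's subproblem is feasible so that these optima are attained.
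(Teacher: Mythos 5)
Your proof is correct, and it reaches the contradiction by a genuinely different route than the paper's. The paper argues pairwise: for two trading times with $\mathbb{E}^{\tilde{\mathbb{P}}}\left[\Pi\left(t_{i'},T_{j}\right)\right]<\mathbb{E}^{\tilde{\mathbb{P}}}\left[\Pi\left(t_{i''},T_{j}\right)\right]$ it first deduces $V_{k}\left(t_{i'},T_{j}\right)\geq V_{k}\left(t_{i''},T_{j}\right)$ for every player $k$, combines this with the market clearing identity (\ref{eq:vol}) at both times to force $V_{k}\left(t_{i'},T_{j}\right)=V_{k}\left(t_{i''},T_{j}\right)$, then notes that each player would strictly improve by shifting volume toward the cheaper time unless $\left|V_{k}\left(t_{i'},T_{j}\right)\right|=V_{trade}$, and finally dismisses that case as incompatible with (\ref{eq:cb2}). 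You instead work with the whole argmin set $I^{\min}$, compute the exact mass $\min\left(\left|I^{\min}\right|V_{trade},\,S_{k}+\left(\left|I_{j}\right|-\left|I^{\min}\right|\right)V_{trade}\right)$ that every player's per-period LP must place on the cheapest times at any optimum, observe that it is strictly positive, and sum directly against (\ref{eq:vol}). Both arguments rest on the same exchange principle for a linear objective under a single fixed-sum constraint with box bounds, and both implicitly require $V_{trade}$ to exceed $\left|S_{k}\right|$ (demand, respectively installed capacity), which is the paper's standing convention around (\ref{eq:pb2}), (\ref{eq:cb1}) and Corollary \ref{prop:bounded_volumes}. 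What your version buys is tightness at exactly the point where the paper is loosest: concluding a violation of (\ref{eq:cb2}) from $\left|V_{k}\left(t_{i'},T_{j}\right)\right|=V_{trade}$ alone is not airtight when $\left|I_{j}\right|>2$, since the components at the remaining trading times could in principle compensate, whereas your contradiction $0=\sum_{i\in I^{\min}}\sum_{k}V_{k}\left(t_{i},T_{j}\right)>0$ is immediate; your formulation also absorbs ties at the minimum in one pass rather than requiring a pairwise sweep. The two steps you flag for expansion, namely that the period-$j$ electricity volumes of a producer must solve the frozen LP at any global optimum (they appear in no constraint other than (\ref{eq:pb2-1}) and (\ref{eq:pb2}), and the mean objective is separable), and the exchange argument showing that every optimum of that LP saturates the cheapest times, are both routine and close without difficulty.
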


\begin{proof}Let there exist $j\in J$, $i'\in I_{j}$, $i''\in I_{j}$,
$i'\neq i''$ such that $\mathbb{E}^{\tilde{\mathbb{P}}}\left[\Pi\left(t_{i'},T_{j}\right)\right]<\mathbb{E}^{\tilde{\mathbb{P}}}\left[\Pi\left(t_{i''},T_{j}\right)\right]$.
Then for each producer $p\in P$, $V_{p}\left(t_{i'},T_{j}\right)\geq V_{p}\left(t_{i''},T_{j}\right)$,
because otherwise they could improve their objective functions by
exchanging $V_{p}\left(t_{i'},T_{j}\right)$ and $V_{p}\left(t_{i''},T_{j}\right)$.
The same also holds for all consumers $c\in C$. Note that (\ref{eq:vol})
implies
\[
0=\sum_{p\in P}V_{p}\left(t_{i'},T_{j}\right)+\sum_{c\in C}V_{c}\left(t_{i'},T_{j}\right)
\]
and 
\[
0=\sum_{p\in P}V_{p}\left(t_{i''},T_{j}\right)+\sum_{c\in C}V_{c}\left(t_{i''},T_{j}\right).
\]
Therefore, $V_{p}\left(t_{i'},T_{j}\right)=V_{p}\left(t_{i''},T_{j}\right)$
for all $p\in P$ and $V_{c}\left(t_{i'},T_{j}\right)=V_{c}\left(t_{i''},T_{j}\right)$
for all $c\in C$.

If $\left|V_{p}\left(t_{i'},T_{j}\right)\right|<V_{trade}$ then this
can not be a CE, because producer $p$ could improve its objective
function by decreasing $V_{p}\left(t_{i'},T_{j}\right)$ for some
$\epsilon>0$ and increasing $V_{p}\left(t_{i''},T_{j}\right)$ for
the same $\epsilon>0$ without changing any other decision variables
or violating any constants. The same reasoning also holds for each
consumer $c\in C$. Thus, $\left|V_{p}\left(t_{i'},T_{j}\right)\right|=V_{trade}$
for all $p\in P$ and $\left|V_{c}\left(t_{i'},T_{j}\right)\right|=V_{trade}$
for all $c\in C$. Such solution clearly does not satisfy (\ref{eq:cb2}).
Thus, $\mathbb{E}^{\tilde{\mathbb{P}}}\left[\Pi\left(t_{i'},T_{j}\right)\right]\geq\mathbb{E}^{\tilde{\mathbb{P}}}\left[\Pi\left(t_{i''},T_{j}\right)\right]$.
One can than apply a similar reasoning to $\mathbb{E}^{\tilde{\mathbb{P}}}\left[\Pi\left(t_{i'},T_{j}\right)\right]>\mathbb{E}^{\tilde{\mathbb{P}}}\left[\Pi\left(t_{i''},T_{j}\right)\right]$
and conclude $\mathbb{E}^{\tilde{\mathbb{P}}}\left[\Pi\left(t_{i'},T_{j}\right)\right]=\mathbb{E}^{\tilde{\mathbb{P}}}\left[\Pi\left(t_{i''},T_{j}\right)\right]$.
\end{proof}

\section{Conclusions\label{sec:Conclusions}}

In this paper we propose a new model for modeling the electricity
price and its relation to other fuels and emission certificates. The
model belongs to a class of game theoretic equilibrium models. In
this paper we rigorously show that there exists a solution to the
proposed model and develop the conditions under which the solution
is also unique. In the last part of the paper we show why the uniqueness
of solution can not be claimed for a mean maximization problem.

\bibliographystyle{Bibtex/siam}
\bibliography{Bibtex/mat_fin_bib,Bibtex/transfer}

\end{document}